\documentclass[reqno,12pt]{amsart}

\makeatletter

\newlength{\myabovedisplayskip}
\newlength{\mybelowdisplayskip}
\newlength{\myabovedisplayshortskip}
\newlength{\mybelowdisplayshortskip}

\def\setdisplayskips{%
	\abovedisplayskip=\myabovedisplayskip
	\belowdisplayskip=\mybelowdisplayskip
	\abovedisplayshortskip=\myabovedisplayshortskip
	\belowdisplayshortskip=\mybelowdisplayshortskip
}

\usepackage{etoolbox}

\newcommand{\patchmathenv}[1]{%
	\AtBeginEnvironment{#1}{\setdisplayskips}%
}

\patchmathenv{equation}
\patchmathenv{equation*}
\patchmathenv{align}
\patchmathenv{align*}
\patchmathenv{gather}
\patchmathenv{gather*}
\patchmathenv{multline}
\patchmathenv{multline*}
\patchmathenv{flalign}
\patchmathenv{flalign*}
\patchmathenv{alignat}
\patchmathenv{alignat*}

\let\orig@itemize\itemize
\let\orig@enditemize\enditemize

\renewenvironment{itemize}{%
	\orig@itemize
	\setlength{\topsep}{10pt plus 5pt minus 5pt}
	\setlength{\partopsep}{0pt}
	\setlength{\parsep}{5pt plus 2.5pt minus 1pt}
	\setlength{\itemsep}{12pt plus 1pt minus 1pt}
	\setlength{\leftmargin}{25pt}
	\setlength{\labelwidth}{20pt}
	\setlength{\labelsep}{5pt}
}{%
	\orig@enditemize
}

\makeatother

\usepackage{mathtools}
\usepackage{amssymb}
\usepackage{bm}
\usepackage{amsthm}
\usepackage{cite}
\usepackage[breaklinks=true]{hyperref}
\hypersetup{colorlinks=true, linkcolor=blue!90!black, citecolor=orange!80!black, urlcolor=blue!90!black}

\usepackage{tikz, float, ifthen}
\usetikzlibrary{decorations.markings, arrows.meta}
\tikzset{->-/.style={line width=0.8pt,decoration={
			markings,
			mark=at position 0.56 with {\arrow{Triangle[length=1.7mm,width=1.3mm]}}},postaction={decorate}}}

\numberwithin{equation}{section}

\theoremstyle{definition}
\newtheorem{remark}{Remark}
\newtheorem{notation}{Notation}

\theoremstyle{plain}
\newtheorem{corollary}{Corollary}
\newtheorem{proposition}{Proposition}
\newtheorem{lemma}{Lemma}
\newtheorem{theorem}{Theorem}

\renewcommand{\epsilon}{\varepsilon}
\renewcommand{\phi}{\varphi}
\renewcommand{\imath}{\mathrm{i}}
\DeclareMathOperator{\sh}{sh}
\DeclareMathOperator{\ch}{ch}
\DeclareMathOperator{\sign}{sign}
\let\Re\relax
\let\Im\relax
\DeclareMathOperator{\Re}{Re}
\DeclareMathOperator{\Im}{Im}
\DeclareMathOperator{\Ln}{Ln}

\newcommand{\Id}{\mathcal{I}}
\newcommand{\Jd}{\mathcal{J}}
\newcommand{\Idc}{\mathcal{I}_c}
\newcommand{\Jdc}{\mathcal{J}_c}
\newcommand{\Gd}{\mathcal{G}}
\newcommand{\Gdc}{\mathcal{G}_c}
\newcommand{\dm}{\delta_{\mathrm{max}}}

\begin{document}

\title[From hyperbolic to complex Euler integrals]
{From hyperbolic to complex Euler integrals}

\author{N. \,M. Belousov}

\address{N. B.: Beijing Institute of Mathematical Sciences and Applications,
Huairou district, Beijing, 101408, China}

\author{G. \,A. Sarkissian}

\address{G. S.: Laboratory of Theoretical Physics,
JINR, Dubna, Moscow region, 141980 Russia and
Yerevan Physics Institute, Alikhanian Br. 2, 0036\, Yerevan, Armenia}

\author{V. \,P. Spiridonov}

\address{V. S.: Laboratory of Theoretical Physics,
JINR, Dubna, Moscow region, 141980 Russia and
National Research University Higher School of Economics, Moscow, Russia}

\begin{abstract} \noindent
Hyperbolic hypergeometric integrals are defined as Barnes-type integrals of products of hyperbolic gamma functions. Their reduction to ordinary hypergeometric functions is well known. We study in detail their degeneration to complex hypergeometric functions. Namely, using uniform bounds on the integrands, we prove that the univariate hyperbolic beta integral and the conical function degenerate to two-dimensional integrals over the complex plane.
\end{abstract}

\maketitle

\tableofcontents

\newpage

\section{Introduction}

The Euler beta integral
\begin{align} \label{Euler-beta}
	\int_0^1 t^{a - 1} (1 - t)^{b - 1} \, dt= \frac{\Gamma(a) \Gamma(b)}{\Gamma(a + b)}, \qquad \Re a, \Re b > 0,
\end{align}
is one of the simplest integrals that can be expressed in terms of the gamma function $\Gamma(x)$.
The gamma function admits various generalizations depending on additional parameters, see Figure~\ref{fig:gamma}. Correspondingly, there are variants of the beta integral, and more generally, of hypergeometric functions, for each type of the gamma function \cite{essays}.

Various reductions of elliptic hypergeometric functions were considered in ~\cite{BRS, Ra, SS}.
Most of them were rigorously proved using the uniform bounds established in \cite{Ra}.
The main goal of the present paper is to prove new limiting relations between hyperbolic
and complex rational Euler integrals, which require more refined uniform estimates. 

\begin{figure}[h]\centering
	
	\begin{tikzpicture}
		\def\h{1}
		\def\l{3}
		\def\a{0.6}
		\def\b{1}
		\def\s{1.3}
		
		\draw[->] (-0.1, 0) node[above] {$\;\;$elliptic} -- (-\l, -\h) node[yshift = -0.6cm] {trigonometric};
		\draw[ ->] (0.1, 0) -- (\l, -\h) node[yshift = -0.6cm] {hyperbolic};
		\draw[dash pattern=on 6pt off 4pt, ->] (-0.4*\l, -\h - \a) -- (0.4*\l, -\h - \a);
		
		\draw[->] (-\l, -\h - \b) -- (-\l, -2*\h - \b) node[yshift = -0.6cm] {rational (Euler)};
		\draw[->] (\l - \s, -\h - \b) -- (-\l + \s, -2*\h - \b);
		\draw[->] (\l, -\h - \b) -- (\l, -2*\h - \b) node[yshift = -0.6cm] {complex rational};
		\draw[dash pattern=on 6pt off 4pt, ->] (-0.35*\l, -2*\h - \a - \b) -- (0.35*\l, -2*\h - \a - \b);
		
	\end{tikzpicture}
	
	\caption{Types of gamma functions: solid arrows represent limits, dashed arrows correspond to algebraic connections} \label{fig:gamma}
\end{figure}
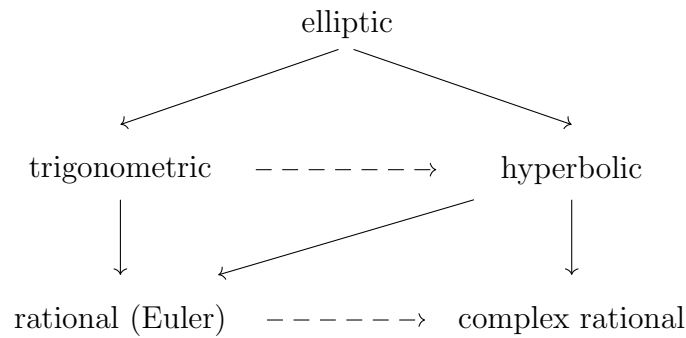

In Section~\ref{sec:gammas} we recall well-known properties of the complex rational and hyperbolic gamma functions, as well as the corresponding beta integrals. Next, in Section~\ref{sec:beta-lim} we describe the limiting procedure that relates these beta integrals. In Section~\ref{sec:conic} we show that the prescribed procedure also works for the Euler-type representation of the complex conical function. In Section~\ref{sec:hypergeometry} we outline how the same technique can be applied to the hyperbolic hypergeometric function of Ruijsenaars. In Section~\ref{sec:concl} we indicate several possible directions for further study. Finally, in the Appendices we prove some uniform bounds for the gamma functions and provide related technical statements.

\section{Properties of the gamma functions} \label{sec:gammas}

\subsection{Complex rational case} \label{sec:comp-gamma}

For $z \in \mathbb{C}$ and a pair $a, a' \in \mathbb{C}$ denote
\begin{align}
	[z]^a = z^a \bar{z}^{a'} = |z|^{a + a'} e^{\imath (a - a') \arg z}, \qquad \int_{\mathbb{C}} d^2z = \int_{\mathbb{R}^2} d\Re z \, d \Im z.
\end{align}
The double power $[z]^a$ is a single-valued function of $z$ if $a - a' \in \mathbb{Z}$. In all formulas where $a$ is an explicit integer we set $a' = a$, for example $[z]^1 = |z|^2$.

For a pair $a, a' \in \mathbb{C}$ such that $a - a' \in \mathbb{Z}$ the gamma function associated with the complex field is given by
\begin{align} \label{cgamma-def}
	\bm{\Gamma}(a|a') = \frac{1}{\pi} \int_{\mathbb{C}} [z]^{a - 1} e^{\bar{z} - z} \, d^2z= \frac{\Gamma(a)}{\Gamma(1 - a')},
\end{align}
see~\cite[Section 1.4]{GGR}. The integral conditionally converges if $0 <\Re(a+ a') <1$, while the right-hand side is clearly meromorphic in $a,a'\in \mathbb{C}$. It satisfies two difference equations
\begin{align}
	\bm{\Gamma}(a + 1|a') = a \bm{\Gamma}(a|a'), \qquad \bm{\Gamma}(a|a' + 1) =-a' \bm{\Gamma}(a|a')
\end{align}
and has the properties
\begin{align}
	\bm{\Gamma}(a|a') = (-1)^{a - a'} \bm{\Gamma}(a'|a), \qquad \bm{\Gamma}(a|a') \bm{\Gamma}(1 - a| 1 - a') = (-1)^{a - a'},
\end{align}
which follow from the reflection formula $\Gamma(a) \Gamma(1 - a) = \pi/\sin(\pi a)$.

The complex analogue of the Euler integral~\eqref{Euler-beta} has the form
\begin{align} \label{cgamma-beta}
	\frac{1}{\pi} \int_{\mathbb{C}} [z]^{a - 1} [1 - z]^{b - 1} d^2z = \frac{\bm{\Gamma}(a|a') \bm{\Gamma}(b|b')}{\bm{\Gamma}(a+b|a'+b')},
\end{align}
see~\cite[p. 2]{N} and references therein. Here we assume $a - a', b - b' \in \mathbb{Z}$ to have single-valued integrand and
\begin{align}
	\Re (a + a') > 0, \qquad \Re(b + b') > 0, \qquad \Re(a + a' + b + b') < 2
\end{align}
for the integral to be (absolutely) convergent.

\subsection{Hyperbolic case} \label{sec:hyp-gamma}

It is instructive to first recall the trigonometric gamma function due to Jackson
\begin{align}\label{qgamma}
	\Gamma_q(z) = \frac{(q; q)_{\infty}}{(q^z; q)_{\infty}} \, (1 - q)^{1 - z}, \qquad (x; q)_{\infty} = \prod_{k = 0}^\infty (1 - xq^{k}),
\end{align}
which satisfies  equations
\begin{align}
	\Gamma_q(z+1) =\frac{1-q^z}{1-q}\Gamma_q(z), \qquad \Gamma_q\biggl(z+\frac{2\pi \imath}{\ln q}\biggr) =e^{-2\pi \imath\frac{\ln (1-q)}{\ln q}} \Gamma_q(z).
\end{align}
It reduces to the Euler gamma function in the limit $\lim_{q \to 1} \Gamma_q(x) = \Gamma(x)$ \cite{K}. The infinite $q$-products in \eqref{qgamma} converge only if $|q| < 1$.

The hyperbolic gamma function \cite{R} is a variant of $q$-gamma function that remains well defined at $|q| = 1$.
With different conventions it is also known as the ``modular quantum dilogarithm'' \cite{F} and its reciprocal introduced in \cite{Sh} is called the ``double sine'' function. We define it as
\begin{align} \label{hgamma-def}
	\gamma^{(2)}(z) \equiv \gamma^{(2)}(z; \omega_1, \omega_2) = e^{- \frac{\pi \imath}{2} B_{2,2}(z; \omega_1, \omega_2)} \, \gamma(z; \omega_1, \omega_2)
\end{align}
where $B_{2,2}$ is the second order multiple Bernoulli polynomial
\begin{align} \label{B22-def}
	B_{2,2}(z;\omega_1, \omega_2) = \frac{1}{\omega_1\omega_2} \left( \Bigl(z-\frac{\omega_1+\omega_2}{2} \Bigr)^2-\frac{\omega_1^2+\omega_2^2}{12}\right)
\end{align}
and
\begin{align} \label{hgamma-int}
	\gamma(z; \omega_1, \omega_2) = \frac{( \tilde{q} e^{2\pi \imath \frac{z}{\omega_1}}; \tilde{q} )_{\infty}}{( e^{2\pi \imath \frac{z}{\omega_2}}; q )_{\infty}} = \exp\left(-\int_{\mathbb{R}+\imath 0}\frac{e^{zt}}{(1-e^{\omega_1 t})(1-e^{\omega_2 t})}\frac{dt}{t}\right)
\end{align}
with parameters
\begin{align}
	q = e^{2\pi \imath \frac{\omega_1}{\omega_2}}, \qquad \tilde{q} = e^{-2\pi \imath \frac{\omega_2}{\omega_1}}.
\end{align}
On the one hand, for $\Im \omega_1/\omega_2 > 0$
both $q$-products in \eqref{hgamma-int} converge and in this case the hyperbolic gamma function
is essentially equal to the ratio of two trigonometric ones.
This is similar to the relation between rational and complex rational gamma functions. On the other hand, the integral representation~\eqref{hgamma-int} is well defined under assumptions $0 < \Re z < \Re(\omega_1 + \omega_2)$ and $\Re \omega_1, \Re\omega_2 \geq 0$, which admits $|q| = |\tilde{q}| = 1$. In what follows we always assume $\Re \omega_1, \Re \omega_2 \geq 0$.

\begin{figure}[t]
	\centering
	\begin{tikzpicture}[line cap = round, thick]
		\def\xm{4.3} 
		\def\ym{3.4} 
		\def\z{0.4} 
		
		\def\ofx{1}
		\def\ofy{1}
		\def\osx{1.1}
		\def\osy{-0.6}
		\def\r{1pt}
		\def\s{0.7}
		\pgfmathsetmacro{\ofsx}{\ofx+\osx}
		\pgfmathsetmacro{\ofsy}{\ofy+\osy}
		
		\pgfmathsetmacro{\bx}{\s*(\ofsx + 3.3*\osx)}
		\pgfmathsetmacro{\by}{\s*(\ofsy + 3.3*\osy)}
		\pgfmathsetmacro{\mx}{\s*\ofsx}
		\pgfmathsetmacro{\my}{\s*\ofsy}		
		\pgfmathsetmacro{\tx}{\s*(\ofsx + 3.3*\ofx)}
		\pgfmathsetmacro{\ty}{\s*(\ofsy + 3.3*\ofy)}
		\draw[gray!30!white, dashed]  (\bx, \by) -- (\mx, \my) node[xshift = -0.6cm, yshift = 0.3cm] {\footnotesize \color{black} $\omega_1 + \omega_2$} -- ( \tx, \ty ); 		
		
		\pgfmathsetmacro{\bsx}{\s*(-3.3*\osx)}
		\pgfmathsetmacro{\bsy}{\s*(-3.3*\osy)}
		\pgfmathsetmacro{\msx}{0}
		\pgfmathsetmacro{\msy}{0}		
		\pgfmathsetmacro{\tsx}{\s*(- 3.3*\ofx)}
		\pgfmathsetmacro{\tsy}{\s*(- 3.3*\ofy)}
		\draw[gray!30!white, dashed]  (\bsx, \bsy) -- (\msx, \msy)-- ( \tsx, \tsy ); 		
		
		\draw[gray!40!white, ->] (-\xm + 1.7, 0) -- (\xm, 0);
		\draw[gray!40!white, ->] (0, -\ym + 0.7) -- (0, \ym);
		
		\draw[gray!50] (\xm - \z, \ym) -- (\xm - \z, \ym - \z) -- node[text = gray!90, yshift = 0.2cm] {\small $z$} (\xm, \ym - \z);
		
		\foreach \n in {0, 1, ..., 5}{
			\foreach \k in {0, 1, ..., 6}{
				\pgfmathparse{int(\n + \k)}
				\ifnum \pgfmathresult < 4
				\pgfmathsetmacro{\x}{\s*(\ofsx+\n*\ofx + \k*\osx)}
				\pgfmathsetmacro{\y}{\s*(\ofsy+\n*\ofy + \k*\osy)}
				\draw[thin, fill = white] (\x, \y) circle (\r);
				\fi
			}
		}
		
		\foreach \n in {0, 1, ..., 5}{
			\foreach \k in {0, 1, ..., 6}{
				\pgfmathparse{int(\n + \k)}
				\ifnum \pgfmathresult < 4
				\pgfmathsetmacro{\x}{\s*(-\n*\ofx - \k*\osx)}
				\pgfmathsetmacro{\y}{\s*(-\n*\ofy - \k*\osy)}
				\filldraw (\x, \y) circle (\r);
				\fi
			}
		}
		
	\end{tikzpicture}
	\caption{Poles (black) and zeros (white) of hyperbolic gamma function} \label{fig:poles-zeros}
\end{figure}
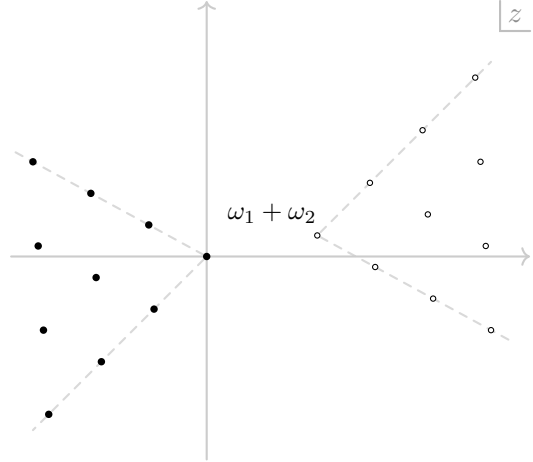

The hyperbolic gamma function satisfies two difference equations
\begin{align}
	\frac{\gamma^{(2)}(z + \omega_1) }{\gamma^{(2)}(z)}= 2\sin \frac{\pi z}{\omega_2}, \qquad \frac{\gamma^{(2)}(z + \omega_2)}{\gamma^{(2)}(z)} = 2\sin \frac{\pi z}{\omega_1}
\end{align}
and the reflection relation
\begin{align} \label{hgamma-refl}
	\gamma^{(2)}(z)  \gamma^{(2)}(\omega_1 + \omega_2 - z) = 1.
\end{align}
As clear from representation~\eqref{hgamma-int}, it also has properties
\begin{align}
	\gamma^{(2)}(z; \omega_1, \omega_2) = \gamma^{(2)}(z; \omega_2, \omega_1) , \quad \gamma^{(2)}(az; a\omega_1, a\omega_2) = \gamma^{(2)}(z; \omega_1, \omega_2), \quad a> 0
\end{align}
and it obeys the complex conjugation rule
$\overline{\gamma^{(2)}(z; \omega_1, \omega_2)}=\gamma^{(2)}(\bar z; \bar\omega_1, \bar\omega_2)$.
Besides, $\gamma^{(2)}(z)$ is a meromorphic function with the poles and zeros at the points
\begin{align}
	z_{\mathrm{poles}} = - m_1 \omega_1 - m_2 \omega_2, \qquad z_{\mathrm{zeros}} = \omega_1 + \omega_2 + m_1 \omega_1 + m_2 \omega_2, \qquad m_1, m_2 \in \mathbb{Z}_{\geq 0},
\end{align}
which form two wedges, see Figure~\ref{fig:poles-zeros}. Since $\gamma^{(2)}(z; \omega_1, \omega_2)$ is symmetric in $\omega_1, \omega_2$, without loss of generality we assume that $\arg \omega_1 \geq \arg\omega_2$. Then we have the following asymptotics outside of wedges
\begin{align} \label{hgamma-asymp}
	\begin{aligned}
		& \lim_{z\to \infty} e^{\frac{\pi\imath}{2} B_{2,2}(z; \omega_1,\omega_2)}\gamma^{(2)}(z)=1,
		&& \quad\arg \omega_1<\arg z<\arg \omega_2+\pi, \\[6pt]
		& \lim_{z\to \infty} e^{-\frac{\pi\imath}{2} B_{2,2}(z; \omega_1,\omega_2)} \gamma^{(2)}(z)=1,
		&& \quad \arg\omega_1-\pi<\arg z<\arg \omega_2.
	\end{aligned}
\end{align}
Proofs of the above properties can be found in~\cite[Section III.A]{R}, where the function
\begin{align} \label{G-Ruij}
	G(\omega_1, \omega_2; z) = \gamma^{(2)}\biggl( \frac{\omega_1 + \omega_2}{2} - \imath z; \omega_1, \omega_2 \biggr)
\end{align}
is used.

For brevity, denote
\begin{align}
	\gamma^{(2)}(a \pm b) = \gamma^{(2)}(a + b) \, \gamma^{(2)}(a - b).
\end{align}
The hyperbolic gamma function satisfies the Fourier transform identity~\cite[Proposition C.1]{R2}
\begin{align}\label{hgamma-beta}
	\int_{\imath \mathbb{R}} e^{\frac{2\pi \imath}{\omega_1 \omega_2} \lambda z} \, \gamma^{(2)}\biggl(\pm z + \frac{\omega_1 + \omega_2}{2} - g \biggr) \, \frac{dz}{\imath \sqrt{\omega_1 \omega_2}} = \frac{\gamma^{(2)} ( \pm \lambda + g )}{\gamma^{(2)}(2g)}.
\end{align}
Here we assume $| \Re \lambda| < \Re g < \Re (\omega_1 + \omega_2)/2$: the first inequality ensures convergence, while the second one guarantees that integration contour separates two series of integrand poles.

The above identity is the hyperbolic analogue of Euler beta integral~\eqref{Euler-beta}. Indeed, in the limit~$\omega_1 \to 0^+$ (that is $q \to 1$) the hyperbolic gamma function reduces to classical functions~\cite[Propositions III.6, III.7]{R}
\begin{align} \label{hgamma-cl-lim-1}
	& \gamma^{(2)}(z \omega_1) \underset{\omega_1 \to 0^+}{=} \frac{1}{\sqrt{2\pi}} \, \biggl( \frac{2\pi \omega_1}{\omega_2} \biggr)^{z - \frac{1}{2}} \, \Gamma(z), \\[10pt]  \label{hgamma-cl-lim-2}
	& \frac{\gamma^{(2)}(z + u \omega_1)}{\gamma^{(2)}(z)} \underset{\omega_1 \to 0^+}{=} \biggl( 2 \sin \frac{\pi z}{\omega_2} \biggr)^u.
\end{align}
So, assuming $\omega_1, \omega_2, g > 0$, rescaling the parameters $g = u \omega_1$, $\lambda = v \omega_1$ and using reflection formula~\eqref{hgamma-refl} we obtain
\begin{align}
	& \frac{\gamma^{(2)} ( \pm \lambda + g)}{\gamma^{(2)}(2g)} = \frac{\gamma^{(2)} ( (\pm v + u\bigr) \omega_1 )}{\gamma^{(2)}(2u \omega_1)} \underset{\omega_1 \to 0^+}{=} \frac{1}{2\pi} \sqrt{ \frac{\omega_2}{\omega_1} }\,  \frac{\Gamma(v + u) \Gamma(-v + u)}{\Gamma(2u)}, \\[10pt] \label{ratio-hgamma-cl-lim}
	& \gamma^{(2)}\biggl(\pm z + \frac{\omega_1 + \omega_2}{2} - g \biggr) = \frac{\gamma^{(2)}\bigl(z + \frac{\omega_2}{2} + \frac{1 - 2u}{2} \omega_1 \bigr) }{\gamma^{(2)}\bigl(z + \frac{\omega_2}{2} + \frac{1 + 2u}{2} \omega_1 \bigr) } \underset{\omega_1 \to 0^+}{=} \biggl( 2 \cos \frac{\pi z}{\omega_2} \biggr)^{-2u}.
\end{align}
It remains to justify that one can interchange the limit $\omega_1 \to 0^+$ and integration over $z$. By~\cite[Proposition 3.2]{BCDK} we have the bound
\begin{align} \label{ratio-hgamma-cl-bound}
	\Biggl|   \frac{\gamma^{(2)}\bigl(z + \frac{\omega_2}{2} + \frac{1 - 2u}{2} \omega_1 \bigr) }{\gamma^{(2)}\bigl(z + \frac{\omega_2}{2} + \frac{1 + 2u}{2} \omega_1 \bigr) }  \Biggr| \leq C \, e^{-\frac{\pi}{\omega_2} |z|}
\end{align}
uniform in $z \in \imath \mathbb{R}$ and $\omega_1 \in (0, \Omega]$ for sufficiently small $\Omega$. Hence, we can use dominated convergence theorem, and the integral identity~\eqref{hgamma-beta} reduces to
\begin{align} \label{Euler-beta-2}
	2\pi  \int_{\imath \mathbb{R}} e^{\frac{2\pi \imath}{\omega_2} v z} \, \biggl( 2 \cos \frac{\pi z}{\omega_2} \biggr)^{-2u} \, \frac{dz}{\imath \omega_2} =  \frac{\Gamma(v + u) \Gamma(-v + u)}{\Gamma(2u)}.
\end{align}
This transforms into the Euler integral~\eqref{Euler-beta} after the change of variable $t = 1/(1 + e^{2\pi \imath z/\omega_2})$.

\section{From hyperbolic to complex rational beta integral} \label{sec:beta-lim}

The reduction described at the end of the previous section is well known. Below we show that the integral~\eqref{hgamma-beta} can also be reduced to the complex rational beta integral~\eqref{cgamma-beta} in the limit $\omega_1/\omega_2 \to -1$ (so that $q, \tilde{q} \to 1$).

\subsection{Complex limits of the hyperbolic gamma function} \label{sec:hgamma-comp-lim}

The limit~\eqref{hgamma-cl-lim-1} has a complex analogue rigorously derived in~\cite[Section 2]{SS}. Namely,
\begin{align} \label{hgamma-comp-lim-1}
	\gamma^{(2)}(\imath \sqrt{\omega_1 \omega_2} [m + u \delta]) \underset{\delta \to 0^+}{=} e^{\frac{\pi \imath}{2} m^2} (4\pi \delta)^{\imath u - 1} \, \bm{\Gamma} \biggl( \frac{m + \imath u}{2} \bigg| \frac{-m + \imath u}{2} \biggr),
\end{align}
where it is assumed that
\begin{align}
	\sqrt{ \frac{\omega_1}{\omega_2} } = \imath + \delta + O(\delta^2), \qquad \delta > 0, \qquad m \in \mathbb{Z}, \qquad u \in \mathbb{C}.
\end{align}

One can  apply this limit to the hyperbolic beta integral~\eqref{hgamma-beta}. It can be done in
two different ways. In the first case one can transform the hyperbolic gamma functions in the integrand to
the complex gamma functions. In the second situation one takes parameter values such that
similar replacement takes place on the right-hand side of \eqref{hgamma-beta}.
The first limit was described in detail in our recent paper \cite[Section 4]{BSS} and we briefly
recall it here. Denote
\begin{align}
	g^* = \frac{\omega_1+\omega_2}{2}-g
\end{align}
and use the following parametrization
\begin{align} \label{g_bar}
	g^* = \imath  \sqrt{\omega_1 \omega_2}(r + h \delta), \qquad
	\lambda = \imath\sqrt{\omega_1 \omega_2} (N+\beta),
\qquad r, N \in \mathbb{Z}, \qquad h, \beta \in \mathbb{C}.
\end{align}
When $\delta\to0^+$ infinite number of poles start to pinch the integration contour
around the points $z \in \imath \mathbb{Z}$. Parametrization of $z$ in the form
\begin{align}
	z = \imath\sqrt{\omega_1 \omega_2}(m + u \delta), \qquad m \in \mathbb{Z}, \qquad u \in \mathbb{R}
\end{align}
removes pinching and converts the univariate integral over $z$ into bilateral infinite sum of integrals
over the variable $u$. If one takes $N = N(\delta)$ in such a way that $N\delta\to \alpha$ goes to some fixed number,
then the exponential factor in the integrand is preserved. The limit on the right-hand side is determined by the following identity established in \cite[Section~3]{BSS}
\begin{multline} \label{hgamma-comp-lim-2}
	e^{-\pi \imath Nm - \frac{\pi \imath}{2}m^2} \, \frac{ \gamma^{(2)} \bigl(\imath \sqrt{\omega_1 \omega_2} [N + \beta + m + u \delta + O(\delta^2)]\bigr) }{ \gamma^{(2)} \bigl(\imath \sqrt{\omega_1 \omega_2} [N + \beta]\bigr) } \\
	\underset{\substack{\delta \to 0^+\\[2pt] N\delta \to \alpha \;\,}}{=} \bigl(2\sh \pi(\alpha + \imath \beta)\bigr)^{\frac{m + \imath u}{2}} \, \bigl(2\sh \pi(\alpha - \imath \beta)\bigr)^{\frac{-m + \imath u}{2}} ,
\end{multline}
where it is assumed that\footnote{In~\cite{BSS} we required $|N| \to \infty$, but it is easy to see that the limit holds for fixed $N$ too (when $\alpha=0$). Besides, here we added $O(\delta^2)$ term, which also doesn't spoil the corresponding arguments.}
\begin{align}
	\sqrt{ \frac{\omega_1}{\omega_2}} = \imath + \delta + O(\delta^2), \qquad \delta > 0, \qquad N, m \in \mathbb{Z}, \qquad u, \beta \in \mathbb{C}.
\end{align}
This is a complex analogue of the limit~\eqref{hgamma-cl-lim-2}. As a result, after denoting $r=2\ell,\, h=2s$, one obtains the following identity
\begin{eqnarray} && \makebox[-2em]{}
\frac{1}{4\pi}
 \sum_{m \in \mathbb{Z}+\mu}\int_{\mathbb{R}}
 e^{-2\pi \imath(  \alpha u +  \beta m) } \; {\bf\Gamma}(s\pm u,\ell \pm m)\, du
\nonumber \\[3pt] &&
=  \bm{\Gamma}(2s, 2\ell)\bigl( 2 \ch \pi(\alpha + \imath\beta) \bigr)^{-\ell - \imath s}
\bigl( 2 \ch \pi(\alpha - \imath \beta) \bigr)^{\ell - \imath s},
\label{CBT}\end{eqnarray}
where $\mu \in \{0, 1/2\}$ is such that $\ell + m \in \mathbb{Z}$ and the sum of integrals converges for $\beta \in \mathbb{R}$, $\Im s>-1/2$. This is the complex binomial theorem
which, in turn, is the Fourier inverse of the complex beta integral \eqref{cgamma-beta} (cf. \eqref{cbeta-exp}, \eqref{Jd}).
For further details, see \cite{BSS}.

\begin{remark}
	By $\sh(w)$ and $\ch(w)$ we denote hyperbolic sine and cosine correspondingly. Besides, since the function 
	$$(\sh w)^{a} \, (\sh \bar{w})^{a'}, $$
	with $a - a' \in \mathbb{Z}$, is single-valued in $w \in \mathbb{C} \setminus \imath \pi \mathbb{Z}$, we can take any branch of the individual complex power $(\sh w)^{a} \equiv |\sh w|^a e^{\imath a \arg(\sh w)}$. To avoid ambiguity, here in what follows we choose the principal branch, that is $\arg(\sh w) \in (-\pi, \pi]$.
\end{remark} 

The first principal result of the present paper consists in a rigorous proof of the limiting formula in the second
possible case, when the univariate integral on the left-hand side of \eqref{hgamma-beta} is converted into
a two-dimensional integral over the complex plane. For this purpose choose a different parametrization of variables
\begin{align} \label{gl-param}
	g = \imath \sqrt{\omega_1 \omega_2}(m + u \delta), \qquad \lambda = \imath \sqrt{\omega_1 \omega_2}(k + v \delta), \qquad m,k\in \mathbb{Z}, \qquad u, v\in\mathbb{C},
\end{align}
and apply~\eqref{hgamma-comp-lim-1} to the right-hand side of relation~\eqref{hgamma-beta}, which yields
\begin{align}
	\frac{\gamma^{(2)} ( g \pm \lambda)}{\gamma^{(2)}(2g)} \underset{\delta \to 0^+}{=} \frac{e^{\pi \imath (m + k)}}{4\pi\delta} \, \frac{\bm{\Gamma} (a |a') \bm{\Gamma}(b|b')}{\bm{\Gamma}(a + b| a' + b')},
\end{align}
where
\begin{align} \label{ab-param}
	\begin{aligned}
		& a = \frac{m + k + \imath (u + v)}{2}, && \qquad a' = \frac{-m - k + \imath (u + v)}{2}, \\[6pt]
		& b =  \frac{m - k + \imath(u - v)}{2}, && \qquad b' =  \frac{- m + k + \imath(u - v)}{2}.
	\end{aligned}
\end{align}
Up to inessential factors the result coincides with the right-hand side of the complex beta integral~\eqref{cgamma-beta}.
Therefore, the same limit should be valid for the left-hand side, that is
\begin{align} \label{hbeta-lim-1}
	\lim_{\delta \to 0^+} \delta \, \int_{\imath \mathbb{R}} \Id(z) \, \frac{dz}{\imath \sqrt{\omega_1 \omega_2}} = \frac{e^{\pi \imath (m + k)}}{4\pi^2} \int_{\mathbb{C}} [t]^{a - 1} [1 - t]^{b - 1} d^2t ,
\end{align}
where
\begin{align} \label{Id}
	\Id(z) = e^{\frac{2\pi \imath}{\omega_1 \omega_2} \lambda z} \, \gamma^{(2)}\biggl(\pm z + \frac{\omega_1 + \omega_2}{2} - g \biggr).
\end{align}
A natural question arises whether this reduction can be performed directly at the level of integrals.

Formula~\eqref{hgamma-comp-lim-2} implies that the integrand \eqref{Id}
with $g, \lambda$ parametrized as in~\eqref{gl-param} and $\sqrt{\omega_1/\omega_2} = \imath + \delta + O(\delta^2)$ has the limit
\begin{multline} \label{Id-lim}
	\Id(\imath \sqrt{\omega_1 \omega_2}[N + \beta]) = e^{-2\pi \imath (\beta k + (N + \beta) v \delta )} \, \frac{\gamma^{(2)}\bigl(\imath\sqrt{\omega_1 \omega_2} [N + \beta - m - (u + \imath) \delta + O(\delta^2) ]\bigr)}{ \gamma^{(2)}\bigl(\imath\sqrt{\omega_1 \omega_2} [N + \beta + m + (u - \imath) \delta + O(\delta^2)] \bigr) }  \\[6pt]
	\underset{\substack{\delta \to 0^+\\[2pt] N\delta \to \alpha \; \,}}{=}
\frac{e^{-2\pi \imath (\alpha v + \beta k)}}{ (2\sh \pi (\alpha + \imath \beta))^{m + \imath u} \, (2\sh \pi (\alpha - \imath \beta))^{-m + \imath u}},
\end{multline}
where we assume $N \in \mathbb{Z}$, $\alpha, \beta \in \mathbb{R}$.

The complex beta integral can also be written in terms of the exponential functions. Changing the integration variable $t = 1/(1 - e^{2\pi(\alpha + \imath \beta)})$ and inserting parametrization~\eqref{ab-param} we have
\begin{align} \label{cbeta-exp}
	\frac{e^{\pi \imath (m + k)}}{4\pi^2} \int_{\mathbb{C}} [t]^{a - 1} [1 - t]^{b - 1} d^2t
	= \int_{\mathbb{R}} d\alpha \int_{-\frac{1}{2}}^{\frac{1}{2}} \Jd(\alpha, \beta) \, d\beta,
\end{align}
where
\begin{align}  \label{Jd}
 \Jd(\alpha, \beta) :=
\frac{e^{-2\pi \imath (\alpha v + \beta k)}}{ (2\sh \pi (\alpha + \imath \beta))^{m + \imath u} \, (2\sh \pi (\alpha - \imath \beta))^{-m + \imath u}}.
\end{align}
Notice that  $\Jd(\alpha, \beta)$ is $1$-periodic in $\beta$, $\Jd(\alpha, \beta+1) = \Jd(\alpha, \beta)$, i.e. one can use for integration over $\beta$
any interval $[a, a + 1]$. In the above notation the formula~\eqref{Id-lim} simply reads
\begin{align} \label{Id-lim-2}
	\Id(\imath \sqrt{\omega_1 \omega_2}[N + \beta]) \underset{\substack{\delta \to 0^+\\[2pt] N\delta \to \alpha \; \,}}{=} \Jd(\alpha, \beta),
\end{align}
while the reduction~\eqref{hbeta-lim-1} is equivalent to the following statement.

\begin{theorem} 
	Assume that the parameters of the function $\Id(z) \equiv \Id(z; \lambda, g, \omega_1, \omega_2)$ satisfy the conditions
	\begin{align} \label{o-param}
		\omega_1 = \imath + \delta, \qquad \omega_2 = - \imath + \delta, \qquad \frac{1}{\delta} \in \mathbb{Z}_{>0},
	\end{align}
	and
	\begin{align} \label{gl-param-2}
		\begin{aligned}
			& g = \imath \sqrt{\omega_1 \omega_2}(m + u \delta), && \hspace{1cm} m \in \mathbb{Z}, && \hspace{0.5cm} \Im u \in (-1, 0), \\[6pt]
			& \lambda = \imath \sqrt{\omega_1 \omega_2}(k + v \delta), && \hspace{1cm} k\in \mathbb{Z}, &&  \hspace{0.5cm}  v \in \mathbb{R}.
		\end{aligned}
	\end{align}
	Then the following limit holds
	\begin{align} \label{hbeta-lim-2}
		\lim_{\delta \to 0^+} \delta \int_{\imath \mathbb{R}} \Id(z) \, \frac{dz}{\imath \sqrt{\omega_1 \omega_2}} = \int_{\mathbb{R}} d\alpha \int_{-\frac{1}{2}}^{\frac{1}{2}} \Jd(\alpha, \beta) \, d\beta.
	\end{align}
\end{theorem}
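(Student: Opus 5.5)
With $\omega_1=\imath+\delta$, $\omega_2=-\imath+\delta$ we have $\omega_1\omega_2 = 1+\delta^2$. The plan is to change variables $z=\imath\sqrt{\omega_1\omega_2}\,x$ with $x\in\mathbb{R}$, so that $dz/(\imath\sqrt{\omega_1\omega_2}) = dx$. Then we write $x = N+\beta$ with $N\in\mathbb{Z}$ and $\beta\in[-\tfrac12,\tfrac12)$. This turns the left-hand side of \eqref{hbeta-lim-2} into
\begin{align*}
\delta\sum_{N\in\mathbb{Z}}\int_{-\frac12}^{\frac12}\Id(\imath\sqrt{\omega_1\omega_2}[N+\beta])\,d\beta .
\end{align*}
Next we set $\alpha_N=N\delta$; since $1/\delta\in\mathbb{Z}_{>0}$, these points form a uniform lattice $\delta\mathbb{Z}$. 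We introduce the step function $F_\delta(\alpha,\beta)=\Id(\imath\sqrt{\omega_1\omega_2}[N+\beta])$ for $\alpha\in[N\delta,(N+1)\delta)$. The left-hand side is then exactly $\int_{\mathbb{R}}d\alpha\int_{-1/2}^{1/2}F_\delta(\alpha,\beta)\,d\beta$. By \eqref{Id-lim-2}, applied with $N=N(\delta)=\lfloor\alpha/\delta\rfloor$ so that $N\delta\to\alpha$, we get $F_\delta(\alpha,\beta)\to\Jd(\alpha,\beta)$ pointwise for a.e.\ $(\alpha,\beta)$. The points to exclude are $\alpha=0,\beta=0$, where $\sh\pi(\alpha\pm\imath\beta)$ vanishes; this is a null set. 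It then remains to justify dominated convergence.

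For dominated convergence we need a bound $|F_\delta(\alpha,\beta)|\le G(\alpha,\beta)$, uniform in small $\delta$, with $G\in L^1(\mathbb{R}\times[-\tfrac12,\tfrac12])$. Its natural shape mirrors $|\Jd|$. We have
\begin{align*}
|\Jd(\alpha,\beta)| = e^{2\pi\alpha\Im v}\,|2\sh\pi(\alpha+\imath\beta)|^{\Im u}\,|2\sh\pi(\alpha-\imath\beta)|^{\Im u}\,e^{\pm\ldots},
\end{align*}
where the last factor carries the $\arg$-terms and is bounded; here $v$ is real, so the first factor equals $1$. For large $|\alpha|$ this behaves like $e^{2\pi|\alpha|\Im u}$, which decays because $\Im u<0$. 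Near $(0,0)$ it behaves like $|\alpha+\imath\beta|^{2\Im u}$, which is integrable in two dimensions because $2\Im u>-2$. So I would aim for
\begin{align*}
G(\alpha,\beta) = C\,\bigl(|\alpha|+|\beta|\bigr)^{2\Im u}\,e^{-c|\alpha|}
\end{align*}
on the fundamental domain, with $0<c<2\pi|\Im u|$. This is exactly where the constraint $\Im u\in(-1,0)$ enters.

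The main obstacle is proving this uniform bound for the ratio of hyperbolic gamma functions in \eqref{Id-lim}. We need it uniformly in $N$ and $\beta$, and it must cover two regimes. The first is large $|N+\beta|$, i.e.\ $|\alpha|$ of order one up to infinity. There I would use the asymptotics \eqref{hgamma-asymp} together with an error estimate uniform in $\delta$. Such an estimate can be extracted from the integral representation \eqref{hgamma-int}, after rotating the contour appropriately, since near $\omega_{1,2}=\pm\imath$ the wedges of Figure~\ref{fig:poles-zeros} degenerate. Doing so should yield a bound of the form $|2\sh\pi(\alpha\pm\imath\beta)|^{\Im u}$ times a constant. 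The second regime is $|N|\lesssim 1/\delta$ with $N\delta$ small, where zeros and poles approach the contour near $x\in\mathbb{Z}$. I expect this to be the harder case. There I would write $\gamma^{(2)}(w)$ via the difference equations and the limit \eqref{hgamma-comp-lim-1}, or via a uniform version of \eqref{hgamma-comp-lim-2} with explicit remainder. For $N=0$ and $\beta$ small, $x$ itself is the small variable, and \eqref{hgamma-comp-lim-1}-type behaviour gives $|\beta|^{2\Im u}$ up to powers of $\delta$ which cancel in the ratio. The interpolation for $N\ne0$, $|N\delta|$ small, should give $(|N\delta|+|\beta|)^{2\Im u}$. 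I would obtain the needed estimates as appendix lemmas: I would write $\log\gamma^{(2)}$ through \eqref{hgamma-int}, split off the $B_{2,2}$ part, and bound the remaining integral uniformly, treating separately $|\Im(\text{argument})|$ near the degenerate directions. With $G$ established, dominated convergence gives \eqref{hbeta-lim-2}.
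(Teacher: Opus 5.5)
Your dominated-convergence framework is sound, and it differs from the paper's proof. With the step function $F_\delta$, a.e.\ convergence plus an integrable majorant gives \eqref{hbeta-lim-2} directly. Fubini then identifies the Lebesgue integral with the iterated one, since $|\Jd|=|2\sh\pi(\alpha+\imath\beta)|^{2\Im u}$ is integrable on $\mathbb{R}\times[-\frac12,\frac12]$. This is leaner than the paper's route: it bypasses the truncation at $|N|=M/\delta$, the term-by-term comparison of $\Id$ with $\Jd(N\delta,\beta)$ (harmonic sums, Lemma~\ref{lem:I-bound}) and the improper Riemann-sum Lemma~\ref{lem:riem-sum-beta}. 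It also needs only uniform boundedness of the gamma ratios, not the rate $C_1\delta/(1-e^{-C_2|N|\delta})$. The paper's estimates do supply your majorant. Proposition~\ref{prop:g-ratio}(i), which has no upper restriction on $|N|$, gives $|\Id(\imath\sqrt{\omega_1\omega_2}[N+\beta])|\le C|2\sh\pi(N\delta+\imath\beta)|^{2\Im u}$ for all $|N|\ge|m|+4$, as in Corollary~\ref{cor:g-ratio-bigN}. Corollary~\ref{cor:g-ratio-smallN} gives $\le C(\delta^2+\beta^2)^{\Im u}$ for the finitely many remaining $N$. However, that uniform bound is essentially the whole analytic content of the theorem. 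Your proposal does not prove it, and the route you sketch for it would not work.

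\textbf{Large $|\alpha|$.} The asymptotics \eqref{hgamma-asymp} hold for fixed $\omega$'s. Here their sectors $\arg\omega_1<\arg z<\arg\omega_2+\pi$ shrink to width $2\arctan\delta$. The neglected terms have relative size $|e^{2\pi\imath z/\omega_2}|=e^{-2\pi\delta(N+\beta)/\sqrt{1+\delta^2}}$, which is of order one when $N\delta\to\alpha=O(1)$. The factor $1-e^{-2\pi(\alpha+\imath\beta)}$ in $\Jd$ is exactly this ``error'', so no $\delta$-uniform remainder estimate of the kind you want exists.

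\textbf{Contour rotation.} You cannot rotate the contour in \eqref{hgamma-int}. The poles $2\pi n(1\pm\imath\delta)/(1+\delta^2)$, $n\ne0$, of $1/((1-e^{\omega_1t})(1-e^{\omega_2t}))$ lie on rays at angles $\pm\arctan\delta$ to $\mathbb{R}$ and squeeze the contour from both sides.

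\textbf{Small $|N|$ and the intermediate range.} The limits \eqref{hgamma-comp-lim-1} and \eqref{hgamma-comp-lim-2} are pointwise (at fixed $\beta/\delta$, resp.\ fixed $\beta$), so they give nothing uniform in $\beta$. They give nothing at all in the range $1\ll|N|\ll 1/\delta$, where your ``interpolation'' just restates the desired bound.

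\textbf{What is actually needed.} You need a $\delta$-uniform estimate for ratios of $q$-products with $|q|=e^{-4\pi\delta/(1+\delta^2)}\to1$. Each product separately has logarithm of order $1/\delta$, and only the ratio is controlled. The paper proves this in Lemma~\ref{lem:q-pr}:
\begin{itemize}
\item expand $\ln(w;q)_\infty=-\sum_k w^k/(k(1-q^k))$ and split at $k=\lfloor 1/\delta\rfloor$;
\item Taylor-bound the terms with $k\le 1/\delta$ as a function of $(\delta,k\delta,N\delta)$, and sum geometric series.
\end{itemize}
This gives an error $C_1\delta/(1-e^{-C_2N\delta})$, uniform from $N\delta\to0$ to $N\delta\to\infty$. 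Proposition~\ref{prop:g-ratio} then removes the restriction $m>|\Im u|$ by the shift $N\to N-n_0$, $m\to m+n_0$. Small $|N|$ are handled by shifting $z$ by $k\omega_1-k\omega_2=2\imath k$ via the difference equations, which moves into the range where the uniform estimate applies. The resulting finitely many sine ratios are bounded above and below because $\Im u\notin\mathbb{Z}$ (Corollary~\ref{cor:g-ratio-smallN}).
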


The proof by direct transformation of one integral into another is given in Sections~\ref{sec:beta-outline}--\ref{sec:beta-lim-calc}. But before that let us make a few remarks about the assumptions on the parameters~\eqref{o-param},~\eqref{gl-param-2}. First, notice that $\sqrt{\omega_1/\omega_2} = \imath + \delta + O(\delta^2)$, as desired. The choice $\delta=1/n$, $n\in\mathbb{Z}_{>0}$, is done for a convenience of considerations below. 

Second, recall that the hyperbolic beta integral evaluation formula~\eqref{hgamma-beta} holds under the assumption $|\Re \lambda| < \Re g < \Re(\omega_1 + \omega_2)/2 = \delta$. To simplify matters, in what follows we take $\lambda \in \imath \mathbb{R}$, or equivalently $v \in \mathbb{R}$.\footnote{In practice, this is usually sufficient for analytic continuation of the hypergeometric type integrals in their parameters.} Hence, the condition $0 < \Re g < \delta$ forces the assumption $\Im u \in (-1, 0)$.

Finally, it is also possible to perform the reduction in the case $m,k \in \mathbb{Z} + 1/2$. This requires only slight modifications, and we omit it to ease the exposition.

\subsection{Outline} \label{sec:beta-outline}

Let us describe the main steps of the reduction~\eqref{hbeta-lim-2}. The integrand $\Id(z)$ \eqref{Id} has poles at the points
\begin{align}
	\begin{aligned}
		z_{\mathrm{poles}} & = \pm \biggl( m_1 \omega_1 + m_2 \omega_2 + \frac{\omega_1 + \omega_2}{2} - g \biggr) \\[6pt]
		& = \pm \bigl( \imath(m_1 - m_2) + (m_1 + m_2 + 1) \delta - \imath \sqrt{1 + \delta^2}(m + u \delta) \bigr),
	\end{aligned}
\end{align}
where $m_1, m_2 \in \mathbb{Z}_{\geq 0}$. In the limit $\delta \to 0^+$ these poles pinch integration contour $\imath \mathbb{R}$ at the points $\imath \mathbb{Z}$, as illustrated in Figure~\ref{fig:pinch}.

\begin{figure}[h]
	\centering
	\begin{tikzpicture}[line cap = round, thick]
		\def\xm{3.3} 
		\def\ym{3.2} 
		\def\z{0.4} 
		
		\def\m{-1}
		\def\ru{0.5}
		\def\iu{-0.9}
		\def\d{1}
		\def\r{1pt}
		\def\s{0.7}
		\pgfmathsetmacro{\gx}{-sqrt(1+\d*\d)*\iu*\d}  
		\pgfmathsetmacro{\gy}{sqrt(1+\d*\d)*(\m + \ru*\d)}  
		
		\pgfmathsetmacro{\mx}{\s*\gx}
		\pgfmathsetmacro{\my}{\s*\gy}
		\pgfmathsetmacro{\tx}{\s*(\gx + 3.3*\d)}
		\pgfmathsetmacro{\ty}{\s*(\gy + 3.3)}
		\pgfmathsetmacro{\bx}{\s*(\gx + 3.3*\d)}
		\pgfmathsetmacro{\by}{\s*(\gy - 3.3)}
		\draw[gray!30!white, dashed]  (\bx, \by) -- (\mx, \my) -- ( \tx, \ty ); 		
		\draw[gray!30!white, dashed]  (-\bx, -\by) -- (-\mx, -\my) -- ( -\tx, -\ty );		
		
		\draw[gray!40!white, ->] (-\xm, 0) -- (\xm, 0);
		\draw[gray!40!white, ->] (0, -\ym) -- (0, \ym);
		
		\draw[gray!50] (\xm - \z, \ym) -- (\xm - \z, \ym - \z) -- node[text = gray!90, yshift = 0.2cm] {\small $z$} (\xm, \ym - \z);
		
		\foreach \n in {0, 1, ..., 5}{
			\foreach \k in {0, 1, ..., 6}{
				\pgfmathparse{int(\n + \k)}
				\ifnum \pgfmathresult < 4
				\pgfmathparse{\s*(\gx + (\n + \k)*\d)}
				\pgfmathsetmacro{\x}{\pgfmathresult}
				\pgfmathparse{\s*(\gy + \n - \k)}
				\pgfmathsetmacro{\y}{\pgfmathresult}
				\filldraw (\x, \y) circle (\r);
				\filldraw (-\x, -\y) circle (\r);
				\fi
			}
		}
		
		\draw[red!80!black, very thick, ->-] (0, -\ym) -- (0, \ym - 0.2);

	\end{tikzpicture}
	\hspace{2.5cm}
	\begin{tikzpicture}[line cap = round, thick]
		\def\xm{2.3}
		\def\ym{3.2}
		\def\z{0.4}
		\def\m{-1}
		\def\ru{0.5}
		\def\iu{-0.9}
		\def\d{0.2}
		\def\r{1pt}
		\def\s{0.7}
		\pgfmathsetmacro{\gx}{-sqrt(1+\d*\d)*\iu*\d}
		\pgfmathsetmacro{\gy}{sqrt(1+\d*\d)*(\m + \ru*\d)}

		\pgfmathsetmacro{\mx}{\s*\gx}
		\pgfmathsetmacro{\my}{\s*\gy}
		\pgfmathsetmacro{\tx}{\s*(\gx + 5.3*\d)}
		\pgfmathsetmacro{\ty}{\s*(\gy + 5.3)}
		\pgfmathsetmacro{\bx}{\s*(\gx + 3.3*\d)}
		\pgfmathsetmacro{\by}{\s*(\gy - 3.3)}
		\pgfmathsetmacro{\rx}{\s*(\gx + 13.3*\d)}
		\draw[gray!30!white, dashed]  (\bx, \by) -- (\mx, \my) -- ( \tx, \ty ); 		
		\draw[gray!30!white, dashed]  (-\bx, -\by) -- (-\mx, -\my) -- ( -\tx, -\ty ); 		
		
		\draw[gray!40!white, ->] (-\xm, 0) -- (\xm, 0);
		\draw[gray!40!white, ->] (0, -\ym) -- (0, \ym);
		
		\draw[gray!50] (\xm - \z, \ym) -- (\xm - \z, \ym - \z) -- node[text = gray!90, yshift = 0.2cm] {\small $z$} (\xm, \ym - \z);
		
		\foreach \n in {0, 1, ..., 20}{
			\foreach \k in {0, 1, ..., 20}{
				\pgfmathparse{int(\n + \k) < 13}
				\ifthenelse{\pgfmathresult = 1}{
					\pgfmathparse{\s*(\gx + (\n + \k)*\d)}
					\pgfmathsetmacro{\x}{\pgfmathresult}
					\pgfmathparse{\s*(\gy + \n - \k)}
					\pgfmathsetmacro{\y}{\pgfmathresult}
					\pgfmathparse{(\y < \ym) && (\y > -\ym) ? 1 : 0}
					\ifthenelse{\pgfmathresult = 1}{
						\filldraw (\x, \y) circle (\r);
						\filldraw (-\x, -\y) circle (\r);}{};
				}{};
			}
		}
		
		\draw[red!80!black, very thick, ->-] (0, -\ym) -- (0, \ym - 0.2);
		
	\end{tikzpicture}

	\caption{Poles of $\Id(z)$ with $\delta = 1$ and $\delta = 0.2$} \label{fig:pinch}
\end{figure}
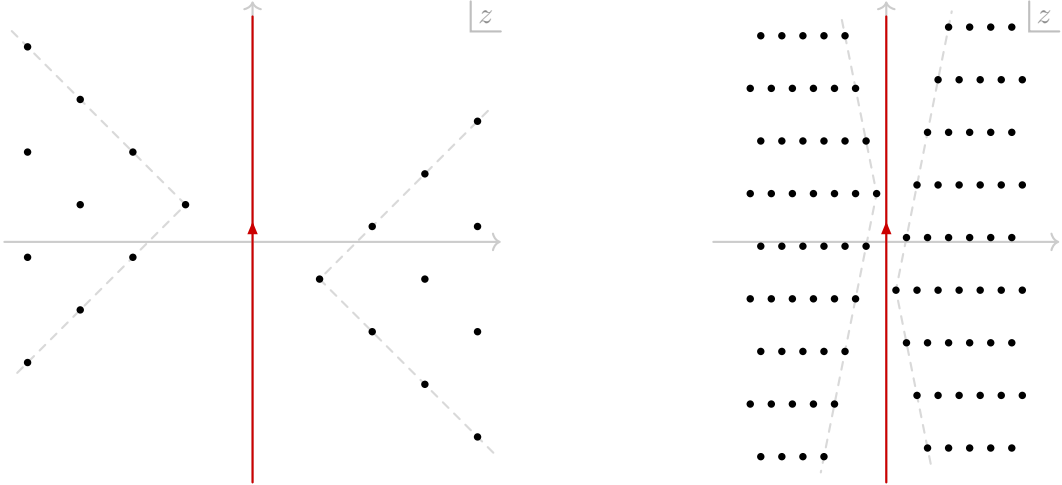

The first step is to convert the integral over $z$ into the sum of integrals around pinched points
\begin{align}
	\int_{\imath \mathbb{R}} \Id(z) \, \frac{dz}{\imath \sqrt{\omega_1 \omega_2}} = \sum_{N \in \mathbb{Z}} \int_{- \frac{1}{2}}^{\frac{1}{2}} \Id(\imath \sqrt{\omega_1 \omega_2} \, [N + \beta]) \, d\beta.
\end{align}
Notice that the argument of $\Id$-function on the right is the same, as in the limiting formula~\eqref{Id-lim-2}.

Second, we cut off the sum at $|N| = M/\delta$
\begin{align}
	\sum_{N \in \mathbb{Z}} \int_{- \frac{1}{2}}^{\frac{1}{2}} \Id(\imath \sqrt{\omega_1 \omega_2} \, [N + \beta]) \, d\beta = \lim_{M \to \infty} \sum_{|N| \leq M/\delta} \int_{- \frac{1}{2}}^{\frac{1}{2}} \Id(\imath \sqrt{\omega_1 \omega_2} \, [N + \beta]) \, d\beta
\end{align}
assuming $M \in \mathbb{Z}_{>0}$. Note that the wedge slopes in Figure~\ref{fig:pinch} are equal to $1/\delta$, so that the distance from the point $z = \imath \sqrt{\omega_1 \omega_2} M/ \delta$ to the poles is of order one, as opposed to (say) $z = 0$, for which the distance is of order $\delta$.

Since the points with $|N| \geq M/\delta$ experience no pinching, the tails of the above series (i.e. the sum over $|N| \geq M/\delta$) are expected to be bounded uniformly in $\delta$. In Section~\ref{sec:beta-lim-switch} we prove that this is indeed the case and, as a result, we can interchange two limits
\begin{multline}\label{change_lim}
	\lim_{\delta \to 0^+} \delta \lim_{M \to \infty} \sum_{|N| \leq M/\delta} \int_{- \frac{1}{2}}^{\frac{1}{2}} \Id(\imath \sqrt{\omega_1 \omega_2} \, [N + \beta]) \, d\beta \\[-6pt]
	= \lim_{M \to \infty} \, \lim_{\delta \to 0^+} \delta \sum_{|N| \leq M/\delta} \int_{- \frac{1}{2}}^{\frac{1}{2}} \Id(\imath \sqrt{\omega_1 \omega_2} \, [N + \beta]) \, d\beta.
\end{multline}
Mind that from the left we multiply the integral by $\delta$ before taking the limit $\delta \to 0^+$, as in the claimed formula~\eqref{hbeta-lim-2}. 

The final step is to calculate the limit
\begin{align}
	\lim_{\delta \to 0^+} \delta \sum_{|N| \leq M/\delta} \int_{- \frac{1}{2}}^{\frac{1}{2}} \Id(\imath \sqrt{\omega_1 \omega_2} \, [N + \beta]) \, d\beta = \int_{-M}^M d\alpha \int_{-\frac{1}{2}}^{\frac{1}{2}} \Jd(\alpha, \beta) \, d\beta,
\end{align}
which is done in Section~\ref{sec:beta-lim-calc}. The idea is that each term in the truncated sum suits the limiting formula~\eqref{Id-lim-2}. In other words, for fixed $N, \beta$ and small $\delta$ the integrand $\Id(\imath \sqrt{\omega_1 \omega_2} \, [N + \beta])$ is close to $\mathcal{J}(N\delta, \beta)$, so it is reasonable to expect that
\begin{align} \label{IJ-sums}
	\lim_{\delta \to 0^+} \delta \sum_{|N| \leq M/\delta} \int_{- \frac{1}{2}}^{\frac{1}{2}} \Id(\imath \sqrt{\omega_1 \omega_2} \, [N + \beta]) \, d\beta = \lim_{\delta \to 0^+} \delta \sum_{\substack{ |N| \leq M/\delta \\[2pt] N \neq 0}} \int_{- \frac{1}{2}}^{\frac{1}{2}} \Jd(N\delta, \beta) \, d\beta.
\end{align}
Note that we cannot add the term $N = 0$ on the right, since $\Jd(0, \beta)$ is not integrable for generic parameters. However, we show that the corresponding term on the left vanishes
\begin{align} \label{I0-lim}
	\lim_{\delta \to 0^+} \delta \int_{- \frac{1}{2}}^{\frac{1}{2}} \Id(\imath \sqrt{\omega_1 \omega_2} \, \beta) \, d\beta = 0.
\end{align}
The above formulas require uniform bounds on ratios of the hyperbolic gamma functions, which we derive in Appendices \ref{sec:ratios-q} and \ref{sec:ratios-gamma}.

At last, the sum on the right-hand side of \eqref{IJ-sums} is a Riemann sum of mesh $\delta$, so that
\begin{align}
	\lim_{\delta \to 0^+} \delta \sum_{\substack{ |N| \leq M/\delta \\[2pt] N \neq 0}} \int_{- \frac{1}{2}}^{\frac{1}{2}} \Jd(N\delta, \beta) \, d\beta = \int_{-M}^M d\alpha \int_{-\frac{1}{2}}^{\frac{1}{2}} \Jd(\alpha, \beta) \, d\beta.
\end{align}
Although for some values of $\Im u$ the integral over $\alpha$ is improper, the above approximation still holds in this case, see Appendix~\ref{sec:riem-sum-beta}. Taking $M \to \infty$ we finish the reduction~\eqref{hbeta-lim-2}.

\subsection{Interchanging limits} \label{sec:beta-lim-switch}

In this section we prove that the interchange of two limits \eqref{change_lim} is legitimate.
Notice that the separate limit $M\to\infty$ exists for any fixed $\delta > 0$ due to the convergence of the initial hyperbolic integral. Besides, in Section~\ref{sec:beta-lim-calc} we show that the separate limit $\delta \to 0^+$ also exists for any fixed $M > 0$ (by calculating it explicitly). Hence, to interchange two limits it is sufficient to show that the limit $M \to \infty$ is uniform for small enough $\delta \leq \dm$. Standardly, this is done by estimating the tails
\begin{align}\label{Tail}
	\Biggl| \, \delta  \sum_{| N| \geq M/\delta} \, \int_{- \frac{1}{2}}^{\frac{1}{2} }  \Id \bigl(\imath \sqrt{\omega_1 \omega_2} \, [N + \beta] \bigr) \, d\beta \, \Biggr| \leq C_M
\end{align}
by a sequence of constants $C_M$ independent of $\delta$ and such that
\begin{align}
	\lim_{M \to \infty} C_M = 0.
\end{align}
The main ingredient of the proof is Corollary~\ref{cor:g-ratio-bigN} derived in Appendix~\ref{sec:ratios-gamma}.

Inserting parametrizations~\eqref{o-param},~\eqref{gl-param-2} into definition~\eqref{Id} and using reflection formula we rewrite the integrand as
\begin{align} \label{Id-param}
	\Id(\imath \sqrt{\omega_1 \omega_2}[N + \beta]) = e^{-2\pi \imath (N \delta v + \beta k + \beta v \delta )} \, \frac{\gamma^{(2)}\bigl(\imath\sqrt{\omega_1 \omega_2} [N + \beta - m - (u + \imath) \delta + \epsilon(\delta) \delta^2 ]\bigr)}{ \gamma^{(2)}\bigl(\imath\sqrt{\omega_1 \omega_2} [N + \beta + m + (u - \imath) \delta + \epsilon(\delta)\delta^2] \bigr) } ,
\end{align}
where
\begin{align}
	\epsilon(\delta) = \frac{\imath}{\delta} \biggl(1 - \frac{1}{\sqrt{1 + \delta^2}} \biggr)= O(\delta).
\end{align}
By Corollary~\ref{cor:g-ratio-bigN} (with $\nu = 1$), the integrand satisfies the bound
\begin{align}\label{C}
	\bigl| \Id(\imath \sqrt{\omega_1 \omega_2}[N + \beta]) \bigr| \leq C \bigl| 2\sh \pi (N \delta + \imath \beta) \bigr|^{2\Im u}
\end{align}
uniformly for all $|N| \geq 1/\delta$, $0 < \delta \leq \dm$ and $|\beta| \leq 1$ with some constants $C, \dm$.

For large enough $|N|\delta \geq M$
\begin{align}
	| 2\sh \pi (N \delta + \imath \beta) | \geq 2\sh (\pi |N| \delta) \geq \frac{1}{2} e^{\pi |N| \delta}.
\end{align}
Furthermore, by assumption~\eqref{gl-param-2}, $\Im u \in (-1, 0)$. Combining the above estimates we arrive at
\begin{multline}
	\Biggl| \, \delta  \sum_{| N| \geq M/\delta} \, \int_{- \frac{1}{2}}^{\frac{1}{2} } \Id \bigl(\imath \sqrt{\omega_1 \omega_2} \, [N + \beta] \bigr) \, d\beta \, \Biggr| \leq C' \delta \sum_{| N| \geq M/\delta} e^{-2 \pi  |N| \delta |\Im u|} \\
	= 2C' \delta \, \frac{e^{-2 \pi M |\Im u|}}{1 - e^{-2\pi \delta |\Im u|}} \leq C'' \, e^{-2 \pi M |\Im u|},
\end{multline}
where on the last step we use the fact that $\delta / (1 - e^{-2\pi \delta |\Im u|})$ is continuous for $\delta \in [0, \dm]$. Thus, the tails tend to zero uniformly in $\delta$.

\subsection{Calculating the $\delta$-limit} \label{sec:beta-lim-calc}

In this section we calculate the limit
\begin{align} \label{IJ-calc-lim}
	\lim_{\delta \to 0^+} \delta \sum_{|N| \leq M/\delta} \int_{- \frac{1}{2}}^{\frac{1}{2}} \Id(\imath \sqrt{\omega_1 \omega_2} \, [N + \beta]) \, d\beta = \int_{-M}^M d\alpha \int_{-\frac{1}{2}}^{\frac{1}{2}} \Jd(\alpha, \beta) \, d\beta.
\end{align}
The main ingredients for this are the bounds derived in Proposition~\ref{prop:g-ratio} and Corollary~\ref{cor:g-ratio-smallN}, as well as Lemma~\ref{lem:F-prod}, which are proven in Appendix~\ref{sec:ratios-gamma}. Let us proceed with the following steps.
\medskip

\noindent \textit{Step 1.} Taking $N_0 = |m| + 3$ let us prove that
\begin{align} \label{sum-IJ}
	\lim_{\delta \to 0^+} \delta \sum_{N_0 + 1 \leq |N| \leq M/\delta} \int_{- \frac{1}{2}}^{\frac{1}{2}} \Bigl [ \Id(\imath \sqrt{\omega_1 \omega_2} \, [N + \beta]) - \Jd(N \delta, \beta) \Bigr] \, d\beta = 0.
\end{align}
If we factorise the expression~\eqref{Id-param}
\begin{multline}
	\Id(\imath \sqrt{\omega_1 \omega_2}[N + \beta]) = e^{-2\pi \imath (N \delta v + \beta k + \beta v \delta )} \, \frac{\gamma^{(2)}\bigl(\imath\sqrt{\omega_1 \omega_2} [N + \beta - m - (u + \imath) \delta + \epsilon(\delta) \delta^2 ]\bigr)}{ \gamma^{(2)}\bigl(\imath\sqrt{\omega_1 \omega_2} [N + \beta ] \bigr) } \\[8pt]
	\times  \frac{\gamma^{(2)}\bigl(\imath\sqrt{\omega_1 \omega_2} [N + \beta ]\bigr)}{ \gamma^{(2)}\bigl(\imath\sqrt{\omega_1 \omega_2} [N + \beta + m + (u - \imath) \delta + \epsilon(\delta)\delta^2] \bigr) } ,
\end{multline}
then for each ratio of gamma functions we can apply Proposition~\ref{prop:g-ratio}\textit{(ii)} (here we use the condition $|N| \geq N_0 + 1$). If in addition we invoke Lemma~\ref{lem:F-prod}, then for the function in square brackets we obtain the bound
\begin{align}
	\bigl| \Id(\imath \sqrt{\omega_1 \omega_2} \, [N + \beta]) - \Jd(N \delta, \beta) \bigr| \leq \frac{C_1 \delta}{1 - e^{-C_2 |N| \delta}} \,  \bigl| \Jd(N \delta, \beta) \bigr|
\end{align}
with some constants $C_1, C_2 > 0$ uniform in $N ,\delta, \beta$ inside considered intervals.

Furthermore, by definition~\eqref{Jd}, for $|\beta| \leq 1/2$ and $N \ne 0$ we have
\begin{multline} \label{Jd-abs}
	\bigl| \Jd(N\delta, \beta) \bigr| = \bigl| 2\sh \pi (N \delta + \imath \beta) \bigr|^{2\Im u} = \bigl( 4\sh^2 (\pi N \delta) + 4 \sin^2(\pi \beta) \bigr)^{\Im u}  \\[6pt]
	\leq C \, (\delta^2 + \beta^2)^{\Im u},
\end{multline}
since $\Im u \in (-1, 0)$. Using this and the fact that $s/(1 - e^{-C_2 s})$ is bounded for $s \in [0, M]$, we arrive at a simpler bound
\begin{align}
	\bigl| \Id(\imath \sqrt{\omega_1 \omega_2} \, [N + \beta]) - \Jd(N \delta, \beta) \bigr| \leq \frac{C}{|N|} \, (\delta^2 + \beta^2)^{\Im u}.
\end{align}
Consequently, the whole sum is estimated as
\begin{multline}
	\Biggl| \, \delta \sum_{N_0 + 1 \leq |N| \leq M/\delta} \int_{- \frac{1}{2}}^{\frac{1}{2}} \Bigl [ \Id(\imath \sqrt{\omega_1 \omega_2} \, [N + \beta]) - \Jd(N \delta, \beta) \Bigr] \, d\beta \, \Biggr| \\
	\leq C  \sum_{N_0 + 1 \leq |N| \leq M/\delta} \frac{1}{|N|} \; \; \delta \int_{- \frac{1}{2}}^{\frac{1}{2}} (\delta^2 + \beta^2)^{\Im u}  \, d\beta .
\end{multline}
The sum over $N$ is bounded by the harmonic numbers
\begin{align}
	\sum_{N_0 + 1 \leq |N| \leq M/\delta} \frac{1}{|N|} \leq 2\sum_{N = 2}^{M/\delta} \frac{1}{N} \leq 2\ln \frac{M}{\delta}.
\end{align}
Besides, it is easy to show that the integral over $\beta$ satisfies the bound
\begin{align} \label{dbeta-int}
	\delta \int_{- \frac{1}{2}}^{\frac{1}{2}} (\delta^2 + \beta^2)^{\Im u}  \, d\beta  \leq C_1 \delta^{2(1 + \Im u)} + C_2 \delta \ln \frac{1}{\delta},
\end{align}
see Lemma~\ref{lem:I-bound}. The latter estimates imply the claim~\eqref{sum-IJ}.
\medskip

\noindent \textit{Step 2.} As one can see, formula~\eqref{sum-IJ} excludes the terms $|N| \leq N_0$. Let us show that for these terms we have
\begin{align}
	\lim_{\delta \to 0^+} \delta \int_{- \frac{1}{2}}^{\frac{1}{2}} \Id(\imath \sqrt{\omega_1 \omega_2} \, [N + \beta]) \, d\beta = 0.
\end{align}
By Corollary~\ref{cor:g-ratio-smallN}, for any $N_0 \in \mathbb{Z}_{>0}$ we have the  following
bound for the integrand~\eqref{Id-param}
\begin{align}
	\bigl| \Id(\imath \sqrt{\omega_1 \omega_2}[N + \beta]) \bigr| \leq A \, \bigl| 2\sh \pi (N \delta + 2k \sign(N) \delta + \imath \beta) \bigr|^{2\Im u}
\end{align}
uniformly for all $|N| \leq N_0$, $0 < \delta \leq \dm$ and $|\beta| \leq 1$ with some constants $A, \dm >0$ and $k \in \mathbb{Z}_{>0}$ (here $\sign(0) = 1$). Moreover, since in the above integral $|\beta| \leq 1/2$ and $\Im u \in (-1, 0)$, we have
\begin{align} \label{sh-ineq}
	\bigl| \sh \pi (N \delta + 2k \sign(N) \delta + \imath \beta) \bigr|^{2\Im u} \leq B \, (\delta^2 + \beta^2)^{\Im u}
\end{align}
with some constant $B$. This leads to the bound for the integral
\begin{align}
	\Biggl| \, \delta \int_{- \frac{1}{2}}^{\frac{1}{2}} \Id(\imath \sqrt{\omega_1 \omega_2} \, [N + \beta]) \, d\beta \, \Biggr| \leq C \delta \int_{- \frac{1}{2}}^{\frac{1}{2}} (\delta^2 + \beta^2)^{\Im u} \, d\beta.
\end{align}
Due to the inequality~\eqref{dbeta-int} the right-hand side tends to zero as $\delta \to 0^+$.
\medskip

\noindent \textit{Step 3.} Similarly, for all $|N| \leq N_0$, $N \ne 0$, we have
\begin{align}
	\lim_{\delta \to 0^+} \delta \int_{- \frac{1}{2}}^{\frac{1}{2}} \Jd(N\delta, \beta) \, d\beta = 0.
\end{align}
To show this one should  use the bound~\eqref{Jd-abs}. The rest of the arguments are the same, as in the previous step.
\medskip

\noindent \textit{Step 4.} Combining the results of all previous steps we arrive at the relation
\begin{align}
	\lim_{\delta \to 0^+} \delta \sum_{|N| \leq M/\delta } \int_{- \frac{1}{2}}^{\frac{1}{2}} \Id(\imath \sqrt{\omega_1 \omega_2} \, [N + \beta]) \, d\beta = \lim_{\delta \to 0^+} \delta \sum_{\substack{ |N| \leq M/\delta \\[2pt] N \neq 0}} \int_{- \frac{1}{2}}^{\frac{1}{2}} \Jd(N\delta, \beta) \, d\beta.
\end{align}
The right-hand side represents two Riemann sums (for $N > 0$ and $N < 0$) approximating two integrals
\begin{align} \label{Gd-int}
	\int_{-M}^0 \Gd(\alpha) \, d\alpha + \int_{0}^M \Gd(\alpha) \, d\alpha = \int_{-M}^M \Gd(\alpha) \, d\alpha, \qquad\quad \Gd(\alpha) = \int_{- \frac{1}{2}}^{\frac{1}{2}} \Jd(\alpha, \beta) \, d\beta.
\end{align}
To complete the calculation~\eqref{IJ-calc-lim} we only need the statement
\begin{align} \label{G-approx}
	\lim_{\delta \to 0^+} \delta \sum_{\substack{ |N| \leq M/\delta \\[2pt] N \neq 0}} \Gd(N \delta) = \int_{-M}^M \Gd(\alpha) \, d\alpha.
\end{align}
For proper Riemann integrals this is a general fact, however, such approximation may fail for improper ones depending on the function $\Gd(\alpha)$. In our case
\begin{align}
	|\Gd(\alpha)| \leq \int_{- \frac{1}{2}}^{\frac{1}{2}} \bigl| 2\sh \pi (\alpha + \imath \beta) \bigr|^{2\Im u}  \, d\beta \leq \bigl| 2\sh (\pi \alpha) \bigr|^{2\Im u}.
\end{align}
Hence, the integral over $\beta$ is absolutely convergent for all $\alpha \in \mathbb{R}$ only if $\Im u \in (-1/2, 0)$.

So, depending on $\Im u$, we have either a proper or improper Riemann integral over $\alpha$. To conclude the proof, we check in Appendix~\ref{sec:riem-sum-beta} that the approximation~\eqref{G-approx} holds in the improper case as well.

\section{From hyperbolic to complex rational conical function} \label{sec:conic}

The procedure described above can be also applied to the integrals that cannot be evaluated explicitly. One of such examples is given by the hyperbolic conical function, which we consider in this section.

\subsection{Conical functions}

The classical conical function (up to some inessential factors) represents a special case of the Gauss hypergeometric function ${}_2 F_1(a, b, c; z)$ with some restriction\footnote{There are multiple choices of this restriction due to various transformation formulas for the hypergeometric function.} on the parameters $a, b, c$, see~\cite[\href{http://dlmf.nist.gov/14}{Chapter 14}]{DLMF}.

In~\cite{R2} Ruijsenaars introduced hyperbolic variant of the conical function and derived several integral representations for it. In particular, the representation~\cite[(3.51)]{R2} in our notation reads (use~\eqref{hgamma-refl} and~\eqref{G-Ruij})
\begin{multline}
	\mathcal{R}(\omega_1, \omega_2, 2g; \imath x, \imath \lambda) = \frac{\gamma^{(2)}(4g)}{\gamma^{(2)}(2g \pm \lambda)} \int_{\imath \mathbb{R}} e^{ \frac{2\pi \imath}{\omega_1 \omega_2} \lambda z } \, \gamma^{(2)} \biggl( z \pm \frac{ x}{2} + \frac{\omega_1 + \omega_2}{2} - g\biggr) \\
	\times  \gamma^{(2)} \biggl( - z \pm \frac{ x}{2} + \frac{\omega_1 + \omega_2}{2} - g \biggr) \, \frac{dz}{\imath \sqrt{\omega_1 \omega_2}}.
\end{multline}
Due to the asymptotics~\eqref{hgamma-asymp}, this integral is well defined and the integration contour separates series of poles of gamma functions for $x, \lambda \in \imath \mathbb{R}$ and $0 < \Re g < \Re (\omega_1 + \omega_2)/2$.
In what follows it will be more convenient for us to work with the function
\begin{multline} \label{Psi}
	\Psi_\lambda(x; g) \equiv \Psi_\lambda(x; g , \omega_1, \omega_2) = \int_{\imath \mathbb{R}} e^{ \frac{2\pi \imath}{\omega_1 \omega_2} \lambda z } \, \gamma^{(2)} \biggl( \pm  z + \frac{\omega_1 + \omega_2}{2} - g \biggr) \\
	\times \gamma^{(2)} \biggl( \pm  (z - x) + \frac{\omega_1 + \omega_2}{2} - g \biggr) \, \frac{dz}{\imath \sqrt{\omega_1 \omega_2}},
\end{multline}
which transforms into the previous one after the shift of integration variable $z \to z + x/2$ modulo 
the integral prefactors.

The above function reduces to the Gauss hypergeometric function in the limit $\omega_1 \to 0^+$. Namely, assuming $\omega_1, \omega_2, g> 0$ and using the limiting formula~\eqref{ratio-hgamma-cl-lim} we have
\begin{align} \label{con-cl-lim}
	\lim_{\omega_1 \to 0^+} \sqrt{ \frac{\omega_1}{\omega_2} } \, \Psi_{v \omega_1}(x; u \omega_1) = \int_{\imath \mathbb{R}} e^{\frac{2\pi \imath}{\omega_2} v z} \, \biggl( 2 \cos \frac{\pi z}{\omega_2} \biggr)^{-2u} \, \biggl( 2 \cos \frac{\pi (z - x)}{\omega_2} \biggr)^{-2u}  \, \frac{dz}{\imath \omega_2} ,
\end{align}
where the interchange of the limit and integration can be justified using the uniform bound~\eqref{ratio-hgamma-cl-bound}. The last integral turns into the Euler representation of hypergeometric function
\begin{align}
	{}_2 F_1(a, b, c; w) = \frac{\Gamma(c)}{\Gamma(b)\Gamma(c - b)} \int_0^1 t^{b - 1} (1 - t)^{c - b - 1} (1 - wt)^{-a} \, dt
\end{align}
after the change of variable $t = 1/(1 + e^{2\pi \imath z/\omega_2})$ (in this way we obtain the special case of hypergeometric function with restriction $c = 2a$).

Notice that the function~\eqref{Psi} is similar to the hyperbolic beta integral~\eqref{hgamma-beta}. The difference is that the integrand
\begin{align} \label{Idc}
	\Idc(z) = e^{ \frac{2\pi \imath}{\omega_1 \omega_2} \lambda z } \, \gamma^{(2)} \biggl( \pm  z + \frac{\omega_1 + \omega_2}{2} - g \biggr) \, \gamma^{(2)} \biggl( \pm  (z - x) + \frac{\omega_1 + \omega_2}{2} - g \biggr)
\end{align}
contains two more gamma functions and the additional parameter $x$, which appears in the position similar to
the integration variable $z$. This suggests that the hyperbolic conical function also has a complex rational reduction. To describe it we parametrise, as before, $\omega_1 = \bar{\omega}_2 = \imath + \delta$ and take
\begin{align} \label{glx-param}
	\begin{aligned}
		& g = \imath \sqrt{\omega_1 \omega_2}(m + u \delta), && \hspace{1cm} m \in \mathbb{Z}, && \hspace{0.5cm} \Im u \in (-1, 0), \\[6pt]
		& \lambda = \imath \sqrt{\omega_1 \omega_2}(k + v \delta), && \hspace{1cm} k\in \mathbb{Z}, &&  \hspace{0.5cm}  v \in \mathbb{R}, \\[2pt]
		& x = \imath \sqrt{\omega_1 \omega_2} (K(\delta) + \sigma), && \hspace{1cm} K(\delta) \in \mathbb{Z}, && \hspace{0.5cm} |\sigma| \leq \frac{1}{2},
	\end{aligned}
\end{align}
such that $K(\delta) \delta \to \rho \in \mathbb{R}$ as $\delta \to 0^+$. Then due to~\eqref{hgamma-comp-lim-2} the integrand has the limit
\begin{align}
	\Idc(\imath \sqrt{\omega_1 \omega_2}[N + \beta]) \underset{\substack{\delta \to 0^+\\[2pt] N\delta \to \alpha \; \,}}{=} \Jdc(\alpha, \beta),
\end{align}
where
\begin{multline} \label{Jdc}
	\Jdc(\alpha, \beta) = e^{-2\pi \imath (\alpha v + \beta k)} \, 	\bigl( 2\sh \pi (\alpha + \imath \beta) \; 2\sh \pi (\alpha - \rho + \imath \beta - \imath \sigma) \bigr)^{-m - \imath u} \\[6pt]
	\times \bigl(2\sh \pi (\alpha - \imath \beta)  \; 2\sh \pi (\alpha - \rho - \imath \beta + \imath \sigma) \bigr)^{m - \imath u} .
\end{multline}
Hence, analogously to the case of beta integral~\eqref{hbeta-lim-2}, it is reasonable to expect the following statement.

\begin{theorem}
	Assume that the parameters of the function $\Idc(z) \equiv \Idc(z; x, \lambda, g, \omega_1, \omega_2)$ satisfy the conditions~\eqref{glx-param} and
	\begin{align}
		\omega_1 = \bar{\omega}_2 = \imath + \delta, \qquad \frac{1}{\delta} \in \mathbb{Z}_{>0}, 
		\qquad K(\delta) = \biggl\lfloor \frac{\rho}{\delta} \biggr\rfloor, \qquad \rho \ne 0.
	\end{align}
	Then the following limit holds
	\begin{align} \label{con-comp-lim}
		\lim_{\delta \to 0^+} \delta \int_{\imath \mathbb{R}} \Idc(z) \, \frac{dz}{\imath \sqrt{\omega_1 \omega_2}} = \int_{\mathbb{R}} d\alpha \int_{-\frac{1}{2}}^{\frac{1}{2}} \Jdc(\alpha, \beta) \, d\beta.
	\end{align}
\end{theorem}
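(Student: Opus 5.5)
We follow the same scheme as for the beta integral. First we split the contour $\imath\mathbb{R}$ into unit pieces. Then we interchange the limits $M\to\infty$ and $\delta\to0^+$. Finally we evaluate the truncated sum as a Riemann sum. The new feature is that $\Idc$ contains two ratios of gamma functions. The first ratio pinches near $N=0$ and the second near $N=K(\delta)$, so there are two singular regions, located at $\alpha=0$ and $\alpha=\rho$, and they are separated because $\rho\neq0$.

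Concretely, write
\begin{align}
\int_{\imath\mathbb{R}}\Idc(z)\,\frac{dz}{\imath\sqrt{\omega_1\omega_2}}=\sum_{N\in\mathbb{Z}}\int_{-\frac12}^{\frac12}\Idc(\imath\sqrt{\omega_1\omega_2}[N+\beta])\,d\beta .
\end{align}
Using the reflection formula, factor $\Idc(\imath\sqrt{\omega_1\omega_2}[N+\beta])$ as the exponential times two gamma ratios of the form appearing in~\eqref{Id-param}: one in the variable $N+\beta$, the other in $N-K+\beta-\sigma$. The second ratio has the same $\omega$- and $g$-dependence, so every bound from the Appendix applies to it after the shift $N\mapsto N-K$.

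For the tails $|N|\geq M/\delta$ with $M>2|\rho|$, we use Corollary~\ref{cor:g-ratio-bigN} on both factors; this needs $|N|\ge 1/\delta$ and $|N-K|\ge1/\delta$, which hold once $M>|\rho|+1$. It gives
\begin{align}
|\Idc|\le C\,|2\sh\pi(N\delta+\imath\beta)|^{2\Im u}\,|2\sh\pi((N-K)\delta+\imath(\beta-\sigma))|^{2\Im u}.
\end{align}
This decays like $e^{-4\pi|N|\delta|\Im u|}$, and the same geometric-series argument as in Section~\ref{sec:beta-lim-switch} yields uniform tails $C_M\to0$.

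For the truncated sum we split the range of $N$ into three parts: a window $|N|\le N_0$, a window $|N-K|\le N_0$, and the remaining $N$.
\begin{itemize}
\item \textbf{Remaining $N$.} Both factors satisfy the hypotheses of Proposition~\ref{prop:g-ratio}\textit{(ii)}. Lemma~\ref{lem:F-prod}, applied to the product of four ratios, gives
\begin{align}
|\Idc-\Jdc(N\delta,\beta)|\le C\delta\Bigl(\frac1{1-e^{-C_2|N|\delta}}+\frac1{1-e^{-C_2|N-K|\delta}}\Bigr)|\Jdc|.
\end{align}
\item \textbf{Bounding $\Jdc$.} Near $N\approx0$ the second sh-factor is bounded above and below, since $|\alpha-\rho|$ stays away from $0$; near $N\approx K$ the first one is. Hence $|\Jdc|\le C[(\delta^2+\beta^2)^{\Im u}+(\delta^2+(\beta-\sigma)^2)^{\Im u}]$, with exactly one of the two terms relevant in each region. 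Summing with harmonic-number bounds and Lemma~\ref{lem:I-bound} gives a vanishing error, as in Step~1 of Section~\ref{sec:beta-lim-calc}.
\item \textbf{The two windows.} In each window we use Corollary~\ref{cor:g-ratio-smallN} for the pinching factor and the uniformly bounded non-pinching factor, then Step~2's estimate. Each window contributes $o(1)$ after multiplication by $\delta$.
\item \textbf{Corresponding terms of $\Jdc$.} Step~3 handles the $\Jdc$ terms in the windows. We must take care of $\beta=\sigma$ inside the second window and of the non-integer offset $K\delta-\rho=O(\delta)$. Since $\Jdc$ is continuous in $\rho$ away from the singular points, replacing $\rho$ by $K\delta$ costs $o(1)$.
\end{itemize}

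The last and, I expect, hardest step is the Riemann-sum convergence
\begin{align}
\delta\sum_{N}\Gdc(N\delta)\to\int_{-M}^M\Gdc(\alpha)\,d\alpha,\qquad \Gdc(\alpha)=\int_{-\frac12}^{\frac12}\Jdc(\alpha,\beta)\,d\beta .
\end{align}
Here $\Gdc$ is improper at two points, $\alpha=0$ and $\alpha=\rho$, with singularities of order $|\alpha|^{1+2\Im u}$ and $|\alpha-\rho|^{1+2\Im u}$ when $\Im u<-1/2$. The grid points $N\delta$ hit $0$ but are misaligned with $\rho$ by a nonzero fractional part. I would split $[-M,M]$ into neighbourhoods of the two singular points and their complement. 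On the complement, standard Riemann-sum convergence applies. Near each singular point, I would adapt the argument of Appendix~\ref{sec:riem-sum-beta}: bound $\delta\sum_{|N\delta-\rho|<\eta}|\Gdc(N\delta)|$ by a constant times $\int_{|\alpha-\rho|<2\eta}|\alpha-\rho|^{1+2\Im u}d\alpha$, uniformly in the offset, which is small since $1+2\Im u>-1$. This uniformity in the grid offset is the new point to check. Combining everything and letting $M\to\infty$ gives~\eqref{con-comp-lim}.
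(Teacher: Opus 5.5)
Your architecture matches the paper's: unit cells in $\beta$, tails via Corollary~\ref{cor:g-ratio-bigN} on both ratios, windows $|N|\le N_0$ and $|N-K|\le N_0$ via Corollary~\ref{cor:g-ratio-smallN}, then a Riemann-sum step. However, the handling of the grid points next to $\alpha=\rho$ fails as written.

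Put $\mu=\rho/\delta-K\in[0,1)$, so that $K\delta-\rho=-\mu\delta$ and $(K+1)\delta-\rho=(1-\mu)\delta$. Nothing keeps $\mu$ away from $0$ or $1$. For $\rho=1$ you have $\mu=0$ for every $\delta=1/n$, and the same happens along a subsequence for any rational $\rho$. For irrational $\rho$ the values of $\mu$ accumulate at both $0$ and $1$. For $\Im u<-\frac12$, the singularity at $\beta=\sigma$ gives
\begin{align*}
\delta\int_{-\frac12}^{\frac12}\bigl|\Jdc(K\delta,\beta)\bigr|\,d\beta\;\asymp\;\delta\,(\mu\delta)^{1+2\Im u}.
\end{align*}
This is $+\infty$ at $\mu=0$ and unbounded as $\mu\to0$; the same happens for $N=K+1$ as $\mu\to1$. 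Consequently three of your claims fail:
\begin{itemize}
\item that the grid is misaligned with $\rho$ ``by a nonzero fractional part'';
\item the proposed bound of $\delta\sum_{|N\delta-\rho|<\eta}|\Gdc(N\delta)|$ by $\int_{|\alpha-\rho|<2\eta}|\alpha-\rho|^{1+2\Im u}\,d\alpha$ uniformly in the offset;
\item the use of Step~3 for the $\Jdc$-terms with $N=K,K+1$.
\end{itemize}

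The missing step is to delete $N=K$ and $N=K+1$ (together with $N=0$) from the $\Jdc$-side only. On the $\Idc$-side these terms lie in the window $|N-K|\le N_0$ and vanish by your Corollary~\ref{cor:g-ratio-smallN} estimate. Every retained point then satisfies $N\delta-\rho\ge(N-K-1)\delta\ge\delta$ or $\rho-N\delta\ge(K-N)\delta\ge\delta$. That lower bound is exactly what makes the comparison with the improper integral uniform in $\mu$. This is why the paper uses the three sums $\sum_{-M/\delta}^{-1}+\sum_{1}^{K-1}+\sum_{K+2}^{M/\delta}$ in Appendix~\ref{sec:riem-sum-con}.

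A smaller point concerns replacing $K\delta$ by $\rho$. Proposition~\ref{prop:g-ratio}\textit{(ii)}, applied to the second ratio, compares $\Idc$ with $\Jdc$ in which $\rho$ is replaced by $K\delta$. So your ``Remaining $N$'' bound also needs the replacement $K\delta\to\rho$ in the range $N_0<|N-K|\ll 1/\delta$. There, termwise continuity in $\rho$ is not uniform and does not give $o(1)$: at $\beta=\sigma$ the relevant ratio of sines is about $(N-K-\mu)/(N-K)$, an $O(1/|N-K|)$ perturbation. What works is Lemma~\ref{lem:ln} with this $\mu$ (and its mirror version for $N<K$). It gives a relative error at most $D_1\delta/(1-e^{-D_2|N\delta-\rho|})$, of the same shape as your other errors, which the harmonic sum and Lemma~\ref{lem:I-bound} then absorb.

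Finally, for $|\sigma|=\frac12$ your bound $(\delta^2+(\beta-\sigma)^2)^{\Im u}$ misses the singularity at $\beta-\sigma=\pm1$. Before estimating, use periodicity to shift the $\beta$-interval to $[\sigma-\frac12,\sigma+\frac12]$.
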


The proof is given in Sections~\ref{sec:con-outline}--\ref{sec:con-lim-calc}, but before that let us make a few remarks. First, the resulting integral on the right
\begin{multline} \label{Phi}
	\Phi_{v, k}(\rho, \sigma; u, m) = \int_{\mathbb{R}} d\alpha \int_{-\frac{1}{2}}^{\frac{1}{2}} e^{-2\pi \imath (\alpha v + \beta k)} \, 	\bigl( 2\sh \pi (\alpha + \imath \beta) \; 2\sh \pi (\alpha - \rho + \imath \beta - \imath \sigma) \bigr)^{-m - \imath u} \\
	\times \bigl(2\sh \pi (\alpha - \imath \beta)  \; 2\sh \pi (\alpha - \rho - \imath \beta + \imath \sigma) \bigr)^{m - \imath u} \, d\beta
\end{multline}
is essentially the conical function of complex rational type. After the change of variable $z = 1/(1 - e^{2\pi(\alpha + \imath \beta)})$ it transforms into the special case of the hypergeometric function associated with the complex field (modulo some integral prefactors)
\begin{align}
	{}_2F_1^{\mathbb{C}}(a|a', b|b', c|c'; w,\bar{w}) = \frac{\bm{\Gamma}(c|c')}{\pi \bm{\Gamma}(b|b') \bm{\Gamma}(c - b| c' - b')} \int_{\mathbb{C}} [z]^{b - 1} [1 - z]^{c - b - 1} [1 - w z]^{-a} \, d^2z,
\end{align}
as introduced in~\cite[Section 6.6]{GGR}. Here we assume that $a - a', b - b', c - c' \in \mathbb{Z}$, and the integral converges in some domain of parameters, see~\cite[Section 3]{MN}.

Secondly, we remark that formula~\eqref{con-comp-lim} in fact follows from the results of our previous paper~\cite{BSS2}. The hyperbolic conical function can be interpreted as the eigenfunction of hyperbolic Ruijsenaars system 
Hamiltonians in the case of two particles (see~\cite[Section~4]{HR}), and in~\cite{BSS2} we have considered complex rational limit of this system. Correspondingly, the claim~\eqref{con-comp-lim} follows from the combination of formulas~\cite[(3.34), (4.30), (5.8)]{BSS2} and~\eqref{hgamma-comp-lim-1}.

However, this reasoning is non-direct: to establish the limit~\eqref{con-comp-lim} in~\cite{BSS2} we pass to the ``dual'' Mellin--Barnes integral representations of the corresponding functions. So, it is desirable to give a more straightforward proof, which is done in the next sections.

Finally, let us comment about the assumptions on the parameters. The conditions on $\delta, v, u$ have the same reasons, as in the situation of beta integral. The case of arbitrary $K(\delta)$, such that $K(\delta) \delta \to \rho$, requires only inessential modifications, which complicate the exposition, so for brevity we fix $K(\delta)$. Besides, if $\rho = 0$, then nothing changes in the calculation of the limit in comparison with the beta integral, however, if additionally $\sigma = 0$ one must assume a stronger condition $\Im u \in (-1/2, 0)$ for the limiting integral to be absolutely convergent.

\subsection{Outline} \label{sec:con-outline}

Let us describe the main steps of reduction~\eqref{con-comp-lim}. In what follows we assume $\rho > 0$ without loss of generality due to the symmetry $\Psi_\lambda(x; g) = \Psi_{-\lambda}(-x; g)$.

As before, the first step is to transform the integral~\eqref{Psi} into the series
\begin{align}
	\int_{\imath \mathbb{R}} \Idc(z) \, \frac{dz}{\imath \sqrt{\omega_1 \omega_2}} = \sum_{N \in \mathbb{Z}} \int_{- \frac{1}{2}}^{\frac{1}{2}} \Idc(\imath \sqrt{\omega_1 \omega_2} \, [N + \beta]) \, d\beta.
\end{align}
In the limit $\delta \to 0^+$ poles of the integrand~\eqref{Idc} pinch the integration contour at the points $\imath \mathbb{Z}$ and $\imath (\mathbb{Z} + \sigma)$, however, there is no pinching for the points sufficiently far away (e.g., $|z| \geq \sqrt{\omega_1 \omega_2} (\rho + 1)/\delta$). Hence, we cut off the series at $|N| = M/\delta$ ($M \in \mathbb{Z}_{>0}$) and prove that one can interchange two limits
\begin{align} 
\label{con-lim-switch}
\Bigl(\lim_{\delta \to 0^+} \,  \lim_{M \to \infty}- \lim_{M \to \infty} \, \lim_{\delta \to 0^+} \Bigr) \; \delta
 \sum_{|N| \leq M/\delta} \int_{- \frac{1}{2}}^{\frac{1}{2}} \Idc(\imath \sqrt{\omega_1 \omega_2} 
 \, [N + \beta]) \, d\beta =0,
\end{align} 
see Section~\ref{sec:con-lim-switch}.

Next, in Section~\ref{sec:con-lim-calc} we calculate the $\delta$-limit
\begin{align}\label{limit_con}
	\lim_{\delta \to 0^+} \delta \sum_{|N| \leq M/\delta} \int_{- \frac{1}{2}}^{\frac{1}{2}} \Idc(\imath \sqrt{\omega_1 \omega_2} \, [N + \beta]) \, d\beta = \int_{-M}^M d\alpha \int_{-\frac{1}{2}}^{\frac{1}{2}} \Jdc(\alpha, \beta) \, d\beta
\end{align}
for large enough $M$. For this we first show that
\begin{multline}
	\lim_{\delta \to 0^+} \delta \sum_{|N| \leq M/\delta} \int_{- \frac{1}{2}}^{\frac{1}{2}} \Idc(\imath \sqrt{\omega_1 \omega_2} \, [N + \beta]) \, d\beta \\
	= \lim_{\delta \to 0^+} \delta \Biggl( \sum_{N = -M/\delta}^{-1} + \sum_{N = 1}^{\lfloor \rho/\delta \rfloor - 1} + \sum_{N = \lfloor \rho/\delta \rfloor + 2}^{M/\delta} \Biggr) \int_{- \frac{1}{2}}^{\frac{1}{2}} \Jdc(N\delta, \beta) \, d\beta.
\end{multline}
Notice the difference with the beta integral case~\eqref{IJ-sums}. If $\Im u \leq -1/2$ the function $\Jdc(\alpha, \beta)$ is not integrable in $\beta$ for $\alpha = 0$ and $\alpha = \rho$, see~\eqref{Jdc}. To avoid these singular points we exclude the terms $N = 0$, $N = \lfloor \rho/\delta\rfloor$ and $N = \lfloor \rho/\delta\rfloor + 1$ on the right-hand side (both $\lfloor \rho/\delta\rfloor$ and $\lfloor \rho/\delta\rfloor + 1$ can be arbitrarily close to $\rho/\delta$ as $\delta \to 0^+$).

The last step is to prove that the right-hand side approximates the Riemann integral in $\alpha$
\begin{align}
	\lim_{\delta \to 0^+} \delta \Biggl( \sum_{N = -M/\delta}^{-1} + \sum_{N = 1}^{\lfloor \rho/\delta \rfloor - 1} + \sum_{N = \lfloor \rho/\delta \rfloor + 2}^{M/\delta} \Biggr) \int_{- \frac{1}{2}}^{\frac{1}{2}} \Jdc(N\delta, \beta) \, d\beta = \int_{-M}^M d\alpha \int_{-\frac{1}{2}}^{\frac{1}{2}} \Jdc(\alpha, \beta) \, d\beta.
\end{align}
This statement is intuitively clear, while the detailed analysis is given in Appendix~\ref{sec:riem-sum-con}.
Taking $M \to \infty$ we complete the reduction~\eqref{con-comp-lim}.

\subsection{Interchanging limits} \label{sec:con-lim-switch}
In this part there is almost nothing new in comparison to the beta integral case. To interchange two limits~\eqref{con-lim-switch} it is sufficient to estimate the tails
\begin{align}
	\Biggl| \, \delta  \sum_{| N| \geq M/\delta} \, \int_{- \frac{1}{2}}^{\frac{1}{2} }  \Idc \bigl(\imath \sqrt{\omega_1 \omega_2} \, [N + \beta] \bigr) \, d\beta \, \Biggr| \leq C_M
\end{align}
by constants $C_M$ independent of $\delta$ and vanishing as $M \to \infty$.

With the parametrisation~\eqref{glx-param} the integrand~\eqref{Idc} has the following form (after application 
of the reflection relation~\eqref{hgamma-refl})
\begin{multline}
	\Idc(\imath \sqrt{\omega_1 \omega_2}[N + \beta]) = e^{-2\pi \imath (N \delta v + \beta k + \beta v \delta )} \, \frac{\gamma^{(2)}\bigl(\imath\sqrt{\omega_1 \omega_2} [N + \beta - m - (u + \imath) \delta + \epsilon(\delta) \delta^2 ]\bigr)}{ \gamma^{(2)}\bigl(\imath\sqrt{\omega_1 \omega_2} [N + \beta + m + (u - \imath) \delta + \epsilon(\delta)\delta^2] \bigr) } \\[6pt]
	\times \frac{\gamma^{(2)}\bigl(\imath\sqrt{\omega_1 \omega_2} [N - \lfloor \rho/\delta \rfloor + \beta - \sigma - m - (u + \imath) \delta + \epsilon(\delta) \delta^2 ]\bigr)}{ \gamma^{(2)}\bigl(\imath\sqrt{\omega_1 \omega_2} [N - \lfloor \rho/\delta \rfloor + \beta - \sigma + m + (u - \imath) \delta + \epsilon(\delta)\delta^2] \bigr) } ,
\end{multline}
where $\epsilon(\delta) = \imath (1 - 1/\sqrt{1 + \delta^2} ) / \delta$. For $|N| \geq M/\delta \geq (\rho + 1)/\delta$ we have
\begin{align}
	\biggl| N - \biggl\lfloor \frac{\rho}{\delta} \biggr\rfloor \biggr| \geq \frac{M}{\delta} - \biggl\lfloor \frac{\rho}{\delta} \biggr\rfloor  \geq \frac{1}{\delta}.
\end{align}
Hence, we can use Corollary~\ref{cor:g-ratio-bigN} (with $\nu = 1$) to obtain the bound
\begin{align}
	\bigl| \Idc(\imath \sqrt{\omega_1 \omega_2}[N + \beta]) \bigr| \leq C \, \bigl| 2\sh \pi (N \delta + \imath \beta) \; 2\sh \pi (N - \lfloor \rho/\delta \rfloor \delta + \imath \beta - \imath \sigma) \bigr|^{2\Im u}
\end{align}
uniform in $N, \delta, \beta$ inside the considered intervals. Furthermore, for sufficiently large $|N| \delta \geq M$
\begin{align}
	\bigl| 2\sh \pi (N \delta + \imath \beta) \; 2\sh \pi (N - \lfloor \rho/\delta \rfloor \delta + \imath \beta - \imath \sigma) \bigr| \geq \frac{1}{2} e^{\pi (|N| + |N - \lfloor \rho/\delta \rfloor|)\delta} \geq \frac{1}{2} e^{2\pi |N| \delta - \pi \rho}.
\end{align}
Since $\Im u \in (-1, 0)$ we therefore have
\begin{align}
	\Biggl| \, \delta  \sum_{| N| \geq M/\delta} \, \int_{- \frac{1}{2}}^{\frac{1}{2} }  \Idc \bigl(\imath \sqrt{\omega_1 \omega_2} \, [N + \beta] \bigr) \, d\beta \, \Biggr| \leq C\delta \sum_{|N| \geq M/\delta} e^{-4\pi |N| \delta | \Im u |} \leq C' e^{-4\pi | \Im u | M},
\end{align}
which implies the claim.

\subsection{Calculating the $\delta$-limit} \label{sec:con-lim-calc}
In this section we calculate the limit
\eqref{limit_con} 
assuming sufficiently large $M$. Overall, the calculation is quite similar to that for the beta integral. It does, however, require a slightly more intricate manipulation of the bounds from Appendix~\ref{sec:ratios-gamma}, so we provide the details.

As shown in Appendix~\ref{sec:riem-sum-con}, due to the singularities at $\alpha = 0$ and $\alpha = \rho$, in general case the right-hand side of  
\eqref{limit_con} 
is approximated by three (Riemann-type) sums
\begin{align} \label{J-int-riem}
	\lim_{\delta \to 0^+} \delta \Biggl( \sum_{N = -M/\delta}^{-1} + \sum_{N = 1}^{\lfloor \rho/\delta \rfloor - 1} + \sum_{N = \lfloor \rho/\delta \rfloor + 2}^{M/\delta} \Biggr) \int_{- \frac{1}{2}}^{\frac{1}{2}} \Jdc(N\delta, \beta) \, d\beta = \int_{-M}^M d\alpha \int_{-\frac{1}{2}}^{\frac{1}{2}} \Jdc(\alpha, \beta) \, d\beta.
\end{align}
It remains to analyse the difference between the left-hand sides 
of \eqref{limit_con} and~\eqref{J-int-riem}.
\medskip

\noindent \textit{Step 1.} Take $N_0 = |m| + 3$ and consider the terms away from $N = 0$ and $N = \lfloor \rho/\delta \rfloor$ by at least $N_0 + 1$. For them we prove that
\begin{multline} \label{IJ-sum-N0}
	\lim_{\delta \to 0^+} \delta \Biggl( \sum_{N = -M/\delta}^{-N_0 - 1} + \sum_{N = N_0 + 1}^{\lfloor \rho/\delta \rfloor - 1 - N_0} + \sum_{N = \lfloor \rho/\delta \rfloor + 1 + N_0}^{M/\delta} \Biggr) \\[6pt]
	\times \int_{- \frac{1}{2}}^{\frac{1}{2}} \bigl[ \Idc(\imath \sqrt{\omega_1 \omega_2} \, [N + \beta]) - \Jdc(N\delta, \beta) \bigr] \, d\beta = 0.
\end{multline}
Let us consider the last sum, the arguments for the remaining two are analogous.

First, we show that
\begin{align} \label{Idc-Jdc-ratio}
	\Biggl| \frac{ \Idc(\imath \sqrt{\omega_1 \omega_2} \, [N + \beta]) }{ \Jdc(N\delta, \beta) } - 1 \Biggr| \leq \frac{C_1 \delta}{1 - e^{-C_2 (N \delta - \rho)}}
\end{align}
with some constants $C_1, C_2 > 0$ uniform in $N, \delta, \beta$ in the considered intervals. Let us write $\Jdc(\alpha, \beta) \equiv \Jdc(\alpha, \beta; \rho)$ to emphasize the dependence on the parameter $\rho$, see~\eqref{Jdc}. Factorise the expression in question
\begin{align}
	\frac{ \Idc(\imath \sqrt{\omega_1 \omega_2} \, [N + \beta]) }{ \Jdc(N\delta, \beta; \rho) } = \frac{ \Idc(\imath \sqrt{\omega_1 \omega_2} \, [N + \beta]) }{ \Jdc(N\delta, \beta; \lfloor \rho/\delta \rfloor \delta) } \; \frac{\Jdc(N\delta, \beta; \lfloor \rho/\delta \rfloor \delta) }{ \Jdc(N\delta, \beta; \rho) }.
\end{align}
Since in the sum $N - \lfloor \rho/\delta \rfloor \geq N_0 + 1$ and $|\beta - \sigma| \leq 1$, we can use Proposition~\ref{prop:g-ratio}\textit{(ii)} together with Lemma~\ref{lem:F-prod} for the first ratio
\begin{align} \label{Idc-Jdc-ratio-2}
	\Biggl| \frac{ \Idc(\imath \sqrt{\omega_1 \omega_2} \, [N + \beta]) }{ \Jdc(N\delta, \beta; \lfloor \rho/\delta \rfloor \delta) } - 1 \Biggr| \leq \frac{C_1 \delta}{1 - e^{-C_2 (N - \lfloor \rho/\delta \rfloor)\delta}} \leq \frac{C_1 \delta}{1 - e^{-C_2 (N \delta - \rho)}}.
\end{align}
The second ratio explicitly reads
\begin{align} \label{Jdc-ratio}
	\begin{aligned}
		\frac{\Jdc(N\delta, \beta; \lfloor \rho/\delta \rfloor \delta) }{ \Jdc(N\delta, \beta; \rho) } & = \biggl( \frac{2\sh \pi (N \delta - \rho + \imath \beta - \imath \sigma)}{2\sh \pi (N \delta - \lfloor\rho/\delta\rfloor \delta + \imath \beta - \imath \sigma)} \biggr)^{m + \imath u} \\[6pt]
		& \times  \biggl( \frac{2\sh \pi (N \delta - \rho - \imath \beta + \imath \sigma)}{2\sh \pi (N \delta - \lfloor\rho/\delta\rfloor \delta - \imath \beta + \imath \sigma)} \biggr)^{-m + \imath u}.
	\end{aligned}
\end{align}
Rewrite the first ratio of sines
\begin{align}
	\frac{2\sh \pi (N \delta - \rho + \imath \beta - \imath \sigma)}{2\sh \pi (N \delta - \lfloor\rho/\delta\rfloor \delta + \imath \beta - \imath \sigma)} = e^{\pi (\lfloor \rho/\delta \rfloor \delta - \rho)} \, \frac{ 1 - e^{-2\pi (N \delta - \rho + \imath \beta - \imath \sigma)} }{ 1 - e^{-2\pi (N \delta - \lfloor\rho/\delta\rfloor \delta + \imath \beta - \imath \sigma)} },
\end{align}
and analogously for the second one. By Lemma~\ref{lem:ln} (with $\mu = \rho/\delta - \lfloor\rho/\delta\rfloor$),
\begin{multline}
	\Bigl| \ln \bigl( 1 - e^{-2\pi (N \delta - \rho + \imath \tau)} \bigr) - \ln \bigl( 1 - e^{-2\pi (N \delta - \lfloor\rho/\delta\rfloor \delta + \imath \tau)} \bigr) \Bigr| \\[6pt]
	\leq \frac{2\pi(\rho/\delta - \lfloor\rho/\delta\rfloor) \delta}{1 - e^{-2\pi (N \delta- \rho)}} \leq \frac{2\pi \delta}{1 - e^{-2\pi (N \delta - \rho)}}.
\end{multline}
Using this inequality together with Lemmas~\ref{lem:exp},~\ref{lem:F-prod} we conclude that the second ratio given by~\eqref{Jdc-ratio} admits the same bound, as the first one,
\begin{align} \label{Jdc-ratio-2}
	\Biggl| \frac{\Jdc(N\delta, \beta; \lfloor \rho/\delta \rfloor \delta) }{ \Jdc(N\delta, \beta; \rho) } - 1 \Biggr| \leq \frac{D_1 \delta}{1 - e^{-D_2 (N \delta - \rho)}}
\end{align}
with some $D_1, D_2>0$. Thus, from the bounds~\eqref{Idc-Jdc-ratio-2} and~\eqref{Jdc-ratio-2}, using again Lemma~\ref{lem:F-prod}, we obtain the estimate~\eqref{Idc-Jdc-ratio}.

As a result, we have the bound
\begin{multline}
	\Biggl| \delta \sum_{N = \lfloor \rho/\delta \rfloor + 1 + N_0}^{M/\delta} \int_{- \frac{1}{2}}^{\frac{1}{2}} \bigl[ \Idc(\imath \sqrt{\omega_1 \omega_2} \, [N + \beta]) - \Jdc(N\delta, \beta) \bigr] \, d\beta \Biggr| \\
	\leq   \sum_{N = \lfloor \rho/\delta \rfloor + 1 + N_0}^{M/\delta} \frac{C_1 \delta}{1 - e^{-C_2 (N\delta - \rho)}} \; \delta \int_{- \frac{1}{2}}^{\frac{1}{2}}  \bigl| \Jdc(N\delta, \beta)\bigr| d\beta.
\end{multline}
The function $\Jdc(N\delta, \beta)$ is $1$-periodic in $\beta$, so in the above integral we can change 
the integration domain to $[\sigma - 1/2, \sigma + 1/2]$. Furthermore, from the definition~\eqref{Jdc} we have
\begin{align} \label{Jdc-abs}
	\bigl| \Jdc(N\delta, \beta) \bigr| = \bigl| 2\sh\pi(N\delta + \imath \beta) \; 2\sh \pi (N\delta - \rho + \imath\beta - \imath \sigma) \bigr|^{2 \Im u}.
\end{align}
The inequalities
\begin{align}
	& \bigl| 2\sh\pi(N\delta + \imath \beta) \bigr| \geq A, \qquad \bigl| 2\sh \pi (N\delta - \rho + \imath\beta - \imath \sigma)  \bigr|^2 \geq B(\delta^2 + (\beta - \sigma)^2)
\end{align}
with some constants $A, B$ hold for all $N \geq \lfloor \rho/\delta \rfloor + 1 + N_0$ and $\beta \in [\sigma - 1/2, \sigma + 1/2]$. Hence, the sum and integral can be separated
\begin{multline}
	\Biggl| \delta \sum_{N = \lfloor \rho/\delta \rfloor + 1 + N_0}^{M/\delta} \int_{- \frac{1}{2}}^{\frac{1}{2}} \bigl[ \Idc(\imath \sqrt{\omega_1 \omega_2} \, [N + \beta]) - \Jdc(N\delta, \beta) \bigr] \, d\beta \Biggr| \\
	\leq  C \sum_{N = \lfloor \rho/\delta \rfloor + 1 + N_0}^{M/\delta} \frac{\delta}{1 - e^{-C_2 (N\delta - \rho)}} \; \delta \int_{- \frac{1}{2}}^{\frac{1}{2}}  (\delta^2 + \beta^2)^{\Im u} d\beta.
\end{multline}
The sum is bounded by the harmonic numbers
\begin{multline}
	\sum_{N = \lfloor \rho/\delta \rfloor + 1 + N_0}^{M/\delta} \frac{\delta}{1 - e^{-C_2 (N\delta - \rho)}} = \sum_{N = \lfloor \rho/\delta \rfloor + 1 + N_0}^{M/\delta} \frac{N\delta - \rho}{1 - e^{-C_2 (N\delta - \rho)}} \, \frac{1}{N - \rho/\delta} \\
	\leq C \sum_{N = \lfloor \rho/\delta \rfloor + 1 + N_0}^{M/\delta}  \frac{1}{N - \rho/\delta} \leq C \sum_{n = 2}^{M/\delta} \frac{1}{n} \leq C \ln \frac{M}{\delta}.
\end{multline}
Combining this with the bound on the $\beta$-integral given by Lemma~\ref{lem:I-bound} we come to the claim.
\medskip

\noindent \textit{Step 2.} Formula~\eqref{IJ-sum-N0} excludes the terms $|N| \leq N_0$ and $|N - \lfloor \rho/\delta \rfloor | \leq N_0$. Let us prove that for them we have
\begin{align}
	\lim_{\delta \to 0^+} \delta \int_{- \frac{1}{2}}^{\frac{1}{2}} \Idc(\imath \sqrt{\omega_1 \omega_2} \, [N + \beta]) \, d\beta = 0.
\end{align}
Consider the case $|N| \leq N_0$, arguments for the remaining values $|N - \lfloor \rho/\delta \rfloor | \leq N_0$ are the same. In this case the integrand
\begin{multline}
	\bigl| \Idc(\imath \sqrt{\omega_1 \omega_2} \, [N + \beta]) \bigr| = \Biggl|  \frac{\gamma^{(2)}\bigl(\imath\sqrt{\omega_1 \omega_2} [N + \beta - m - (u + \imath) \delta + O(\delta^2) ]\bigr)}{ \gamma^{(2)}\bigl(\imath\sqrt{\omega_1 \omega_2} [N + \beta + m + (u - \imath) \delta + O(\delta^2)] \bigr) } \\[6pt]
	\times \frac{\gamma^{(2)}\bigl(\imath\sqrt{\omega_1 \omega_2} [N - \lfloor \rho/\delta \rfloor + \beta - \sigma - m - (u + \imath) \delta + O(\delta^2) ]\bigr)}{ \gamma^{(2)}\bigl(\imath\sqrt{\omega_1 \omega_2} [N - \lfloor \rho/\delta \rfloor + \beta - \sigma + m + (u - \imath) \delta + O(\delta^2) ] \bigr) }  \Biggr|,
\end{multline}
can be bounded using Corollaries~\ref{cor:g-ratio-bigN} and~\ref{cor:g-ratio-smallN} for the second and the first ratios correspondingly. Namely, from them we have
\begin{multline}
	\bigl| \Idc(\imath \sqrt{\omega_1 \omega_2} \, [N + \beta]) \bigr| \leq C \bigl| 2\sh \pi(N\delta + 2k \sign(N) \delta + \imath \beta) \bigr|^{2\Im u} \\[6pt]
	\times \bigl| 2\sh \pi(N \delta - \lfloor \rho/\delta \rfloor \delta + \imath \beta - \imath \sigma ) \bigr|^{2\Im u}
\end{multline}
with some $C$ and $k \in \mathbb{Z}_{>0}$ for all $N, \delta, \beta$ (here $\sign(0) = 1$). 
Under taken restrictions on the parameters and for a sufficiently small $\delta$ the hyperbolic sines satisfy the bounds
\begin{align}
	& \bigl| 2\sh \pi(N\delta + 2k \sign(N) \delta + \imath \beta) \bigr|^2 \geq A (\delta^2 + \beta^2), \\[6pt]
	& \bigl| 2\sh \pi(N \delta - \lfloor \rho/\delta \rfloor \delta + \imath \beta - \imath \sigma ) \bigr| \geq \bigl| 2 \sh \pi (N - \lfloor \rho/\delta \rfloor) \delta \bigr| \geq B
\end{align}
with some constants $A, B$. Therefore,
\begin{align}
	\biggl|  \delta \int_{- \frac{1}{2}}^{\frac{1}{2}} \Idc(\imath \sqrt{\omega_1 \omega_2} \, [N + \beta]) \, d\beta  \biggr| \leq C \delta \int_{- \frac{1}{2}}^{\frac{1}{2}} (\delta^2 + \beta^2)^{\Im u}.
\end{align}
The right-hand side tends to zero as $\delta \to 0^+$ due to Lemma~\ref{lem:I-bound}.
\medskip

\noindent \textit{Step 3.} Analogously, for $|N| \leq N_0$ or $|N - \lfloor \rho/\delta \rfloor | \leq N_0$ with $N \ne 0, \lfloor \rho/\delta \rfloor, \lfloor \rho/\delta \rfloor + 1$ we have
\begin{align}
	\lim_{\delta \to 0^+} \delta \int_{- \frac{1}{2}}^{\frac{1}{2}} \Jdc(N\delta, \beta) \, d\beta = 0.
\end{align}
To prove it use~\eqref{Jdc-abs}, the rest of arguments are the same, as at the previous step. Let us remark that we exclude $N = 0, \lfloor \rho/\delta \rfloor, \lfloor \rho/\delta \rfloor + 1$ to have $|N| \geq 1$ and $|N - \rho/\delta| \geq 1$, that is to avoid singular points $\alpha = 0, \rho$ of the function $\Jdc(\alpha, \beta)$.
\medskip

\noindent \textit{Step 4.} Combining the results of all the previous steps we arrive at the equality
\begin{multline}
	\lim_{\delta \to 0^+} \delta \sum_{|N| \leq M/\delta} \int_{- \frac{1}{2}}^{\frac{1}{2}} \Idc(\imath \sqrt{\omega_1 \omega_2} \, [N + \beta]) \, d\beta \\
	= \lim_{\delta \to 0^+} \delta \Biggl( \sum_{N = -M/\delta}^{-1} + \sum_{N = 1}^{\lfloor \rho/\delta \rfloor - 1} + \sum_{N = \lfloor \rho/\delta \rfloor + 2}^{M/\delta} \Biggr) \int_{- \frac{1}{2}}^{\frac{1}{2}} \Jdc(N\delta, \beta) \, d\beta.
\end{multline}
Together with the approximation identity~\eqref{J-int-riem} this yields the statement~\eqref{limit_con}.

\section{From hyperbolic to complex rational hypergeometric function} \label{sec:hypergeometry}

Denote $\bm{c} = (c_0, c_1, c_2, c_3)^t$. The hyperbolic conical function considered in the previous section is a specialization of the Ruijsenaars' hyperbolic hypergeometric function $R(\omega_1, \omega_2, \bm{c}; x, \lambda)$ depending on eight parameters~\cite{R3}. The latter has several integral representations, in particular the one given in~\cite[Theorem 4.21]{BRS}
\begin{align} \label{hyp-hypergeom}
	\begin{aligned}
		R(\omega_1, \omega_2, \bm{c}; \imath x, \imath \lambda) & = \frac{\gamma^{(2)} \bigl( \lambda + \frac{\omega_1 + \omega_2}{2} - \hat{c}_0 \bigr)}{ 2 \, \prod_{j = 1}^3  \gamma^{(2)} \bigl( \lambda + \frac{\omega_1 + \omega_2}{2} + \hat{c}_j \bigr) \gamma^{(2)} (-c_0 - c_j) } \\[6pt]
		& \times \int_{\imath \mathbb{R}} \frac{ \prod_{j = 1}^6 \gamma^{(2)} \bigl( \pm z + \frac{\omega_1 + \omega_2}{2} - u_j \bigr) }{ \gamma^{(2)} (\pm 2z) } \; \frac{d z}{\imath \sqrt{\omega_1 \omega_2}},
	\end{aligned}
\end{align}
where 
\begin{align}
	\hat{\bm{c}} = \frac{1}{2}
	\begin{pmatrix}
		1 & 1 & 1 & 1 \\
		1 & 1 & -1 & -1 \\
		1 & -1 & 1 & -1\\
		1 & -1 & -1 & 1
	\end{pmatrix} \bm{c}.
\end{align}
and
\begin{align}
	& u_j = \frac{\omega_1 + \omega_2}{4} + c_{j - 1} - \frac{\hat{c}_0}{2} - \frac{\lambda}{2},  \qquad (j = 1, \dots, 4), \\[8pt]
	&  u_{5/6} = \frac{\omega_1 + \omega_2}{4} \pm  x + \frac{\hat{c}_0}{2} + \frac{\lambda}{2}.
\end{align}
The above integral converges and the integrand poles are separated by the 
integration contour under assumptions $x, \lambda \in \imath \mathbb{R}$ and
\begin{align} \label{c-cond}
	| \Re \hat{c}_0 | < \Re \frac{\omega_1 + \omega_2}{2}, \qquad 2 \Re c_{j} < \Re \biggl( \frac{\omega_1 + \omega_2}{2} + \hat{c}_0 \biggr), \qquad j = 0, \dots, 3.
 \end{align}
In~\cite[Section 8]{R3} Ruijsenaars showed on a formal level of rigour that the above function reduces to the Gauss hypergeometric function in the limit $\omega \to 0^+$
\begin{align}
	\lim_{\omega \to 0^+} R(\omega_1, \pi, \omega_1 \bm{c}; x, \omega_1 \lambda) = {}_2 F_1 \biggl( \hat{c}_0 + \imath \lambda, \, \hat{c}_0 - \imath \lambda, \, c_0 + c_2 + \frac{1}{2}; \, -\sh^2 x \biggr).
\end{align}
In what follows we show (omitting details) that in the limit $\omega_1/\omega_2 \to -1$ it can be also reduced to the hypergeometric function over the complex field.

Let us drop integral prefactors and consider the function
\begin{align}
	\int_{\imath \mathbb{R}} \mathcal{I}_h(z) \; \frac{d z}{\imath \sqrt{\omega_1 \omega_2}}
\end{align}
with
\begin{align}
	\begin{aligned}
		\mathcal{I}_h(z) & = \frac{1}{\gamma^{(2)}(\pm 2z)} \; \prod_{j = 0}^3 \gamma^{(2)} \biggl( \pm z + \frac{\omega_1 + \omega_2}{4} - c_j + \frac{\hat{c}_0 + \lambda}{2} \biggr) \\[6pt]
		& \times  \gamma^{(2)} \biggl( \pm (z - x) + \frac{\omega_1 + \omega_2}{4} - \frac{\hat{c}_0 + \lambda}{2} \biggr) \,  \gamma^{(2)} \biggl( \pm (z + x) + \frac{\omega_1 + \omega_2}{4} - \frac{\hat{c}_0 + \lambda}{2} \biggr).
	\end{aligned}
\end{align}
Notice that due to the difference equations and reflection formula for the hyperbolic gamma function we have
\begin{align}
	\frac{1}{\gamma^{(2)}(\pm 2z)} = -4 \sin \frac{2\pi z}{\omega_1} \, \sin \frac{2\pi z}{\omega_2}.
\end{align}
For the complex reduction, as before, parametrise $\omega_1 = \bar{\omega}_2 = \imath + \delta$ and take
\begin{align}
	& c_0 = c_1 = \imath \sqrt{\omega_1 \omega_2} (m_1 + u_1 \delta), && m_1 \in \mathbb{Z}, && u_1 \in \mathbb{C}\\[6pt]
	& c_{2/3} = \imath \sqrt{\omega_1 \omega_2} (m_2 \pm 1/2 + u_2 \delta), && m_2 \in \mathbb{Z}, && u_2 \in \mathbb{C}\\[6pt]
	& \lambda = \imath \sqrt{\omega_1 \omega_2} (k + v \delta), && k \in \mathbb{Z}, && v \in \mathbb{R} \\[0pt]
	& x = \imath \sqrt{\omega_1 \omega_2} (K(\delta) + \sigma), && K(\delta) \in \mathbb{Z}, && |\sigma| \leq \frac{1}{2},
\end{align}
such that $K(\delta) \delta \to \rho \in \mathbb{R}$ as $\delta \to 0^+$ and also
\begin{align}
	| \Im (u_1 + u_2) | < 1, \qquad | \Im (u_1 - u_2) | < 1, \qquad \frac{m_1 + m_2 + k}{2} \in \mathbb{Z}.
\end{align}
Note that the conditions on $\Im u_j$ ensure limitations~\eqref{c-cond}, while the last restriction is needed to perform the reduction.

Assuming the above parametrisation, the integrand has the pointwise limit
\begin{align}
	\mathcal{I}_h(\imath \sqrt{\omega_1 \omega_2} [N + \beta]) \underset{\substack{\delta \to 0^+ \\[2pt] N\delta \to \alpha \;}}{=} \mathcal{J}_h(\alpha, \beta),
\end{align}
where
\begin{align}
	\begin{aligned}
		\mathcal{J}_h(\alpha, \beta) & = \bigl[ 2\sh \pi(\alpha + \imath \beta) \bigr]^{ m_2 - m_1 + k + \imath (u_2 - u_1 + v) } \, \bigl[ 2\ch \pi(\alpha + \imath \beta) \bigr]^{ m_1 - m_2 + k + \imath (u_1 - u_2 + v) } \\[6pt]
		& \times \bigl[ 2\sh \pi (\alpha + \rho + \imath \beta + \imath \sigma)  \;  2\sh \pi (\alpha - \rho + \imath \beta - \imath \sigma) \bigr]^{- \frac{m_1 + m_2 + k + \imath (u_1 + u_2 + v - \imath)}{2}  }.
	\end{aligned}
\end{align}
Recall that we use the shorthand notation for complex powers $[w]^{\frac{m + \imath u}{2}} = w^{\frac{m + \imath u}{2}} \bar{w}^{\frac{-m + \imath u}{2}}$ with $m \in \mathbb{Z}$.
The integral over imaginary line transforms into the integral over the cylinder in the same way, as in the previous sections, that is
\begin{align} \label{hhyp-lim}
	\lim_{\delta \to 0^+} \delta \int_{\imath \mathbb{R}} \mathcal{I}_h(z) \, \frac{dz}{\imath \sqrt{\omega_1 \omega_2} } = \int_{\mathbb{R}} d\alpha \int_{- \frac{1}{2}}^{\frac{1}{2}} \mathcal{J}_{h}(\alpha, \beta) \, d\beta.
\end{align}
Note only that now we have three singular points corresponding to $\alpha = 0, \pm \rho$, so the right-hand side should be approximated by four (Riemann-type) sums.

Finally, after the change of variable $z = -1/\sh^2\pi(\alpha + \imath \beta)$ the obtained integral from the right~\eqref{hhyp-lim} transforms into the Euler representation of hypergeometric function over the complex field (modulo integral prefactors)
\begin{align} \label{comp-2F1}
	{}_2F_1^{\mathbb{C}}(a|a', b|b', c|c'; w,\bar{w}) = \frac{\bm{\Gamma}(c|c')}{\pi \bm{\Gamma}(b|b') \bm{\Gamma}(c - b| c' - b')} \int_{\mathbb{C}} [z]^{b - 1} [1 - z]^{c - b - 1} [1 - w z]^{-a} \, d^2z
\end{align}
with $w = - \sh^2 \pi (\rho + \imath \sigma)$ and parameters
\begin{align}
	& a = \frac{m_1 + m_2 + k + \imath(u_1 + u_2 + v - \imath)}{2}, && \; a' = \frac{-m_1 - m_2 - k + \imath(u_1 + u_2 + v - \imath)}{2}, \\[6pt]
	& b = \frac{m_1 + m_2 - k + \imath(u_1 + u_2 - v - \imath)}{2}, && \; b' = \frac{- m_1 - m_2 + k + \imath(u_1 + u_2 - v - \imath)}{2}, \\[6pt]
	& c = m_1 + \imath (u_1 - \imath), && \; c' = - m_1 + \imath (u_1 - \imath).
\end{align}
Thus, one of the representations of Ruijsenaars' hypergeometric function can be reduced to the complex Euler integral. Let us remark that using another limit~\eqref{hgamma-comp-lim-1} and the technique of~\cite{SS} it should be also possible to obtain the Barnes representation of ${}_2F_1^{\mathbb{C}}$ given in~\cite{N}.

\section{Further directions} \label{sec:concl}

In this section we indicate several directions in which the present work may be further developed. First, it is natural to generalize the described technique to multidimensional hypergeometric integrals. Among these are the eigenfunctions of the hyperbolic Ruijsenaars system constructed by Halln\"as and Ruijsenaars~\cite{HR}. Their rational reduction yields well-studied Heckman--Opdam $\mathfrak{gl}_n$ hypergeometric functions~\cite{BCDK}. Using the proposed technique one can also reduce Halln\"as--Ruijsenaars functions to the complex counterparts of Heckman--Opdam functions. The latter (modulo inessential factors) would be the joint eigenfunctions of the complex hyperbolic Calogero--Sutherland Hamiltonians
\begin{align}
	H = - \sum_{j = 1}^n \partial_{z_j}^2 + \sum_{1 \leq j < k \leq n} \frac{2 g (g - 1)}{ \sh^2(z_j - z_k) }, \qquad H' = - \sum_{j = 1}^n \partial_{\bar{z}_j}^2 + \sum_{1 \leq j < k \leq n} \frac{2 g' (g' - 1)}{ \sh^2(\bar{z}_j - \bar{z}_k) },
\end{align}
where $z$ is a coordinate on the complex cylinder and $g - g' \in \mathbb{Z}$. The corresponding two-particle model, for which the eigenfunctions coincide with conical functions, is treated in~\cite{BSS2}. 

Second, note that all hyperbolic-type integrals considered in this paper contain hyperbolic gamma functions combined in pairs $\gamma^{(2)}(\pm z + a)$ (with $z$ being the integration variable). However, there are important types of integrals that do not belong to this class; the simplest one is~\cite[Corollary C.2]{R2}
\begin{align} \label{hgamma-fourier}
	\int_{\imath \mathbb{R} + 0} e^{\frac{2\pi \imath}{\omega_1 \omega_2} \lambda z + \frac{\pi \imath}{2} (B_{2,2}(z) - B_{2,2}(0)) } \gamma^{(2)}(z) \frac{dz}{\imath \sqrt{ \omega_1 \omega_2}} = e^{- \frac{\pi \imath}{2} B_{2,2}(\lambda)} \gamma^{(2)}(\lambda),
\end{align}
where $B_{2,2}(z)$ is given by~\eqref{B22-def}. There are two ways to take complex limit of this integral: one can convert either the hyperbolic function on the left or the one on the right into the complex gamma function. In the first case, which is described in detail in our paper~\cite[Section~6]{BSS}, one obtains the two-dimensional Fourier transform of the complex gamma function. In the second case one can use pointwise limit~\cite[(6.15)]{BSS} to reduce the above integral to the integral over complex plane defining the complex gamma function~\eqref{cgamma-def}. However, a rigorous proof of the latter limit requires establishing the corresponding estimates.
More involved integrals similar to~\eqref{hgamma-fourier} appear in the study of $b$-Whittaker functions~\cite{SSh}. 

Finally, it is known that univariate hyperbolic hypergeometric functions appear in the representation theory of modular double of $U_q(\mathfrak{sl}_2)$~\cite{B, I, PT}, while complex rational hypergeometric functions come up in representation theory of the group $SL(2,\mathbb{C})$~\cite{DSS, MN, N}. The limiting relation between these functions suggests a connection between (principal series) representations of the two algebraic objects, as well as their higher-rank counterparts.

\smallskip

\textbf{Acknowledgements.}
The authors are indebted to S. E. Derkachov for useful discussions and to the anonymous referee for helpful comments that improved the exposition. The work of V. P. Spiridonov has been partially funded within the framework of the HSE University Basic Research Program.

\appendix

\addtocontents{toc}{\protect\setcounter{tocdepth}{-1}}
\section*{Appendices}
\addtocontents{toc}{\protect\setcounter{tocdepth}{1}}
The following appendices are rather technical, so we begin with a brief overview of their contents and explain how they fit into the main text.
In the main text, our goal is to calculate the limit
\begin{align*} 
	\lim_{\delta \to 0^+} \delta \sum_{N \in \mathbb{Z}} \int_{- \frac{1}{2}}^{\frac{1}{2}} \mathcal{F}_\delta(N + \beta) \, d\beta,
\end{align*}
where $\mathcal{F}_\delta$ is a product of exponentials and ratios of hyperbolic gamma functions depending on the parameter $\delta$.
The pointwise limit
\begin{align*}
	\mathcal{F}_\delta(N + \beta) \underset{\substack{\delta \to 0^+ \\[3pt] N \delta \to \alpha \; \,}}{=} \tilde{\mathcal{F}}(\alpha, \beta)
\end{align*}
is well defined. In the appendices, we prove estimates justifying the identity
\begin{align*}
	\lim_{\delta \to 0^+} \delta \sum_{N \in \mathbb{Z}} \int_{- \frac{1}{2}}^{\frac{1}{2}} \mathcal{F}_\delta(N + \beta) \, d\beta = \int_{\mathbb{R}} d\alpha \int_{- \frac{1}{2}}^{\frac{1}{2}} \tilde{\mathcal{F}}(\alpha, \beta) \, d\beta.
\end{align*}
More precisely, the main result of Appendices~\ref{sec:ratios-q} and~\ref{sec:ratios-gamma} is Proposition~\ref{prop:g-ratio}, which, together with elementary Lemma~\ref{lem:F-prod}, yields a bound for the ratio $\mathcal{F}_\delta(N + \beta)/\tilde{\mathcal{F}}(N\delta, \beta)$ uniform in $N, \beta, \delta$, except for finitely many values of $N$.

To prove Proposition~\ref{prop:g-ratio}, we begin with ratios of $q$-products, into which the hyperbolic gamma function factorizes by definition. Under several assumptions on the parameters, we establish the key estimate for such ratios in Lemma~\ref{lem:q-pr}. The bound for $q$-products then implies a bound for hyperbolic gamma functions (Lemma~\ref{lem:gamma-ratio}). Most of the assumptions are subsequently removed using the auxiliary Lemmas~\ref{lem:ln}--\ref{lem:F-prod}, leading to Proposition~\ref{prop:g-ratio}.

However, Proposition~\ref{prop:g-ratio} does not apply to the finitely many values of $N$ satisfying $| N - \lfloor \alpha_s/\delta \rfloor| \leq N_0$, where $\alpha_s$ is defined by the property that $\tilde{\mathcal{F}}(\alpha_s, \beta)$ is singular at some $\beta$. To handle these exceptional cases, we prove Corollary~\ref{cor:g-ratio-smallN}. This result follows from Proposition~\ref{prop:g-ratio} and difference equations for the hyperbolic gamma function.

Combining these estimates allows us to replace $\mathcal{F}_\delta(N + \beta)$ by $\tilde{\mathcal{F}}(N\delta, \beta)$ in the above sum of integrals (after introducing cutoff $|N| \leq \lfloor M/\delta\rfloor$). The result is a Riemann-type sum, but, due to singularities of $\tilde{\mathcal{F}}(\alpha, \beta)$, the limiting Riemann integral (with cutoff $|\alpha| \leq M$) may be improper. As a final step, Appendix~\ref{sec:riem} shows that the two Riemann sums considered in the main text indeed approximate the corresponding improper integrals.

\section{Ratios of the $q$-products} \label{sec:ratios-q}

For $\Im \omega_1/\omega_2 > 0$ the hyperbolic gamma function essentially equals to the ratio of two infinite $q$-products with parameters $q = e^{2\pi \imath \omega_1/ \omega_2}$ and $\tilde{q} = e^{-2\pi \imath \omega_2/\omega_1}$, see~\eqref{hgamma-int}. So, to obtain bounds for the ratios of hyperbolic gamma functions we first consider ratios of the $q$-products. In this and other Appendices we assume that $\delta$ is a continuous real variable. Besides, by $\ln z$ with $z \in \mathbb{C}$ we denote principal branch of a logarithm with branch cut along the negative real axis.

\begin{notation} \label{not:ozy}
	Parametrize
	\begin{align} \label{param}
		\begin{aligned}
			& \omega_1 = \bar{\omega}_2 = \imath + \delta, && \qquad\quad  \delta > 0, \\[6pt]
			& z = \imath \sqrt{\omega_1 \omega_2} (N + \beta), && \qquad\quad N \in \mathbb{Z}, &&  \quad \beta \in \mathbb{R}, \\[6pt]
			& y = \imath \sqrt{\omega_1 \omega_2} (m + u \delta + \epsilon(\delta) \delta^2), && \qquad\quad m \in \mathbb{Z}, &&  \quad u \in \mathbb{C},
		\end{aligned}
	\end{align}
	where $\epsilon(\delta) \in C^1[0, \dm]$ with some $\dm > 0$.
\end{notation}

Recall the well-known limit for the ratio of $q$-products~\cite[(1.3.19)]{GR}
\begin{align} \label{q-prod-lim}
	\frac{(w;q)_{\infty}}{(q^c w; q)_{\infty}} \underset{q\to 1^-}{=} (1-w)^{c}.
\end{align}
With the taken notation we therefore have
\begin{align}
	\frac{\Bigl( e^{ 2\pi \imath \frac{z}{\omega_2}} ; \, e^{ 2\pi \imath \frac{\omega_1}{\omega_2} } \Bigr)_\infty}{\Bigl( e^{ 2\pi \imath \frac{z + y}{\omega_2}} ; \, e^{ 2\pi \imath \frac{\omega_1}{\omega_2} } \Bigr)_\infty} \;\; \underset{\substack{\delta \to 0^+ \\[3pt] N \delta \to \alpha \; \,}}{=} \;\; \Bigl( 1 - e^{-2\pi (\alpha + \imath \beta)} \Bigr)^{ \frac{m + \imath u}{2} },
\end{align}
where $\alpha \in \mathbb{R}$. The following lemma gives an estimate for the error in this limiting formula under some
constraints on parameters.

\begin{lemma}\label{lem:q-pr}
	Using Notation~\ref{not:ozy}, let $m > \Im u$. Then there exist $C_1, C_2> 0$ and $\dm > 0$ such that
	\begin{multline}\label{lemma-ineq}
		\Biggl| \, \ln \Bigl( e^{ 2\pi \imath \frac{z}{\omega_2}} ; \, e^{ 2\pi \imath \frac{\omega_1}{\omega_2} } \Bigr)_\infty - \ln \Bigl( e^{ 2\pi \imath \frac{z + y}{\omega_2}} ; \, e^{ 2\pi \imath \frac{\omega_1}{\omega_2} } \Bigr)_\infty \\
		- \frac{m + \imath u}{2} \, \ln \Bigl( 1 - e^{-2\pi (N \delta + \imath \beta)} \Bigr) \Biggr| \leq \frac{C_1 \delta}{1 - e^{-C_2 N \delta}}
	\end{multline}
	for all $N, \delta, \beta$ satisfying
	\begin{align}
		N \geq 1, \qquad 0 < \delta \leq \dm, \qquad | \beta | \leq \frac{1}{2}.
	\end{align}
\end{lemma}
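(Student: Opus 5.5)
The plan is to expand both infinite products termwise, reduce the difference of logarithms to a Riemann-type sum, and compare that sum with an integral. I will write everything in terms of the nome. From \eqref{param} we get
\begin{align*}
	\frac{\omega_1}{\omega_2} = \frac{\delta^2 - 1 + 2\imath \delta}{1 + \delta^2},
\end{align*}
so, after discarding an integer multiple of $2\pi\imath$ in the exponent, $q = e^{2\pi \imath \omega_1/\omega_2} = e^{-h}$ with $h = 4\pi\delta/(1 + \imath \delta)$. Thus $\Re h \asymp \delta$ and $\arg h = O(\delta)$.

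Similarly, set $w = e^{2\pi \imath z/\omega_2} = e^{-2\pi (i + \delta')(N+\beta)}$ with $\sqrt{\omega_1/\omega_2} = \imath + \delta + O(\delta^2)$. I also write $e^{2\pi \imath y/\omega_2} = e^{-hc}$, where $c$ is chosen so that
\begin{align*}
	hc = 2\pi (m + \imath u)\delta + O(\delta^2), \qquad c = \frac{m + \imath u}{2} + O(\delta),
\end{align*}
with the $O(\delta)$ term $C^1$ in $\delta$ because $\epsilon \in C^1$. The point of the hypothesis $m > \Im u$ is exactly that $\Re(hc) = 2\pi(m - \Im u)\delta + O(\delta^2) > 0$ for small $\delta$. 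Finally $|w| = e^{-2\pi N\delta + O(\delta)}$, more precisely $|w| \le e^{-c_0 N\delta}$ with $c_0 > 0$ for $N \ge 1$. Moreover $w = e^{-2\pi(N\delta + \imath\beta)}\bigl(1 + O(\delta)(1 + N\delta)\bigr)$, where the factor $N\delta$ will be absorbed below by $|w|$.

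Since $|wq^n| < 1$ and $|wq^n e^{-hc}| < 1$, I use principal logarithms termwise. The left-hand side then becomes
\begin{align*}
	S = \sum_{n \ge 0} \bigl[\ln(1 - x_n) - \ln(1 - x_n e^{-hc})\bigr], \qquad x_n = w e^{-nh}.
\end{align*}
Writing each bracket as $-\int_0^{hc} \frac{x_n e^{-t}}{1 - x_n e^{-t}} \, dt$, I get $S = \sum_n h\, G(nh)$, where
\begin{align*}
	G(s) = -\frac{1}{h}\int_0^{hc} \frac{w e^{-s-t}}{1 - w e^{-s-t}} \, dt
\end{align*}
is analytic along the ray $s \in h\mathbb{R}_{\ge 0}$. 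Along this ray, and along the segment $t \in [0, hc]$, we have $\Re(s + t) \ge 0$. Hence $|we^{-s-t}| \le |w| e^{-c_1 |s|}$, so
\begin{align*}
	|1 - we^{-s-t}| \ge 1 - |w| \ge 1 - e^{-c_0 N\delta}.
\end{align*}
This lower bound is where the denominator $1 - e^{-C_2 N\delta}$ of \eqref{lemma-ineq} comes from.

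Next I compare the sum with the integral over the ray, $\int_{h\mathbb{R}_+} G(s)\,ds$. Up to $O(|hc|^2)$ corrections, this integral equals $c \ln(1-w)$, since it is the same as $-\int_0^{hc} \frac{1}{h} \bigl(-\ln(1 - we^{-t})\bigr) dt$. The sum-versus-integral error is at most $|h|\,|G(0)| + |h|\int |G'(s)|\,|ds|$. Both terms are bounded by
\begin{align*}
	C\delta \, \frac{|w|}{(1 - |w|)^2} \cdot (1 - |w|)\cdot\frac{1}{1}
\end{align*}
after integrating the exponential decay in $|s|$, which gives $\lesssim \delta\, |w|/(1 - |w|)$. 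The remaining errors are replacing $c$ by $(m + \imath u)/2$ and replacing $w$ by $e^{-2\pi(N\delta + \imath\beta)}$. Each costs $O(\delta)\bigl(|\ln(1-w)| + |w|(1+N\delta)/(1-|w|)\bigr)$, and this is again $\le C\delta/(1 - e^{-C_2 N\delta})$ because $N\delta\, e^{-c_0 N\delta}$ is bounded. Combining these estimates gives \eqref{lemma-ineq}.

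The main obstacle is making the Riemann-sum comparison uniform, since the mesh $h$ is complex and $w$ can approach $1$ (small $N\delta$, $\beta \approx 0$). I will handle this by doing every estimate along the rotated ray $h\mathbb{R}_+$ and by using only the lower bound $1 - |w|$, never anything finer. Concretely, $\int_0^\infty |w| e^{-c_1 r}/(1 - |w|e^{-c_1 r})^2\, dr \le C/(1 - |w|)$, which keeps all constants independent of $N$, $\beta$ and $\delta \le \dm$.
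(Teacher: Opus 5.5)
Your proof is correct, and it takes a genuinely different route from the paper. The paper works on the ``dual'' side. It expands $\ln(w;q)_\infty=-\sum_{k\ge1}w^k/(k(1-q^k))$ in powers of $w$ and writes the left-hand side of \eqref{lemma-ineq} as $\sum_k \frac{w^k}{k}F(\delta,k\delta,N\delta)$. It then splits at $k=\lfloor 1/\delta\rfloor$. The head is bounded by a first-order Taylor estimate $|F|\le (C_1N\delta+C_2)\,k\delta$; this is where $\epsilon\in C^1$ is used, and where the factor $N\delta$ is absorbed by $e^{-C_3N\delta k}$. The tail is bounded by geometric series.

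You stay on the product side instead. You represent each factor difference as a $t$-integral over $[0,hc]$ and make a first-order Euler--Maclaurin comparison along the rotated ray $h\mathbb{R}_{\ge0}$. The core estimate is then
\[
|h|\int_{h\mathbb{R}_{\ge 0}}|G'(s)|\,|ds|\le |c|\,\frac{|h|^2}{\Re h}\int_0^\infty\frac{|w|e^{-\rho}}{(1-|w|e^{-\rho})^2}\,d\rho=|c|\,\frac{|h|^2}{\Re h}\,\frac{|w|}{1-|w|},
\]
together with $|\ln(1-w)|\le |w|/(1-|w|)$ for the replacement $c\to(m+\imath u)/2$.

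A few points should be tightened, though none affects the conclusions:
\begin{itemize}
\item Your intermediate display for the Riemann-sum error is garbled and should be replaced by the computation above.
\item The main-term correction is $O(|hc|^2/|h|)\cdot|w|/(1-|w|)$, not $O(|hc|^2)$.
\item The relative error $w/e^{-2\pi(N\delta+\imath\beta)}-1$ is not $O(\delta(1+N\delta))$ uniformly in $N$, since $|w/e^{-2\pi(N\delta+\imath\beta)}|$ grows like $e^{\pi N\delta^3}$. What you actually need is the absolute bound $|w-e^{-2\pi(N\delta+\imath\beta)}|\le C\delta(1+N\delta)e^{-\pi N\delta}$, which does hold.
\item Your termwise principal logarithms agree with the paper's series definition of $\ln(\,\cdot\,;q)_\infty$, not with the principal logarithm of the product.
\end{itemize}

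Your route works in a single regime with no cutoff at $k\sim 1/\delta$. It makes the denominator $1-e^{-C_2N\delta}$ transparently come from $1-|w|$, and it needs only boundedness of $\epsilon$ rather than $\epsilon\in C^1$. The paper's route is pure series bookkeeping, in the same spirit as Lemma~\ref{lem:ln}, and avoids any contour or Riemann-sum considerations.
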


\begin{remark}
	The bound~\eqref{lemma-ineq} essentially captures two regimes
	\begin{align}
		\frac{C_1 \delta}{1 - e^{-C_2 N \delta}} \leq
		\left\{ \begin{aligned}
			& C \delta, && \quad\text{if $N\delta$ is large (e.g., $N\delta \geq 1$)}, \\[6pt]
			& \frac{\tilde{C}}{N}, && \quad \text{if $N\delta$ is small}.
		\end{aligned}
		\right.
	\end{align}
	The uniform bound for the first regime has previously appeared in~\cite[Lemma 2.17]{Ra}. Besides, uniformity of the limit~\eqref{q-prod-lim} is established in~\cite[Proposition A.2]{K} for $c \in \mathbb{R}$. However, these results seem insufficient for our purposes.
\end{remark}

\begin{proof} To prove the desired estimate we use Taylor expansions
	\begin{align}
		\ln (w; q)_{\infty} = -\sum_{k = 1}^\infty \frac{w^k}{k (1 - q^k)},  \qquad \ln(1 - w) = - \sum_{k =1}^\infty \frac{w^k}{k}
	\end{align}
	absolutely convergent if $|w| < 1$, $|q| < 1$. Let us proceed with the following steps.
	\bigskip
	
	\noindent \textit{Step 1.}
	First, check that we can use these expansions. From~\eqref{param} we have
	\begin{align}
		\Im \frac{\omega_1}{\omega_2} = \frac{2\delta}{1 + \delta^2}, \qquad \Re \sqrt{ \frac{\omega_1}{\omega_2} }= \frac{\delta}{\sqrt{1 + \delta^2}}, \qquad \Im \sqrt{ \frac{\omega_1}{\omega_2} }= \frac{1}{\sqrt{1 + \delta^2}}.
	\end{align}
	Hence, under the lemma assumptions
	\begin{align}
		& \bigl| e^{-2\pi (N \delta + \imath \beta)} \bigr| = e^{- 2\pi N \delta} < 1, && \Bigl| e^{ 2\pi \imath \frac{\omega_1}{\omega_2} } \Bigr| = e^{- \frac{4\pi \delta}{1 + \delta^2} } < 1, \\[6pt]
		& \Bigl| e^{ 2\pi \imath \frac{z}{\omega_2} } \Bigr| = e^{- \frac{2\pi \delta}{\sqrt{1 + \delta^2}} (N + \beta)  } < 1, && \Bigl| e^{ 2\pi \imath \frac{y}{\omega_2} } \Bigr| = e^{- \frac{2\pi \delta}{\sqrt{1 + \delta^2}} (m - \Im u + O(\delta)) } < 1,
	\end{align}
	where the last inequality holds for sufficiently small $\dm$.
	Consequently, the following series
	\begin{align}\label{series}
		\ln \frac{\Bigl( e^{ 2\pi \imath \frac{z}{\omega_2}} ; \, e^{ 2\pi \imath \frac{\omega_1}{\omega_2} } \Bigr)_\infty}
		{\Bigl( e^{ 2\pi \imath \frac{z + y}{\omega_2}} ; \, e^{ 2\pi \imath \frac{\omega_1}{\omega_2} } \Bigr)_\infty}
		- \frac{m + \imath u}{2} \, \ln \Bigl( 1 - e^{-2\pi (N \delta + \imath \beta)} \Bigr)
		= \sum_{k = 1}^\infty \frac{e^{2\pi \imath \frac{z}{\omega_2} k}}{k} F_k,
	\end{align}
	where
	\begin{align}
		F_k=  \frac{m + \imath u}{2} \, e^{-2\pi \bigl(N\delta + \imath \beta + \frac{\imath z}{\omega_2} \bigr) k } + \, \frac{ e^{2\pi \imath  \frac{y}{\omega_2} k } - 1 }{1 - e^{ 2\pi \imath \frac{\omega_1}{\omega_2} k }},
	\end{align}
	converges absolutely. To bound the whole series in the desired way we split it into two parts
	\begin{align}\label{sum2}
		\sum_{k = 1}^\infty = \sum_{k = 1}^{ \lfloor 1/\delta \rfloor } + \sum_{ k = \lfloor 1/\delta\rfloor + 1 }^\infty
	\end{align}
	and estimate them separately.
	\bigskip
	
	\noindent \textit{Step 2.}
	Consider the first sum in~\eqref{sum2}. For brevity, denote
	\begin{align}\label{c12}
		\sqrt{ \frac{\omega_1}{\omega_2} } = \imath + \delta + c_1(\delta) \, \delta^2, \qquad \frac{\omega_1}{\omega_2} = - 1 + 2\imath \delta + 2 \imath \, c_2(\delta) \, \delta^2,
	\end{align}
	where $c_i(\delta) = O(1)$. Let us insert these expressions together with the parametrizations of $z$ and $y$~\eqref{param} into the function in brackets in~\eqref{series}.
	Then we arrive at the formula
	\begin{align}\label{f}
		F_k = \frac{m + \imath u}{2} \, e^{2\pi \bigl( c_1  N\delta + (1 + c_1 \delta) \beta \bigr) k \delta} +  \frac{ e^{-2\pi \bigl(m + \imath u + u \delta + c_1 \delta (m + u \delta + \epsilon \delta^2) + \epsilon \delta (\imath + \delta) \bigr) k \delta} - 1 }{1 - e^{ -4\pi ( 1  + c_2 \delta) k\delta }}.
	\end{align}
	Clearly, it is a function of $\delta$ and combinations of parameters~$k \delta$, $N \delta$
	\begin{align}\label{Ff}
		F_k = F(\delta, k\delta, N\delta).
	\end{align}
	Due to the lemma assumptions and the summation range $k = 1, \dots, \lfloor1/\delta \rfloor$, the arguments  of the function~$F(\delta, x, s)$ belong to the set
	\begin{align}\label{D}
		D = \bigl\{ (\delta, x, s) \in \mathbb{R}^3 \;\; | \;\; 0 < \delta \leq \dm, \;\; \delta \leq x \leq 1, \;\; \delta \leq s \bigr\}.
	\end{align}
	The function itself
	\begin{align}\label{Fexpl}
		F(\delta, x, s) = \frac{m + \imath u}{2} \, e^{2\pi \bigl( c_1  s + (1 + c_1 \delta) \beta \bigr) x} +  \frac{ e^{-2\pi \bigl(m + \imath u + u \delta + c_1 \delta (m + u \delta + \epsilon \delta^2) + \epsilon \delta (\imath + \delta) \bigr) x} - 1 }{1 - e^{ -4\pi ( 1  + c_2 \delta) x }}
	\end{align}
	is continuous on the larger domain
	\begin{align}
		D_F = \bigl\{ (\delta, x, s) \in \mathbb{R}^3 \;\; | \;\; 0 \leq \delta \leq \dm, \;\; 0 \leq x \leq 1, \;\; 0 \leq s\bigr\}
	\end{align}
	for small enough $\dm$, such that $1 + \Re [c_2(\delta)] \, \delta > 0$ holds true. Notice also that
	\begin{align}\label{Flim}
		F(0, 0, s) = \lim_{\substack{ \delta \to 0^+ \\ x \to 0^+} } F(\delta, x ,s) = 0.
	\end{align}

	Using explicit formula~\eqref{Fexpl} one can check that partial derivatives of $F$ with respect to $\delta$ and $x$ have the form
	\begin{align} \label{Fpd}
		\begin{aligned}
			& \frac{\partial F}{\partial \delta} = (m + \imath u) \pi \, \frac{dc_1}{d\delta} \, x \, s \, e^{2\pi \bigl( c_1  s + (1 + c_1 \delta) \beta \bigr) x} + f_1(\delta, x), \\[6pt]
			& \frac{\partial F}{\partial x} = (m + \imath u) \pi \, c_1 \, s \, e^{2\pi \bigl( c_1  s + (1 + c_1 \delta) \beta \bigr) x} + f_2(\delta, x),
		\end{aligned}
	\end{align}
where we only explicitly indicate the terms linear in $s$, and the functions $f_1, f_2$ are continuous on $D_F$
(recall the assumption that $\epsilon(\delta) \in C^1[0, \dm]$). Let us argue that these partial derivatives
admit the bounds
	\begin{align}\label{Fpd-b}
		\biggl| \frac{\partial F}{\partial \delta} \biggr| \leq C_1 s + C_2, \qquad \biggl| \frac{\partial F}{\partial x} \biggr| \leq C_1 s + C_2
	\end{align}
	with some $C_1,C_2 > 0$ uniform in $(\delta, x, s) \in D_F$.
	First, by definition~\eqref{c12}
	\begin{align}
		\Re c_1 = \frac{1}{\delta^2} \biggl( \Re \sqrt{\frac{\omega_1}{\omega_2}} - \delta \biggr) = \frac{1}{\delta} \biggl( \frac{1}{\sqrt{1 + \delta^2}} - 1 \biggr) < 0.
	\end{align}
	Hence, the exponential functions in \eqref{Fpd} are uniformly bounded on $D_F$
	\begin{align}
		\Bigl| e^{2\pi \bigl( c_1  s + (1 + c_1 \delta) \beta \bigr) x} \Bigr| = e^{2\pi \bigl( \Re c_1  s + (1 + \Re c_1 \delta) \beta \bigr) x} \leq C.
	\end{align}
	Second, since the functions $c_1(\delta)$, $c_1'(\delta)$, $f_j(\delta, x)$ are continuous and $\delta, x$ vary over compact sets, these functions are also uniformly bounded. Thus, we arrive at the estimates~\eqref{Fpd-b}
which yield	the standard estimate for the remainder in the Taylor series expansion
	\begin{align}
\bigl| F(\delta, x, s) \bigr| = \bigl| F(\delta, x, s) - F(0, 0, s) \bigr| \leq \frac{C_1 s + C_2}{2} ( x + \delta ).
	\end{align}
	It follows that on the subdomain $D$~\eqref{D} we have
	\begin{align}
		\bigl| F(\delta, x, s) \bigr| \leq (C_1 s + C_2) \, x,
	\end{align}
	which in the original notation~\eqref{Ff} is equivalent to
	\begin{align}
		\bigl| F(\delta, k\delta, N \delta) \bigr| \leq  (C_1 N\delta + C_2) \, k \delta.
	\end{align}
Note that the constants $C_1, C_2$ can be made uniform in $\beta$, since $F$ and its partial derivatives are clearly continuous with respect to $\beta$ and $|\beta| \leq 1/2$.
	
	Therefore,
	\begin{align}
		\left| \sum_{k = 1}^{ \lfloor 1/\delta \rfloor } \frac{e^{2\pi \imath \frac{z}{\omega_2} k}}{k} \, F(\delta, k\delta, N \delta) \right| \leq \delta \, (C_1 N\delta + C_2) \sum_{k = 1}^{ \lfloor 1/\delta \rfloor } e^{- \frac{2\pi \delta}{\sqrt{1 + \delta^2}} (N + \beta) k }.
	\end{align}
Since $N \geq 1$ and $\beta \geq -1/2$ we have $N + \beta \geq N/2$, so that
	\begin{align} \label{Nb-ineq}
		e^{- \frac{2\pi \delta}{\sqrt{1 + \delta^2}} (N + \beta)k } \leq e^{- C_3 N\delta k }, \qquad C_3 = \frac{\pi}{\sqrt{1 + \dm}}.
	\end{align}
	It remains to estimate the geometric progression sum
	\begin{align}
		\sum_{k = 1}^{ \lfloor 1/\delta \rfloor } e^{- C_3 N \delta k} \leq \sum_{k = 1}^{ \infty } e^{- C_3 N \delta k} = \frac{e^{- C_3 N \delta}}{1 - e^{- C_3 N \delta} }.
	\end{align}
	Collecting all together we arrive at the inequality
	\begin{align} \label{sum-p1}
		\left| \sum_{k = 1}^{ \lfloor 1/\delta \rfloor } \frac{e^{2\pi \imath \frac{z}{\omega_2} k}}{k} \, F(\delta, k\delta, N \delta) \right| \leq \frac{\delta \, (C_1 N\delta + C_2) \,  e^{- C_3 N \delta}}{ 1 - e^{- C_3 N \delta} } \leq \frac{C \delta}{1 - e^{- C_3 N \delta} },
	\end{align}
	where on the last step we use the fact that $(C_1 s + C_2) e^{-C_3 s}$ is uniformly bounded for $s \geq 0$.
	\bigskip
	
	\noindent \textit{Step 3.}
	It is left to estimate the second sum in~\eqref{sum2}
	\begin{align}
		\sum_{k = \lfloor 1/\delta \rfloor + 1}^\infty \frac{e^{2\pi \imath \frac{z}{\omega_2} k}}{k} \Biggl( \frac{m + \imath u}{2} \, e^{-2\pi \bigl(N\delta + \imath \beta + \frac{\imath z}{\omega_2} \bigr) k } + \, \frac{ e^{2\pi \imath  \frac{y}{\omega_2} k } - 1 }{1 - e^{ 2\pi \imath \frac{\omega_1}{\omega_2} k }} \Biggr).
	\end{align}
	There are two terms in the brackets and their sums can be separately bounded by the geometric series.
Indeed, for the first term we have
	\begin{align} \label{sum-p2}
		\left| \sum_{k = \lfloor 1/\delta \rfloor + 1}^\infty \frac{e^{-2\pi (N \delta + \imath \beta) k}}{k}  \right| \leq \, \delta \sum_{k = \lfloor 1/\delta \rfloor + 1}^\infty e^{-2\pi N \delta k} \leq \frac{\delta}{1 - e^{-2\pi N \delta}},
	\end{align}
	where we used the fact that $k \geq \lfloor 1/\delta \rfloor + 1 \geq 1/\delta$. To estimate the second term
	\begin{align}\label{2term}
		\left| \sum_{k = \lfloor 1/\delta \rfloor + 1}^\infty \frac{ e^{2\pi \imath \frac{z}{\omega_2} k } }{k} \, \frac{ e^{2\pi \imath  \frac{y}{\omega_2} k } - 1 }{1 - e^{ 2\pi \imath \frac{\omega_1}{\omega_2} k }}  \right| \leq \, \delta \sum_{k = \lfloor 1/\delta \rfloor + 1}^\infty e^{- \frac{2\pi \delta}{\sqrt{1 + \delta^2}} (N + \beta) k } \, \left| \frac{ e^{2\pi \imath  \frac{y}{\omega_2} k } - 1 }{1 - e^{ 2\pi \imath \frac{\omega_1}{\omega_2} k }}  \right|
	\end{align}
we apply the triangle inequalities
	\begin{align}
		& \Bigl| e^{2\pi \imath  \frac{y}{\omega_2} k } - 1 \Bigr| \leq  1 + e^{- \frac{2\pi k \delta}{\sqrt{1 + \delta^2}} (m - \Im u + O(\delta)) }  \leq 2, \\[8pt]
		& \Bigl| 1 - e^{ 2\pi \imath \frac{\omega_1}{\omega_2} k } \Bigr| \geq  1 - e^{- \frac{4\pi k \delta}{1 + \delta^2} } \geq 1 - e^{ - \frac{4\pi}{1 + \dm^2 } }.
	\end{align}
	Collecting all together and using the inequality~\eqref{Nb-ineq}, we can bound the right-hand side~\eqref{2term}
by the geometric series
	\begin{align} \label{sum-p3}
		\left| \sum_{k = \lfloor 1/\delta \rfloor + 1}^\infty \frac{ e^{2\pi \imath \frac{z}{\omega_2} k } }{k} \, \frac{ e^{2\pi \imath  \frac{y}{\omega_2} k } - 1 }{1 - e^{ 2\pi \imath \frac{\omega_1}{\omega_2} k }}  \right| & \leq \, \tilde{C} \delta \sum_{k = \lfloor 1/\delta \rfloor + 1}^\infty e^{- C_3 N\delta k }  \leq \frac{\tilde{C} \delta}{1 - e^{-C_3 N \delta}},
	\end{align}
	where the constants $\tilde{C}$ and $C_3 > 0$ do not depend on $\delta, N, \beta$.	
	\smallskip
	
	\noindent \textit{Step 4.}
	To conclude the proof combine the estimates~\eqref{sum-p1},~\eqref{sum-p2} and~\eqref{sum-p3}.
\end{proof}

\begin{corollary}\label{cor:q-pr}
	Using Notation~\ref{not:ozy}, assume that $m > \Im u$. For any $\ell \in \mathbb{Z}_{>0}$ there exist $C_1, C_2> 0$ and $\dm > 0$ such that
	\begin{multline}\label{cor-ineq}
		\Biggl| \, \ln \Bigl( e^{ 2\pi \imath \frac{z + \ell \omega_1}{\omega_2}} ; \, e^{ 2\pi \imath \frac{\omega_1}{\omega_2} } \Bigr)_\infty - \ln \Bigl( e^{ 2\pi \imath \frac{z + y  + \ell \omega_1}{\omega_2}} ; \, e^{ 2\pi \imath \frac{\omega_1}{\omega_2} } \Bigr)_\infty \\[6pt]
		- \frac{m + \imath u}{2} \, \ln \Bigl( 1 - e^{-2\pi (N \delta + \imath \beta)} \Bigr) \Biggr| \leq \frac{C_1 \delta}{1 - e^{-C_2 N \delta}}
	\end{multline}
	for all $N, \delta, \beta$ satisfying the constraints
	\begin{align}
		N \geq 1, \qquad 0 < \delta \leq \dm, \qquad | \beta | \leq \frac{1}{2}.
	\end{align}
\end{corollary}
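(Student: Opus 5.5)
The natural plan is to reduce Corollary~\ref{cor:q-pr} to Lemma~\ref{lem:q-pr} by absorbing the shift $\ell\omega_1$ into the parameters of Notation~\ref{not:ozy}. Since $z$ enters only through $e^{2\pi \imath z/\omega_2}$, the shift multiplies this by $q^\ell$, where $q = e^{2\pi\imath\omega_1/\omega_2}$. The identity $(w;q)_\infty = \prod_{j=0}^{\ell-1}(1-wq^j)\,(q^\ell w;q)_\infty$ then gives
\begin{multline*}
\ln\frac{(q^\ell e^{2\pi\imath z/\omega_2};q)_\infty}{(q^\ell e^{2\pi\imath (z+y)/\omega_2};q)_\infty}
= \ln\frac{(e^{2\pi\imath z/\omega_2};q)_\infty}{(e^{2\pi\imath (z+y)/\omega_2};q)_\infty} \\
- \sum_{j=0}^{\ell-1}\Bigl[\ln\bigl(1-q^j e^{2\pi\imath z/\omega_2}\bigr)-\ln\bigl(1-q^j e^{2\pi\imath (z+y)/\omega_2}\bigr)\Bigr],
\end{multline*}
with principal logarithms consistent with the series used in the lemma. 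This consistency holds because all moduli are $<1$ for $N\ge 1$, $|\beta|\le 1/2$, $m>\Im u$ and small $\delta$, exactly as in Step~1 of the proof of Lemma~\ref{lem:q-pr}. Lemma~\ref{lem:q-pr} bounds the first term together with the $\frac{m+\imath u}{2}\ln(1-e^{-2\pi(N\delta+\imath\beta)})$ correction by $C_1\delta/(1-e^{-C_2N\delta})$. So it remains to bound the $\ell$ finite correction terms by the same type of expression.

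For each $j$, I would write $w = q^j e^{2\pi\imath z/\omega_2}$ and $w' = w\,e^{2\pi\imath y/\omega_2}$, and expand
\[
\ln(1-w)-\ln(1-w') = -\sum_{k\ge1}\frac{w^k}{k}\bigl(1-e^{2\pi\imath yk/\omega_2}\bigr).
\]
Alternatively, one can use $|\ln(1-w)-\ln(1-w')|\le |w-w'|/\min(|1-w|,|1-w'|)$ along the segment, in the spirit of Lemma~\ref{lem:ln}. Here $|w-w'| \le |w|\,|1-e^{2\pi\imath y/\omega_2}|$.

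The key point is that $e^{2\pi\imath y/\omega_2}\to 1$ at rate $O(\delta)$. Indeed, $y/\omega_2 = \imath\sqrt{\omega_1/\omega_2}\,(m+u\delta+\dots)$ with $\sqrt{\omega_1/\omega_2}=\imath+\delta+O(\delta^2)$. Hence $2\pi\imath y/\omega_2 = -2\pi m\imath\cdot\imath\cdots$, which evaluates to $-2\pi m + O(\delta)$ modulo $2\pi\imath\mathbb{Z}$. More carefully, $2\pi\imath y/\omega_2 = 2\pi\imath\cdot\imath(\imath+\delta)(m+u\delta)+O(\delta^2) = -2\pi\imath m + O(\delta)$, so $e^{2\pi\imath y/\omega_2} = 1+O(\delta)$. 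This gives $|w-w'|\le C\delta$ uniformly.

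The denominators $|1-w|$ and $|1-w'|$ need to be bounded below. This is where the factor $1/(1-e^{-C_2N\delta})$ appears. I would compute $|w| = |q|^j e^{-\frac{2\pi\delta}{\sqrt{1+\delta^2}}(N+\beta)}\le e^{-C_3N\delta}$ by \eqref{Nb-ineq}, and similarly $|w'|\le e^{-C_3'N\delta}$ for small $\delta$, using $m>\Im u$. Then $|1-w|\ge 1-e^{-C_3N\delta}$. Together with $|w-w'|\le C\delta$, each correction term is bounded by $C\delta/(1-e^{-C_2N\delta})$. Summing the $\ell$ terms and adding the bound from the lemma, and adjusting constants (taking the minimum of the $C_2$'s and the sum of the $C_1$'s), yields \eqref{cor-ineq}.

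The main obstacle I expect is branch bookkeeping: confirming that the principal logarithm of the finite products agrees with the sum of the principal logarithms of the factors. The power series handle this automatically because every argument has modulus $<1$, which is why I would phrase everything through the absolutely convergent expansions rather than through the segment estimate.
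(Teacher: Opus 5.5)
Your argument is correct, but it is organized differently from the paper's.

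\textbf{The paper's route.} The paper does not peel off any Pochhammer factors. It observes that $\ell\omega_1=\imath\sqrt{\omega_1\omega_2}\,\bigl(\ell-\imath\ell\delta+c(\delta)\delta^2\bigr)$. Hence both $z+\ell\omega_1$ and $z+y+\ell\omega_1$ are themselves of the form ``$z+y'$'' in Notation~\ref{not:ozy}, with parameters $(m',u')=(\ell,-\imath\ell)$ and $(m+\ell,\,u-\imath\ell)$ respectively. The hypothesis $m'>\Im u'$ holds in both cases, since $\ell>-\ell$ and $m+\ell>\Im u-\ell$. The corresponding coefficients, $\ell$ and $\frac{m+\imath u}{2}+\ell$, differ by exactly $\frac{m+\imath u}{2}$. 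So after inserting $\pm\ln(e^{2\pi\imath z/\omega_2};q)_\infty$ and $\pm\ell\ln(1-e^{-2\pi(N\delta+\imath\beta)})$, the paper just applies Lemma~\ref{lem:q-pr} twice and needs no new estimate.

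\textbf{Your route.} You apply the lemma only once, at the original parameters. You then bound the $\ell$ leftover terms $\ln(1-q^jw)-\ln(1-q^jw')$ directly, in the style of Lemma~\ref{lem:ln}, using $|1-e^{2\pi\imath y/\omega_2}|\le C\delta$ and $|q^jw'|\le|q^jw|\le e^{-C_3N\delta}$.

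\textbf{Comparison.} The paper's route is shorter. It exploits the fact that Lemma~\ref{lem:q-pr} allows complex $u$: a shift by $\omega_1$ is simply $m\to m+1$, $u\to u-\imath$. Yours is more hands-on, but it makes three things explicit:
\begin{itemize}
\item only finitely many factors change;
\item the constant grows linearly in $\ell$;
\item the branch identity $\ln(w;q)_\infty=\sum_{j<\ell}\ln(1-wq^j)+\ln(q^\ell w;q)_\infty$ holds at the level of the absolutely convergent power series.
\end{itemize}

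\textbf{Two small slips to clean up.} First, the intermediate line claiming $2\pi\imath y/\omega_2$ equals ``$-2\pi m+O(\delta)$'' is wrong. It is $-2\pi\imath m+O(\delta)$, as you correct immediately afterwards. Second, in the segment estimate the minimum of $|1-\cdot|$ must be taken over the whole segment, not just the endpoints. This is harmless here: the segment lies in the disc of radius $\max(|w|,|w'|)<1$, which still gives the lower bound $1-e^{-C_3N\delta}$.
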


\begin{proof}
	Split the left-hand side of the inequality~\eqref{cor-ineq} into two parts
	\begin{align*}
		& \mathrm{LHS} \leq \Biggl| \,  \ln \Bigl( e^{ 2\pi \imath \frac{z + \ell \omega_1}{\omega_2}} ; \, e^{ 2\pi \imath \frac{\omega_1}{\omega_2} } \Bigr)_\infty - \ln \Bigl( e^{ 2\pi \imath \frac{z}{\omega_2}} ; \, e^{ 2\pi \imath \frac{\omega_1}{\omega_2} } \Bigr)_\infty + \frac{\ell + \imath(-\imath\ell)}{2} \, \ln \Bigl( 1 - e^{-2\pi (N \delta + \imath \beta)} \Bigr) \Biggr|  \\[8pt]
		& + \Biggl| \, \ln \Bigl( e^{ 2\pi \imath \frac{z}{\omega_2}} ; \, e^{ 2\pi \imath \frac{\omega_1}{\omega_2} } \Bigr)_\infty - \ln \Bigl( e^{ 2\pi \imath \frac{z + y  + \ell \omega_1}{\omega_2}} ; \, e^{ 2\pi \imath \frac{\omega_1}{\omega_2} } \Bigr)_\infty - \frac{m + \ell + \imath(u-\imath\ell)}{2} \, \ln \Bigl( 1 - e^{-2\pi (N \delta + \imath \beta)} \Bigr) \Biggr|.
	\end{align*}
	Notice that
	\begin{align}
		\ell \omega_1 = \imath \sqrt{\omega_1 \omega_2} \biggl( -\imath \ell \sqrt{ \frac{\omega_1}{\omega_2} } \biggr) = \imath \sqrt{\omega_1 \omega_2} \bigl( \ell - \imath \ell \delta + c(\delta) \, \delta^2 \bigr),
	\end{align}
	where $c(\delta) = O(1)$. Hence,
	\begin{align}
		& z + \ell \omega_1 = \imath \sqrt{\omega_1 \omega_2} \bigl( N + \beta + \ell - \imath \ell \delta + c(\delta) \, \delta^2 \bigr), \\[6pt]
		& z + y + \ell \omega_1 = \imath \sqrt{\omega_1 \omega_2} \bigl( N + \beta + m + \ell + (u - \imath \ell) \delta + [c(\delta) + \epsilon(\delta)] \, \delta^2 \bigr).
	\end{align}
	Since $m > \Im u$ and $\ell \in \mathbb{Z}_{>0}$, we also have
		\begin{align}
			\ell > \Im(- \imath\ell) = -\ell, \qquad m + \ell > \Im(u - \imath \ell) = \Im u - \ell.
		\end{align}
	Thus, both parts are estimated using Lemma~\ref{lem:q-pr}.
\end{proof}

\section{Ratios of the hyperbolic gamma functions} \label{sec:ratios-gamma}

By definition~\eqref{hgamma-def}, ratios of hyperbolic gamma functions can be written in terms of infinite $q$-products. We consider
\begin{align}\label{g-ratio}
	\frac{ \gamma^{(2)}(z + y) }{ \gamma^{(2)}(z) } = e^{ \frac{\pi \imath}{2} \bigl( B_{2,2}(z) - B_{2,2}(z + y) \bigr) } \, \frac{\Bigl( e^{ 2\pi \imath \frac{z}{\omega_2}} ; \, e^{ 2\pi \imath \frac{\omega_1}{\omega_2} } \Bigr)_\infty}{\Bigl( e^{ 2\pi \imath \frac{z + y}{\omega_2}} ; \, e^{ 2\pi \imath \frac{\omega_1}{\omega_2} } \Bigr)_\infty} \; \frac{\Bigl( e^{ 2\pi \imath \frac{z + y - \omega_2}{\omega_1}} ; \, e^{ -2\pi \imath \frac{\omega_2}{\omega_1} } \Bigr)_\infty}{\Bigl( e^{ 2\pi \imath \frac{z - \omega_2}{\omega_1}} ; \, e^{ -2\pi \imath \frac{\omega_2}{\omega_1} } \Bigr)_\infty}.
\end{align}
As discussed in Section~\ref{sec:hgamma-comp-lim}, under the parametrization~\eqref{param}
one has the limit \cite{BSS}
\begin{align}\label{ratio-lim}
	e^{-\pi \imath Nm - \frac{\pi \imath}{2} m^2} \; \frac{ \gamma^{(2)}(z + y) }{ \gamma^{(2)}(z) } \underset{\substack{\delta \to 0^+\\[3pt] N \delta \to \alpha \; \,}}{=} \bigl(2\sh \pi(\alpha + \imath \beta) \bigr)^{\frac{m + \imath u}{2}} \, \bigl(2\sh \pi(\alpha - \imath \beta) \bigr)^{\frac{-m + \imath u}{2}}.
\end{align}
The following lemma gives an estimate of the error term under some restrictions on the parameters. Define
\begin{multline} \label{lng-ratio}
	\Ln  \frac{\gamma^{(2)}(z + y) }{ \gamma^{(2)}(z) } : =  \frac{\pi \imath}{2} \bigl( B_{2,2}(z) - B_{2,2}(z + y) \bigr) + \ln \Bigl( e^{ 2\pi \imath \frac{z}{\omega_2}} ; \, e^{ 2\pi \imath \frac{\omega_1}{\omega_2} } \Bigr)_\infty  - \ln \Bigl( e^{ 2\pi \imath \frac{z + y}{\omega_2}} ; \, e^{ 2\pi \imath \frac{\omega_1}{\omega_2} } \Bigr)_\infty \\[8pt]
	+ \ln \Bigl( e^{ 2\pi \imath \frac{z + y - \omega_2}{\omega_1}} ; \, e^{ -2\pi \imath \frac{\omega_2}{\omega_1} } \Bigr)_\infty - \ln \Bigl( e^{ 2\pi \imath \frac{z - \omega_2}{\omega_1}} ; \, e^{ -2\pi \imath \frac{\omega_2}{\omega_1} } \Bigr)_\infty.
\end{multline}
As before, by $\ln z$ we denote principal branch of a logarithm, so that $\Im (\ln z) \in (-\pi, \pi]$. It is not necessarily true that $\ln \gamma^{(2)}(z + y) / \gamma^{(2)}(z)$ equals the right-hand side of the last formula, because the imaginary part of the right-hand side may not be in $(-\pi, \pi]$. However, the ambiguities disappear after exponentiation
\begin{align}
	\exp \biggl( \Ln  \frac{\gamma^{(2)}(z + y) }{ \gamma^{(2)}(z) } \biggr) = \exp \biggl( \ln  \frac{\gamma^{(2)}(z + y) }{ \gamma^{(2)}(z) } \biggr) ,
\end{align}
which is what we need, since at the end we are interested in ratios of gamma functions themselves, not their logarithms (see Proposition~\ref{prop:g-ratio}). 

\begin{lemma} \label{lem:gamma-ratio}
	Using Notation~\ref{not:ozy}, assume $m > |\Im u|$. Then there exist constants \mbox{$C_1, C_2, \dm > 0$} such that
	\begin{multline}\label{lem2-ineq}
		\Biggl| \, \Ln  \frac{\gamma^{(2)}(z + y) }{ \gamma^{(2)}(z) } - \pi \imath N m - \frac{\pi \imath}{2} m^2 - \pi \imath N \epsilon(\delta) \delta^2 \\[6pt]
		- \frac{m + \imath u}{2} \ln \Bigl( 2 \sh \pi (N \delta + \imath \beta) \Bigr) - \frac{-m + \imath u}{2}  \ln \Bigl( 2 \sh \pi (N \delta - \imath \beta) \Bigr)  \Biggr| \leq \frac{C_1 \delta}{1 - e^{-C_2 N \delta}}
	\end{multline}
	for all $N, \delta, \beta$ satisfying the constraints
	\begin{align}
		N \geq 1, \qquad 0 < \delta \leq \dm, \qquad | \beta | \leq \frac{1}{2}.
	\end{align}
\end{lemma}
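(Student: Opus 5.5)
The plan is to split $\Ln$ in~\eqref{lng-ratio} into three groups: the Bernoulli polynomial difference, the $q$-product ratio, and the $\tilde q$-product ratio. Each group will be matched with the corresponding part of the target expression in~\eqref{lem2-ineq}.

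\emph{The $q$-product ratio.} This is exactly the quantity controlled by Lemma~\ref{lem:q-pr}. The hypothesis $m>|\Im u|$ implies $m>\Im u$, so that lemma applies. It yields
\[
\tfrac{m+\imath u}{2}\ln\bigl(1-e^{-2\pi(N\delta+\imath\beta)}\bigr)
\]
with error at most $C_1\delta/(1-e^{-C_2N\delta})$.

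\emph{The $\tilde q$-product ratio.} Here $\tilde q=e^{-2\pi\imath\omega_2/\omega_1}$. The parametrization~\eqref{param} is symmetric under complex conjugation, which exchanges $\omega_1\leftrightarrow\omega_2$ and maps $\sqrt{\omega_1\omega_2}\mapsto\sqrt{\omega_1\omega_2}$ (it is real, equal to $\sqrt{1+\delta^2}$). So I will rewrite the arguments as follows. Since $\imath\sqrt{\omega_1\omega_2}/\omega_1=\overline{-\imath\sqrt{\omega_1\omega_2}/\omega_2}$, I get
\[
e^{2\pi\imath (z-\omega_2)/\omega_1}=\overline{e^{2\pi\imath(\bar z'+\ldots)/\omega_2}}
\]
with $N+\beta$ replaced by $-(N+\beta)$, up to a shift by $\omega_2$. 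Concretely, the $\tilde q$-ratio is the complex conjugate of a $q$-ratio of the type in Corollary~\ref{cor:q-pr}. In that ratio the roles of $(N,\beta,m,u)$ become $(N,-\beta,-m,-\bar u)$ after using $\Re\,\imath\sqrt{\omega_1\omega_2}=0$ and absorbing the $-\omega_2\mapsto$ shift via $\ell$ and the periodicity of exponentials in integer shifts. I also need to arrange the sign so that the product appears as $(w;\tilde q)_\infty/(w\tilde q^{c};\tilde q)_\infty$ with the correct orientation. The ratio appears inverted, numerator with $z+y$, which is why the condition needed is $-m<\ldots$, i.e. $m>-\Im u$. This is where the full hypothesis $m>|\Im u|$ is used. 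Applying Lemma~\ref{lem:q-pr}, or Corollary~\ref{cor:q-pr} with $\ell=1$ to absorb the $-\omega_2$ shift, and conjugating gives
\[
\tfrac{-m+\imath u}{2}\ln\bigl(1-e^{-2\pi(N\delta-\imath\beta)}\bigr)
\]
with the same type of error. I expect the main obstacle to be this bookkeeping: verifying the exact correspondence of parameters, including the $O(\delta^2)$ term $\epsilon(\delta)$ (whose conjugate is still $C^1$) and the sign in front of $u$, so that the resulting coefficient is precisely $(-m+\imath u)/2$.

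\emph{The Bernoulli term.} By~\eqref{B22-def},
\[
B_{2,2}(z)-B_{2,2}(z+y)=-\frac{y\,(2z+y-\omega_1-\omega_2)}{\omega_1\omega_2}.
\]
With $z,y\in\imath\sqrt{\omega_1\omega_2}\,(\cdot)$ the quotient simplifies exactly, with $\omega_1+\omega_2=2\delta$ and $\omega_1\omega_2=1+\delta^2$. Expanding yields $\pi\imath Nm+\frac{\pi\imath}{2}m^2+\pi\imath N\epsilon\delta^2$, plus terms $\frac{m+\imath u}{2}\cdot 2\pi(N\delta+\imath\beta)$-type linear pieces, plus $O(\delta)$ remainders. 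The point to check is that the remainders not proportional to $N$ are $O(\delta)$. The terms proportional to $N\delta$ must combine exactly with $\ln(1-e^{-2\pi(N\delta\pm\imath\beta)})$ into $\ln(2\sh\pi(N\delta\pm\imath\beta))$, because $2\sh w=e^{w}(1-e^{-2w})$. Any residual $N\delta^2$-type term will be bounded by $C\delta\cdot N\delta\le C\delta/(1-e^{-C_2N\delta})$ only up to the factor $N\delta$, so I expect it to cancel identically. The explicit $\pi\imath N\epsilon\delta^2$ term in the statement is exactly the uncancelled piece, which indicates that this is the case. Any principal-branch discrepancy $\ln(e^{w}(1-e^{-2w}))$ versus $w+\ln(1-e^{-2w})$ is harmless for $|\beta|\le1/2$, $N\ge1$. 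Summing the three estimates proves~\eqref{lem2-ineq}.
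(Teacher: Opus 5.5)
Your plan matches the paper's proof. It uses the same split of $\Ln$ into the Bernoulli part, the $q$-ratio (Lemma~\ref{lem:q-pr}, using $m>\Im u$) and the $\tilde q$-ratio (complex conjugation plus Corollary~\ref{cor:q-pr} with $\ell=1$). It also uses $\ln(2\sh w)=w+\ln(1-e^{-2w})$ to move the linear terms into the Bernoulli part.

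Your prediction for the Bernoulli part is right. With $Y=m+u\delta+\epsilon\delta^2$ one gets exactly $\tfrac12\bigl(B_{2,2}(z)-B_{2,2}(z+y)\bigr)=NY+\beta Y+\tfrac12Y^2+\tfrac{\imath\delta}{\sqrt{1+\delta^2}}Y$. So the only $N$-dependence is $Nm+Nu\delta+N\epsilon\delta^2$. After subtracting the explicit terms and $\pi\imath(N\delta u+\beta m)$, the remainder is $O(\delta)$ uniformly in $N$.

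The branch identity is in fact exact for $N\ge1$, $|\beta|\le\frac12$, since $|\Im\,\pi(N\delta\pm\imath\beta)|\le\pi/2$ and $\Re(1-e^{-2\pi(N\delta\pm\imath\beta)})>0$. It has to be exact rather than merely harmless: a stray $2\pi\imath$ would not be absorbed by a bound that is $O(\delta)$ for $N\delta\gtrsim1$.

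The one step that fails as written is the parameter correspondence after conjugation. If the conjugated $\tilde q$-ratio carried parameters $(-m,-\bar u)$, Corollary~\ref{cor:q-pr} would need $-m>\Im(-\bar u)=\Im u$, which contradicts $m>|\Im u|$. Inverting the ratio only flips the overall sign; it does not reverse that hypothesis.

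The correct bookkeeping uses that $\sqrt{\omega_1\omega_2}=\sqrt{1+\delta^2}$ is real. Hence $-\bar z=z$ and $-\bar y=\imath\sqrt{\omega_1\omega_2}\,(m+\bar u\delta+\bar\epsilon\delta^2)$. Let $F_3$ denote the $\tilde q$-ratio minus $\tfrac{-m+\imath u}{2}\ln(1-e^{-2\pi(N\delta-\imath\beta)})$. Then
\[
\bar F_3=-\Bigl[\ln\bigl(e^{2\pi\imath\frac{z+\omega_1}{\omega_2}};q\bigr)_\infty-\ln\bigl(e^{2\pi\imath\frac{z-\bar y+\omega_1}{\omega_2}};q\bigr)_\infty-\tfrac{m+\imath\bar u}{2}\ln\bigl(1-e^{-2\pi(N\delta+\imath\beta)}\bigr)\Bigr].
\]
This is Corollary~\ref{cor:q-pr} with $(N,\beta,m,u,\epsilon)\mapsto(N,\beta,m,\bar u,\bar\epsilon)$ and $\ell=1$. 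Its hypothesis $m>\Im\bar u=-\Im u$ is where the other half of $m>|\Im u|$ is used. Conjugating back gives exactly your coefficient $\tfrac{-m+\imath u}{2}$ in front of $\ln(1-e^{-2\pi(N\delta-\imath\beta)})$.
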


\begin{proof}
	The proof relies on Lemma~\ref{lem:q-pr} and Corollary~\ref{cor:q-pr}. First, rewrite hyperbolic sines in~\eqref{lem2-ineq}
	\begin{multline}
		\frac{m + \imath u}{2} \ln \Bigl( 2 \sh \pi (N \delta + \imath \beta)  \Bigr) + \frac{-m + \imath u}{2}  \ln \Bigl( 2 \sh \pi (N \delta - \imath \beta) \Bigr) \\[6pt]
		= \frac{m + \imath u}{2} \ln \Bigl( 1 - e^{-2\pi (N \delta + \imath \beta)} \Bigr) + \frac{-m + \imath u}{2} \ln \Bigl( 1 - e^{-2\pi (N \delta - \imath \beta)} \Bigr) + \pi \imath(N \delta u + \beta m) .
	\end{multline}
Second, using this formula we split the whole left-hand side~\eqref{lem2-ineq} into three parts
\begin{align}
\text{LHS of~\eqref{lem2-ineq} } \leq | F_1 | + |F_2| + |F_3|,
\end{align}
where
	\begin{align}
		& F_1 = \frac{\pi \imath}{2} \bigl( B_{2,2}(z) - B_{2,2}(z + y) \bigr) - \pi \imath N m - \frac{\pi \imath}{2} m^2 - \pi \imath N \epsilon \delta^2 - \pi \imath(N \delta u + \beta m), \\[10pt]
		& F_2 = \ln \Bigl( e^{ 2\pi \imath \frac{z}{\omega_2}} ; \, e^{ 2\pi \imath \frac{\omega_1}{\omega_2} } \Bigr)_\infty - \ln \Bigl( e^{ 2\pi \imath \frac{z + y}{\omega_2}} ; \, e^{ 2\pi \imath \frac{\omega_1}{\omega_2} } \Bigr)_\infty - \frac{m + \imath u}{2} \ln \Bigl( 1 - e^{-2\pi (N \delta + \imath \beta)} \Bigr), \\[10pt]
		& F_3 = \ln \Bigl( e^{ 2\pi \imath \frac{z + y - \omega_2}{\omega_1}} ; \, e^{ -2\pi \imath \frac{\omega_2}{\omega_1} } \Bigr)_\infty - \ln \Bigl( e^{ 2\pi \imath \frac{z - \omega_2}{\omega_1}} ; \, e^{ -2\pi \imath \frac{\omega_2}{\omega_1} } \Bigr)_\infty - \frac{-m + \imath u}{2}  \ln \Bigl( 1 - e^{-2\pi (N \delta - \imath \beta)} \Bigr).
	\end{align}
	It is easy to estimate the first term $F_1$. By definition~\eqref{B22-def},
	\begin{align}
		B_{2,2}(z) - B_{2,2}(z + y) = -\frac{1}{\omega_1 \omega_2} (2z + y - \omega_1 - \omega_2) y.
	\end{align}
	Inserting parametrization of $z,y$~\eqref{param} into $F_1$, we arrive at the expression
	\begin{align}
		\frac{1}{\pi \imath} \, F_1 = \frac{(2\beta + 2m + u \delta + \epsilon \delta^2) (u  + \epsilon \delta)\delta}{2} + \frac{\imath \delta}{\sqrt{1 + \delta^2}} (m + u \delta + \epsilon \delta^2).
	\end{align}
	Since $\epsilon(\delta) = O(1)$, we have uniform in $N, \delta, \beta$ bound
	\begin{align} \label{F1}
		|F_1 | \leq C \delta
	\end{align}
	for sufficiently small $\delta$.
	
	Next, we estimate the second term $F_2$. Since $N> 0$ and $m > | \Im u | \geq \Im u$, we are able to apply Lemma~\ref{lem:q-pr}
	\begin{align} \label{F2}
		|F_2| \leq \frac{C_1 \delta}{1 - e^{-C_2 N \delta}}.
	\end{align}
	Finally, if we complex conjugate the third term
	\begin{align}
		\bar{F}_3 = \ln \Bigl( e^{ 2\pi \imath \frac{- \bar{z} - \bar{y} + \omega_1}{\omega_2}} ; \, e^{ 2\pi \imath \frac{\omega_1}{\omega_2} } \Bigr)_\infty - \ln \Bigl( e^{ 2\pi \imath \frac{- \bar{z} + \omega_1}{\omega_2}} ; \, e^{ 2\pi \imath \frac{\omega_1}{\omega_2} } \Bigr)_\infty + \frac{m + \imath u}{2} \ln \Bigl( 1 - e^{-2\pi (N \delta + \imath \beta)} \Bigr),
	\end{align}
	then we can again apply Corollary~\ref{cor:q-pr} (with $\ell = 1$) and obtain
	\begin{align} \label{F3}
		|F_3| \leq \frac{C_3 \delta}{1 - e^{-C_4 N \delta}}.
	\end{align}
	Here we notice
	\begin{align}\label{com_conj}
		- \bar{z} = \imath \sqrt{\omega_1 \omega_2} (N + \beta) = z, \qquad - \bar{y} = \imath \sqrt{\omega_1 \omega_2} \bigl( m + \bar{u} \delta + \bar{\epsilon}(\delta) \, \delta^2 \bigr)
	\end{align}
	and recall that by assumption $m > | \Im u | \geq \Im \bar{u}$. Then the statement of lemma~\eqref{lem2-ineq} follows from the bounds~\eqref{F1},~\eqref{F2} and~\eqref{F3}.
\end{proof}

To use the above lemma in practice we need to remove restrictions on the parameters $m, u$ and allow $N < 0$. This is achieved in Proposition~\ref{prop:g-ratio}, but before that we prove several auxiliary lemmas.

\begin{lemma} \label{lem:ln}
	The inequality
	\begin{align}
		\Bigl| \, \ln \Bigl( 1 - e^{ -2\pi (N \delta + \imath \beta + \mu \delta)} \Bigr) - \ln \Bigl( 1 - e^{ -2\pi (N \delta + \imath \beta)} \Bigr) \Bigr| \leq \frac{2\pi \mu \delta }{1 - e^{-2\pi N \delta}}
	\end{align}
	holds for
	\begin{align}
		N \geq 1, \qquad \delta > 0, \qquad \beta \in \mathbb{R}, \qquad \mu \geq 0.
	\end{align}
\end{lemma}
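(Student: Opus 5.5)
The plan is to write the difference of logarithms as the integral of a derivative along the real segment joining the two arguments. Set $w = N\delta + \imath\beta$ and define
\begin{align*}
	f(t) = \ln\bigl(1 - e^{-2\pi(w + t)}\bigr), \qquad t \in [0, \mu\delta].
\end{align*}
Since $\Re(w + t) \geq N\delta > 0$, we have $|e^{-2\pi(w+t)}| < 1$. Hence $1 - e^{-2\pi(w+t)}$ lies in the open disc of radius $1$ centred at $1$, which is contained in the right half-plane. The principal branch of the logarithm is therefore holomorphic along the whole path, and no branch-cut ambiguity arises. We then have $f(\mu\delta) - f(0) = \int_0^{\mu\delta} f'(t)\,dt$.

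Next we bound the derivative. A direct computation gives
\begin{align*}
	f'(t) = \frac{2\pi e^{-2\pi(w+t)}}{1 - e^{-2\pi(w+t)}}.
\end{align*}
Write $r = e^{-2\pi(N\delta + t)}$. Then $|e^{-2\pi(w+t)}| = r \le 1$ and $|1 - e^{-2\pi(w+t)}| \geq 1 - r$. Since $t \ge 0$, also $r \le e^{-2\pi N\delta} < 1$, which gives
\begin{align*}
	|f'(t)| \leq \frac{2\pi r}{1 - r} \leq \frac{2\pi}{1 - e^{-2\pi N\delta}}.
\end{align*}
Here we used the fact that $r/(1-r)$ is increasing in $r$, together with $r \le 1$ in the numerator. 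In fact the sharper bound $2\pi e^{-2\pi N\delta}/(1-e^{-2\pi N\delta})$ holds, but the weaker one is all we need.

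Integrating over an interval of length $\mu\delta$ gives the bound $2\pi\mu\delta/(1 - e^{-2\pi N\delta})$, which is the claim. The case $\mu = 0$ is trivial. The condition $N \ge 1$ is used only to ensure $N\delta > 0$, so that the denominator is positive.

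The only point needing care is the justification that the principal logarithm is differentiable along the path, that is, that the argument never crosses the negative real axis. This is settled by the observation above that $\Re\bigl(1 - e^{-2\pi(w+t)}\bigr) > 0$. With that in place, the rest is routine.
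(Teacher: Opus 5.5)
Your proof is correct, and it reaches the bound by a slightly different route from the paper's.

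The paper expands both logarithms via $\ln(1-x)=-\sum_{k\geq1}x^k/k$, which is valid since both arguments have modulus $<1$. It writes the difference as $\sum_{k\geq1}k^{-1}e^{-2\pi(N\delta+\imath\beta)k}\bigl(1-e^{-2\pi\mu\delta k}\bigr)$, bounds $0\leq 1-e^{-2\pi\mu\delta k}\leq 2\pi\mu\delta k$ termwise, and sums the geometric series in $e^{-2\pi N\delta}$. You instead integrate $f'$ along the real segment and bound it pointwise using $|1-\zeta|\geq 1-|\zeta|$.

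The two computations are equivalent. Expanding $f'(t)=2\pi\sum_{k\geq1}e^{-2\pi(w+t)k}$ and integrating term by term over $[0,\mu\delta]$ reproduces exactly the paper's series. The termwise inequality $1-e^{-x}\leq x$ is just the mean value bound applied to each summand.

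Your version avoids infinite-series manipulations and handles the branch of the logarithm explicitly. In the paper this is implicit, since on $|x|<1$ the power series of $\ln(1-x)$ is the principal branch. The paper's version fits the Taylor-series technique it already uses in Lemma~\ref{lem:q-pr}. Both arguments actually give the sharper bound with an extra factor $e^{-2\pi N\delta}$ in the numerator. Both use $N\geq 1$ only through $N\delta>0$.
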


\begin{proof}	
	Under the lemma assumptions we have
	\begin{align}
		\bigl| e^{-2\pi (N \delta + \imath \beta)} \bigr| = e^{-2\pi N \delta} < 1, \qquad \bigl| e^{-2\pi (N \delta + \imath \beta + \mu  \delta )} \bigr| = e^{-2\pi (N + \mu)\delta} < 1.
	\end{align}
Therefore one can expand logarithmic functions in the Taylor series
	\begin{align}
		\ln \Bigl( 1 - e^{ -2\pi (N \delta + \imath \beta + \mu \delta)} \Bigr) - \ln \Bigl( 1 - e^{ -2\pi (N \delta + \imath \beta)} \Bigr) = \sum_{k = 1}^\infty \frac{e^{-2\pi (N \delta + \imath \beta) k}}{k} \, \bigl( 1 - e^{-2\pi \mu \delta k} \bigr)
	\end{align}
and estimate on the right-hand side
	\begin{align}
		0 \leq 1 - e^{-2\pi \mu \delta k} \leq 2\pi \mu \delta k.
	\end{align}
	Consequently, the whole sum is bounded by the geometric series
	\begin{align}
		\Biggl| \, \sum_{k = 1}^\infty \frac{e^{-2\pi (N \delta + \imath \beta) k}}{k} \, \bigl( 1 - e^{-2\pi \mu \delta k} \bigr) \Biggr| \leq 2\pi \mu \delta \, \sum_{k = 1}^\infty e^{- 2\pi N\delta k} \leq \frac{2\pi \mu \delta}{1 - e^{-2\pi N \delta}},
	\end{align}
	which concludes the lemma proof.
\end{proof}

\begin{lemma} \label{lem:exp}
	Suppose that some function $f(N, \delta)$ satisfies the bound
	\begin{align}
		|f| \leq \frac{C \delta}{1 - e^{-D N \delta}}
	\end{align}
	uniformly for all integers $N \geq 1$ and $0 < \delta \leq \dm$ with some constants
$C \in (0, \infty)$, $D \in (0, \infty]$ and $\dm>0$. Then the functions $e^{\pm f} - 1$
have similar uniform bounds
	\begin{align}
		\bigl| e^{\pm f} - 1 \bigr| \leq \frac{C_\pm \delta}{1 - e^{-D N \delta}}
	\end{align}
	with some other constants $C_\pm(C,D)> 0$.
\end{lemma}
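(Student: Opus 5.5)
The plan is to work directly with the elementary inequality $|e^w - 1| \leq |w| e^{|w|}$, valid for all $w \in \mathbb{C}$, and split into two regimes according to the size of the bound on $f$. Set
\begin{align*}
	h(N, \delta) = \frac{C \delta}{1 - e^{-D N \delta}},
\end{align*}
with the convention $e^{-\infty} = 0$ when $D = \infty$. Then $|f| \leq h$ and hence $|e^{\pm f} - 1| \leq h \, e^{h}$. So it suffices to show that $h$ is bounded above by a constant $H$ uniformly in $N \geq 1$, $0 < \delta \leq \dm$. Once we have this, the claim follows with $C_\pm = C e^{H}$.

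\textbf{Bounding $h$.} Because $N \geq 1$, we have $N\delta \geq \delta$, so $1 - e^{-DN\delta} \geq 1 - e^{-D\delta}$ and
\begin{align*}
	h(N, \delta) \leq \frac{C\delta}{1 - e^{-D\delta}}.
\end{align*}
For $D = \infty$ the right-hand side is just $C\delta \leq C \dm$. For finite $D$, the function $\delta \mapsto \delta/(1 - e^{-D\delta})$ extends continuously to $\delta = 0$ with value $1/D$, and it is increasing in $\delta$. On the compact interval $[0, \dm]$ it is therefore bounded by $\dm/(1 - e^{-D\dm})$. This gives
\begin{align*}
	H = \frac{C \dm}{1 - e^{-D\dm}}, \qquad\text{or } H = C\dm \text{ if } D = \infty.
\end{align*}

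\textbf{Where care is needed.} The only delicate point is the one just handled. When $N\delta$ is small, the bound $h$ need not be small: it is of order $C/(DN)$, as in the second regime of the remark after Lemma~\ref{lem:q-pr}. So we cannot argue that $f \to 0$. Instead we rely only on uniform boundedness of $h$, which uses $N \geq 1$ in an essential way. The inequality $|e^w - 1| \leq |w|e^{|w|}$ then holds without any smallness assumption. Both signs $\pm f$ are covered at once, since $|\pm f| = |f|$. The new constants depend only on $C$, $D$ and $\dm$, and the same $D$ is kept in the denominator, exactly as claimed.
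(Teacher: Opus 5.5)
Your proof is correct and follows the paper's argument. Both proofs bound $|f|$ uniformly using $N \geq 1$ and the boundedness of $\delta/(1 - e^{-D\delta})$ on $[0, \dm]$. Both then apply a bound $|e^w - 1| \leq C(R)\,|w|$ for $|w| \leq R$, which you make explicit as $|e^w - 1| \leq |w| e^{|w|}$.
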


\begin{proof}
	Since $N \geq 1$ and $\delta/(1 - e^{-D \delta})$ is continuous for $0 \leq \delta \leq \dm$, the function $f$ is uniformly bounded: $|f| \leq \tilde{C}(C,D)$. Besides, for any $R > 0$ and $|w| \leq R$ we have $|e^w - 1| \leq C(R) |w|$. This implies the lemma statement.
\end{proof}
\vspace{-0.1cm}

\begin{lemma} \label{lem:F-prod}
	Suppose functions $F_1(N, \delta)$, $F_2(N, \delta)$ satisfy the bounds
	\begin{align}
		|F_j - 1| \leq \frac{C_j \delta}{1 - e^{-D_j N \delta}}
	\end{align}
	uniformly for all $N \geq 1$, $0 < \delta \leq \dm$ with some $C_j \in (0, \infty)$, $D_j \in (0, \infty]$ and $\dm > 0$. Then their product is uniformly bounded in a similar way
	\begin{align}
		|F_1 F_2 - 1| \leq \frac{C \delta}{1 - e^{-D N \delta}}
	\end{align}
	with some constants $C,D > 0$ depending on $C_j, D_j$.
\end{lemma}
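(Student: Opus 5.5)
The natural approach is to write $F_j = 1 + \epsilon_j$ with $|\epsilon_j| \leq C_j \delta/(1 - e^{-D_j N\delta})$ and expand
\begin{align*}
	F_1 F_2 - 1 = \epsilon_1 + \epsilon_2 + \epsilon_1 \epsilon_2 .
\end{align*}
The two linear terms are already of the desired form, so the work is in bringing all three terms to a common denominator and controlling the quadratic term.

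\emph{Common denominator.} Put $D = \min(D_1, D_2)$, which is finite and positive; if $D_j = \infty$, its denominator is simply $1$. Since $1 - e^{-D_j N\delta} \geq 1 - e^{-D N\delta}$ for $N\delta > 0$, we have
\begin{align*}
	|\epsilon_j| \leq \frac{C_j \delta}{1 - e^{-D N\delta}}, \qquad j = 1, 2.
\end{align*}

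\emph{Quadratic term.} Here I would use the same observation as in the proof of Lemma~\ref{lem:exp}. Because $N \geq 1$ and the denominator increases in $N$,
\begin{align*}
	\frac{C_j \delta}{1 - e^{-D_j N\delta}} \leq \frac{C_j \delta}{1 - e^{-D_j \delta}} .
\end{align*}
The function $\delta \mapsto \delta/(1 - e^{-D_j \delta})$ extends continuously to $[0, \dm]$, with value $1/D_j$ at $\delta = 0$, and it is bounded by $\dm$ if $D_j = \infty$. Hence $|\epsilon_1| \leq K$ for a constant $K$ independent of $N, \delta$. This gives
\begin{align*}
	|\epsilon_1 \epsilon_2| \leq K |\epsilon_2| \leq \frac{K C_2 \delta}{1 - e^{-D N\delta}} .
\end{align*}

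Adding the three bounds yields the claim with $C = C_1 + C_2 + K C_2$ and $D = \min(D_1, D_2)$. There is no real obstacle. The only point needing care is that $\epsilon_1\epsilon_2$ must be bounded by the first power of $\delta/(1 - e^{-DN\delta})$ rather than its square, and this is exactly what the uniform boundedness of $\epsilon_1$ provides.
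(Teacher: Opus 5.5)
Your proof is correct and is the paper's argument: the paper's proof is just the triangle inequality $|F_1F_2-1|\le|F_1-1|\,|F_2-1|+|F_1-1|+|F_2-1|$ combined with the assumed bounds. You make explicit two steps the paper leaves implicit: the common denominator $D=\min(D_1,D_2)$, and the uniform boundedness of $|F_1-1|$, obtained as in Lemma~\ref{lem:exp}. One small nit: if $D_1=D_2=\infty$ the minimum is not finite, but then any finite $D>0$ works, since $1-e^{-DN\delta}\le 1$.
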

\begin{proof}
	The claim follows from the triangle inequalities
	\begin{align}
		| F_1 F_2 - 1 |  \leq | F_1 -1 | \, |F_2 - 1| + |F_1 - 1| + |F_2 - 1|
	\end{align}
	and the assumed bounds on $F_j$.
\end{proof}

Using Notation~\ref{not:ozy}, define
\begin{align}\label{F-factor}
	\begin{aligned}
		f(N, \beta; m, u, \epsilon) &= e^{- \pi \imath Nm - \frac{\pi \imath}{2} m^2} \; \frac{ \gamma^{(2)}(z + y) }{ \gamma^{(2)}(z) } \\[6pt]
		& \times \bigl(2\sh \pi(N \delta + \imath \beta) \bigr)^{-\frac{m + \imath u}{2}} \, \bigl(2\sh \pi(N \delta - \imath \beta) \bigr)^{\frac{m - \imath u}{2}}.
	\end{aligned}
\end{align}
Then the limit~\eqref{ratio-lim} reads
\begin{align} \label{flim}
	f(N, \beta; m, u, \epsilon) \underset{\substack{\delta \to 0^+\\[3pt] N \delta \to \alpha \; \,}}{=} 1.
\end{align}
Since $\epsilon(\delta) = O(1)$, the function
\begin{align}\label{F-f}
	F(N, \beta; m, u, \epsilon) = f(N, \beta; m, u, \epsilon) \, e^{ -\pi\imath |N| \epsilon \delta^2 }
\end{align}
also tends to one
\begin{align} \label{F-lim}
	F(N, \beta; m, u, \epsilon) \underset{\substack{\delta \to 0^+ \\[3pt] N \delta \to \alpha \; \,}}{=} 1.
\end{align}
From the above lemmas we have the following error bounds for both functions. 

\begin{proposition} \label{prop:g-ratio}
	Set $N_0 =  |m| + \Bigl\lfloor | \Im u | \Bigr\rfloor + 2$ and take the functions $F$, $f$ given by \eqref{F-f},~\eqref{F-factor}. Then,
	\begin{itemize}
		\item[\it (i)] there exist constants $C_1, C_2, \dm > 0$ such that
		\begin{align}\label{cor2-ineq}
			\bigl| F^{\pm 1} - 1 \bigr| \leq \frac{C_1 \delta}{1 - e^{-C_2 | N | \delta}}
		\end{align}
		for all $N, \delta, \beta$ satisfying
		\begin{align}
			| N| \geq N_0 + 1, \qquad 0 < \delta \leq \dm, \qquad | \beta | \leq 1;
		\end{align}
		
		\item[\it (ii)] for any fixed $M > 0$ there exist constants $C_1, C_2, \dm > 0$ such that
		\begin{align} \label{cor2-ineq2}
			\bigl| f^{\pm 1} - 1 \bigr| \leq \frac{C_1 \delta}{1 - e^{-C_2 | N | \delta}}
		\end{align}
		for all $N, \delta, \beta$ satisfying
		\begin{align}
			N_0 + 1 \leq |N| \leq \left\lfloor \frac{M}{\delta} \right\rfloor, \qquad 0 < \delta \leq \dm, \qquad | \beta | \leq 1.
		\end{align}
	\end{itemize}	
\end{proposition}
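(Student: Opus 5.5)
The plan is to bootstrap Lemma~\ref{lem:gamma-ratio}, which covers only $N\ge 1$, $|\beta|\le 1/2$ and $m>|\Im u|$, to the full range by three reductions: exponentiation, integer shifts, and reflection. Throughout I abbreviate $\sigma=\sqrt{\omega_1\omega_2}$.

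\emph{Step 1 (the basic case).} Assume $m>|\Im u|$, $N\ge1$, $|\beta|\le 1/2$. By construction of $\Ln$, exponentiating the quantity inside the absolute value in~\eqref{lem2-ineq} gives exactly $F^{-1}$ (or $F$, depending on sign). The phase $e^{-\pi\imath|N|\epsilon\delta^2}$ in~\eqref{F-f} was introduced precisely to cancel the term $\pi\imath N\epsilon\delta^2$ in~\eqref{lem2-ineq}. Lemma~\ref{lem:exp} then yields $|F^{\pm1}-1|\le C_1\delta/(1-e^{-C_2N\delta})$.

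One point must be checked here. Exponentials of principal logarithms of $2\sh$ reproduce the principal-branch powers in~\eqref{F-factor} only up to factors $e^{2\pi\imath j(m+\imath u)/2}$. Since $m$ is an integer and $\Im u$ enters symmetrically, the combined factor is a fixed constant. Because both sides tend to $1$ by~\eqref{F-lim}, this constant must equal $1$.

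\emph{Step 2 (general $m,u$ and $|\beta|\le1$, still $N>0$).} First write $\beta=\beta'+j$ with $j\in\{-1,0,1\}$ and $|\beta'|\le1/2$, and replace $N$ by $N+j$. The resulting change $\delta$ in the argument of $\sh$ is controlled by Lemma~\ref{lem:ln} with $\mu=1$, followed by Lemma~\ref{lem:exp}.

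Next take $L=|m|+\lfloor|\Im u|\rfloor+1$ and set $z'=\imath\sigma(N-L+\beta)$. Factor
\[
\frac{\gamma^{(2)}(z+y)}{\gamma^{(2)}(z)}=\frac{\gamma^{(2)}(z'+y')}{\gamma^{(2)}(z')}\Big/\frac{\gamma^{(2)}(z'+y'')}{\gamma^{(2)}(z')},
\]
where $y'=\imath\sigma(m+L+u\delta+\epsilon\delta^2)$ and $y''=\imath\sigma L$. In both factors the integer part exceeds the absolute value of the imaginary part of the $\delta$-coefficient, so Step~1 applies whenever $N-L\ge1$. This is guaranteed by $N\ge N_0+1$, with the extra unit absorbing $j$.

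The resulting $\sh$-factors are evaluated at $(N-L)\delta$ instead of $N\delta$. I would compare them using Lemma~\ref{lem:ln} with $\mu=L$. The bound $1/(1-e^{-C(N-L)\delta})$ is equivalent to $1/(1-e^{-C'N\delta})$ because $N\ge L+1$. All factors are then combined by Lemma~\ref{lem:F-prod}. The phases $e^{-\pi\imath Nm-\frac{\pi\imath}2m^2}$ recombine to the correct one, up to an integer-dependent sign that is again forced to be $1$ by the pointwise limit~\eqref{ratio-lim}.

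\emph{Step 3 ($N\le-N_0-1$).} Use the reflection relation~\eqref{hgamma-refl}:
\[
\frac{\gamma^{(2)}(z+y)}{\gamma^{(2)}(z)}=\frac{\gamma^{(2)}(\tilde z+\tilde y)}{\gamma^{(2)}(\tilde z)},
\]
with $\tilde z=-z-y+\omega_1+\omega_2$ rewritten as $\imath\sigma(-N-m-\beta)+\dots$. Here $\omega_1+\omega_2=2\delta=\imath\sigma(-2\imath\delta+O(\delta^2))$, so after moving the integer $-m$ into $\tilde y$ one gets the form of Notation~\ref{not:ozy}. The new parameters are $-N>0$, $-\beta$, integer part $m$, and $\delta$-coefficient $u-2\imath$ (or similar), with a $C^1$ remainder. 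Step~2 applies to this, and the parity $\sh(-w)=-\sh w$ turns the limiting expression back into the one in~\eqref{F-factor}. This completes~(i).

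\emph{Part (ii).} Write $f=F\,e^{\pi\imath|N|\epsilon\delta^2}$. For $|N|\le M/\delta$ the exponent is bounded by $\pi M|\epsilon|\delta\le C\delta$, so the factor $e^{\pi\imath|N|\epsilon\delta^2}$ differs from $1$ by $O(\delta)$, and Lemma~\ref{lem:F-prod} gives~(ii).

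I expect the main obstacle to be the branch and phase bookkeeping in Steps~2--3, where principal powers of $2\sh$ may jump by constant unimodular factors. I would settle these uniformly by the limit argument: each such factor is locally constant in the parameters and must tend to $1$.
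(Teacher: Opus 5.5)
Your Steps 1--2 and part (ii) are essentially the paper's argument: the same factorisation with $L=N_0-1$ (the paper's $n_0$), followed by Lemmas~\ref{lem:gamma-ratio}, \ref{lem:ln}, \ref{lem:exp} and~\ref{lem:F-prod}. Your branch worries there are unnecessary. With principal-branch powers, the exponential of the quantity in~\eqref{lem2-ineq} is exactly $F$ for $N\geq1$, and the phases recombine exactly.

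The gap is Step~3. After the reflection~\eqref{hgamma-refl} the denominator is $\gamma^{(2)}(\tilde z)$ with
\[
\tilde z=\omega_1+\omega_2-z-y=\imath\sqrt{\omega_1\omega_2}\,\bigl(-N-m-\beta-(u+2\imath)\delta+O(\delta^2)\bigr),
\]
because $\omega_1+\omega_2=2\delta=\imath\sqrt{\omega_1\omega_2}\,\bigl(-2\imath\delta+O(\delta^3)\bigr)$. The $\delta$-coefficient of this base point has imaginary part $-\Im u-2$, which cannot be absorbed into a real $\beta$. Nor can you move anything from $\tilde z$ into $\tilde y$, since $\gamma^{(2)}(\tilde z)$ itself is the denominator. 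So the reflected ratio is not of the form of Notation~\ref{not:ozy}. Written over the lattice point $\hat z=-z=\imath\sqrt{\omega_1\omega_2}(-N-\beta)$ it is
\[
\frac{\gamma^{(2)}(z+y)}{\gamma^{(2)}(z)}=\frac{\gamma^{(2)}(\hat z+\omega_1+\omega_2)}{\gamma^{(2)}(\hat z+\omega_1+\omega_2-y)}.
\]
This is a quotient of two ratios over $\gamma^{(2)}(\hat z)$, with parameters $(m_1,u_1)=(0,-2\imath)$ and $(m_2,u_2)=(-m,-u-2\imath)$.

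Your $N>0$ result applies to these only when $-N\geq 5$ and $-N\geq |m|+\lfloor|\Im u+2|\rfloor+3$. The second threshold exceeds $N_0+1$ whenever $\Im u>-1$ (by $2$ if $\Im u\geq0$). The first exceeds it whenever $|m|+\lfloor|\Im u|\rfloor\leq1$. Hence some negative $N$ with $|N|\geq N_0+1$ remain uncovered. For those fixed $N$ you would still need a bound on $F^{\pm1}$ uniform in $\beta,\delta$, and that is the whole difficulty, since near $\beta=0$ both the gamma ratio and the $\sh$-powers degenerate.

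The remedy is to reduce $N<0$ to $N>0$ by complex conjugation, as the paper does. Since $\bar\omega_1=\omega_2$ and $\sqrt{\omega_1\omega_2}=\sqrt{1+\delta^2}$ is real, $\overline{\gamma^{(2)}(w)}=\gamma^{(2)}(\bar w)$. By~\eqref{com_conj}, $\bar z=\imath\sqrt{\omega_1\omega_2}(-N-\beta)$ and $\bar y=\imath\sqrt{\omega_1\omega_2}(-m-\bar u\delta-\bar\epsilon\delta^2)$. This is again Notation~\ref{not:ozy} with $(N,\beta,m,u,\epsilon)\mapsto(-N,-\beta,-m,-\bar u,-\bar\epsilon)$, and $N_0$ is unchanged because $|\Im\bar u|=|\Im u|$.

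If you want to keep the reflection, you must first strip the shift by $\omega_1+\omega_2$ in numerator and denominator, using $\gamma^{(2)}(w+\omega_1+\omega_2)=-4\sin\frac{\pi w}{\omega_1}\sin\frac{\pi w}{\omega_2}\,\gamma^{(2)}(w)$. This leaves $\gamma^{(2)}(\hat z)/\gamma^{(2)}(\hat z-y)$, the reciprocal of a ratio with parameters $(-m,-u)$ and the same $N_0$. The cost is a separate uniform estimate of the explicit sine ratios.
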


\begin{remark}
	Note that in the first part we do not assume any upper bound on $N$ (although in the limit~\eqref{F-lim} we take $N\delta \to \alpha$), so that the estimate holds in particular for $|N| \geq M/\delta$ with any fixed $M$ (and sufficiently small $\dm$). This is essential for bounding tails of sums containing ratios of gamma functions, see Sections~\ref{sec:beta-lim-switch},~\ref{sec:con-lim-switch}.
\end{remark}

\begin{proof}
	We begin by proving the estimate for the function $F$~\eqref{cor2-ineq}. 
	It is sufficient to consider only the case $N > 0$. Indeed, we can consider the
	bounds for the complex-conjugate function~$\bar F$. Since
	$\overline{\gamma^{(2)}(z)}=\gamma^{(2)}(\bar z)$ one can repeat the analysis given below 
	after replacing $z$ and $y$ by their complex conjugates. However, according
	to  \eqref{com_conj} this is equivalent to the replacement $N\to -N$ together with
	$\beta\to - \beta,\, m\to -m,\, u\to -\bar{u}$, and the proof given below does not depend
	on the latter three sign changes.

	Moreover, let us first assume $|\beta| \leq 1/2$. Denote $n_0 = N_0 - 1$ and split function $F$ given by~\eqref{F-factor},~\eqref{F-f} into four parts
	\begin{align}\label{Fsplit}
		\begin{aligned}
			& F(N, \beta; m, u, \epsilon) = \frac{F(N - n_0, \beta; m + n_0, u, \epsilon)}{F(N - n_0,\beta; n_0, 0, 0)} \; e^{-\pi \imath n_0 \epsilon \delta^2} \\[6pt]
			& \quad \times \biggl( \frac{ \sh \pi (N \delta + \imath \beta) }{ \sh \pi ([N - n_0] \delta + \imath \beta) } \biggr)^{ -\frac{m + \imath u}{2} } \,  \biggl( \frac{ \sh \pi (N \delta - \imath \beta) }{ \sh \pi ([N - n_0] \delta - \imath \beta) } \biggr)^{ \frac{m - \imath u}{2} }.
		\end{aligned}
	\end{align}
	By our assumptions,
	\begin{align} \label{NN0mu-assump}
		N - n_0 \geq 1, \qquad n_0 > 0, \qquad m + n_0 \geq -|m| + n_0 > | \Im u |.
	\end{align}
	Therefore, we can apply Lemma~\ref{lem:gamma-ratio} for two functions in the first line of~\eqref{Fsplit}
	\begin{align}
		& \bigl| \, \ln F(N - n_0, \beta; m + n_0, u, \epsilon) \bigr| \leq \frac{C_1 \delta}{1 - e^{-D_1 (N - n_0) \delta}} , \\[6pt]
		& \bigl| \, \ln F(N - n_0, \beta; n_0, 0, 0) \bigr| \leq \frac{C_2 \delta}{1 - e^{-D_2 (N - n_0) \delta}}.
	\end{align}
	Besides, the logarithm of exponent from the first line is bounded in obvious way
	\begin{align}
		\Bigl| \, \ln e^{-\pi \imath n_0 \epsilon \delta^2} \Bigr| = \pi n_0 |\epsilon| \delta^2 \leq C_3 \delta,
	\end{align}
	since $\epsilon(\delta) = O(1)$. 	Next rewrite the first ratio of hyperbolic sines
	\begin{align}\label{shsh-ratio}
		\biggl( \frac{ \sh \pi (N \delta + \imath \beta) }{ \sh \pi ([N - n_0] \delta + \imath \beta) } \biggr)^{ -\frac{ m + \imath u}{2} } = e^{-\frac{m + \imath u}{2} \, \pi n_0 \delta } \;  \biggl( \frac{ 1 - e^{-2\pi(N \delta + \imath \beta)} }{ 1 - e^{-2\pi([N - n_0]\delta + \imath \beta)}} \biggr)^{ -\frac{ m + \imath u}{2} },
	\end{align}
	so that for the function in brackets we can use Lemma~\ref{lem:ln} (with $\mu = n_0$)
	\begin{align}
		\Bigl| \, \ln \Bigl( 1 - e^{ -2\pi (N \delta + \imath \beta)} \Bigr) - \ln \Bigl( 1 - e^{ -2\pi ([N - n_0] \delta + \imath \beta)} \Bigr) \Bigr| \leq \frac{2\pi n_0 \delta }{1 - e^{-2\pi (N - n_0)\delta}}.
	\end{align}
	In addition, for the logarithm of exponential function in~\eqref{shsh-ratio} we have
	\begin{align}
		\Bigl| \, \ln e^{-\frac{m + \imath u}{2} \, \pi n_0 \delta } \Bigr| = \frac{|m + \imath u|}{2} \, \pi n_0 \delta \leq C_4 \delta.
	\end{align}
	The second ratio of hyperbolic sines in~\eqref{Fsplit} is estimated in the same way.
	
	Now, since $N \geq n_0 + 1$, we can use inequality
	\begin{align}\label{N0-ineq}
		1 - e^{-D (N - n_0) \delta} \geq 1 - e^{- \frac{D}{n_0 + 1} \, N \delta }
	\end{align}
	for any $D > 0$. As a result, the function $F(N, \beta; m, u, \epsilon)$ \eqref{Fsplit} is factorised into the product of functions $F_j$, whose logarithms are bounded~as
	\begin{align}\label{Fineq}
		| \ln F_j | \leq \frac{C_j \delta}{1 - e^{-D_j N \delta}}
	\end{align}
	with some constants $C_j \in (0, \infty)$, $D_j \in (0, \infty]$. By Lemma~\ref{lem:exp}, we therefore have bounds for the functions $F_j$ themselves
	\begin{align}
		\bigl| F_j^{\pm 1} - 1 \bigr|  \leq \frac{C_{\pm, j} \, \delta}{1 - e^{- D_j N \delta}}.
	\end{align}
	Finally, due to Lemma~\ref{lem:F-prod}, we have the same type of bound for the product of $F_j$, which 
equals $F(N, \beta; m, u, \epsilon)$~\eqref{Fsplit}, and analogously for its reciprocal $F^{-1}$.
	
	Thus, we proved the first statement of the proposition for $|\beta| \leq 1/2$.
If $|\beta| \in (1/2,1]$, we use the variables
	\begin{align}
		\beta' = \beta - \beta/|\beta|, \qquad  N' = N + \beta/|\beta|
	\end{align}
	instead of $\beta$ and $N$. Then we have $|\beta'|\leq 1/2$ and $N' \geq n_0 + 1$, which coincides with the conditions we used before. So, this case reduces to the previous one.
	
	Next we consider the second statement of proposition~\eqref{cor2-ineq2} with $N > 0$ (as before, the opposite case $N< 0$ follows by complex conjugation). The function $f$ is related $F$ in a very simple way \eqref{F-f}.
In this case we assume the upper bound $N \leq \lfloor M/\delta \rfloor$, hence,
	\begin{align}
		\Bigl| \, \ln e^{ \pm \pi\imath N \epsilon \delta^2 } \Bigr| = \pi N \delta \, | \epsilon | \delta \leq C \delta.
	\end{align}
	Therefore, one can use already proven bound on $F^{\pm 1}$ and again invoke Lemmas~\ref{lem:exp},~\ref{lem:F-prod} to obtain the same type of estimate for $f^{\pm 1}$.
\end{proof}

For large $|N|$ the above proposition implies the following useful corollary.

\begin{notation} \label{not:ozy2}
	Parametrize
	\begin{align} \label{param-2}
		\begin{aligned}
			& \quad \omega_1 = \bar{\omega}_2 = \imath + \delta, && \qquad\quad  \delta > 0, \\[6pt]
			& \quad z = \imath \sqrt{\omega_1 \omega_2} (N + \beta), && \qquad\quad N \in \mathbb{Z}, &&  \quad \beta \in \mathbb{R}, \\[6pt]
			& \quad y_j = \imath \sqrt{\omega_1 \omega_2} (m_j + u_j \delta + \epsilon_j(\delta) \delta^2), && \qquad\quad m_j \in \mathbb{Z}, &&  \quad u_j \in \mathbb{C}, && \quad j = 1,2,
		\end{aligned}
	\end{align}
	where $\epsilon_j(\delta) \in C^1[0, \dm]$ with some $\dm > 0$.
\end{notation}

\begin{corollary} \label{cor:g-ratio-bigN}
	With Notation~\ref{not:ozy2}, for any $\nu > 0$ there exist $C, \dm > 0$ such that
	\begin{align}
		\biggl| \frac{ \gamma^{(2)}(z + y_1) }{ \gamma^{(2)}(z + y_2) } \biggr| \leq C \, e^{\pi |N| \Im \bigl(\epsilon_2(\delta) - \epsilon_1(\delta)\bigr) \delta^2 } \, \Bigl| 2\sh \pi ( N\delta + \imath \beta) \Bigr|^{ \Im (u_2 -  u_1)}
	\end{align}
	for all $N, \delta, \beta$ satisfying
	\begin{align}
		| N | \geq \biggl\lfloor \frac{\nu}{\delta} \biggr\rfloor, \qquad 0 < \delta \leq \dm, \qquad | \beta | \leq 1.
	\end{align}
\end{corollary}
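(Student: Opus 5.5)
The plan is to reduce the two-gamma ratio to two copies of the function $F$ from~\eqref{F-f}, and then apply Proposition~\ref{prop:g-ratio}\textit{(i)} to each. Write
\begin{align*}
	\frac{\gamma^{(2)}(z + y_1)}{\gamma^{(2)}(z + y_2)} = \frac{\gamma^{(2)}(z + y_1)}{\gamma^{(2)}(z)} \; \frac{\gamma^{(2)}(z)}{\gamma^{(2)}(z + y_2)} .
\end{align*}
By definitions~\eqref{F-factor},~\eqref{F-f}, each ratio equals $F(N,\beta;m_j,u_j,\epsilon_j)^{\pm1}$ times explicit factors:
\begin{align*}
	\frac{\gamma^{(2)}(z + y_j)}{\gamma^{(2)}(z)} = F(N, \beta; m_j, u_j, \epsilon_j) \, e^{\pi \imath N m_j + \frac{\pi \imath}{2} m_j^2 + \pi \imath |N| \epsilon_j \delta^2} \bigl(2\sh \pi(N\delta + \imath\beta)\bigr)^{\frac{m_j + \imath u_j}{2}} \bigl(2\sh \pi(N\delta - \imath\beta)\bigr)^{\frac{-m_j + \imath u_j}{2}} .
\end{align*}
Taking absolute values kills the unimodular factors $e^{\pi \imath N m_j + \frac{\pi\imath}{2}m_j^2}$. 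The factor $|e^{\pi\imath|N|\epsilon_j\delta^2}| = e^{-\pi|N|\Im\epsilon_j\,\delta^2}$ survives, and in the quotient it combines into exactly the claimed $e^{\pi|N|\Im(\epsilon_2-\epsilon_1)\delta^2}$.

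Next come the powers of the hyperbolic sines. Write $w = 2\sh\pi(N\delta+\imath\beta)$, so that $\bar w = 2\sh\pi(N\delta-\imath\beta)$ and $|w|=|\bar w|$. Then
\begin{align*}
	\bigl|w^{\frac{m+\imath u}{2}}\,\bar w^{\frac{-m+\imath u}{2}}\bigr| = |w|^{-\Im u}\, e^{-\frac{\Re(\imath u) \cdot 0}{1}}\, e^{\frac{m}{2}(\arg w... )}
\end{align*}
More carefully, with principal branches, $|w^a| = |w|^{\Re a} e^{-\Im a \arg w}$ and $\arg\bar w = -\arg w$ (away from the cut). The moduli contribute $|w|^{\Re(\imath u)} = |w|^{-\Im u}$. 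The argument terms give $e^{-\frac{\Re u}{2}\arg w}\, e^{+\frac{\Re u}{2}\arg w} = 1$, up to a bounded factor $e^{O(|\Re u|\pi)}$ if $w$ lies on the negative real axis, since $\arg\in(-\pi,\pi]$. Consequently the sine factors give
\begin{align*}
	C \, |w|^{-\Im u_1} \, |w|^{\Im u_2} = C\,|2\sh\pi(N\delta+\imath\beta)|^{\Im(u_2-u_1)} ,
\end{align*}
which is the claimed power.

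It remains to show that $|F(N,\beta;m_1,u_1,\epsilon_1)|$ and $|F(N,\beta;m_2,u_2,\epsilon_2)^{-1}|$ are uniformly bounded. Proposition~\ref{prop:g-ratio}\textit{(i)} gives $|F^{\pm1}-1|\le C_1\delta/(1-e^{-C_2|N|\delta})$ for $|N|\ge N_0+1$. Under the hypothesis $|N|\ge\lfloor\nu/\delta\rfloor$, we have $|N|\delta\ge \nu-\delta\ge\nu/2$ for small $\dm$, so the right-hand side is at most $C_1\dm/(1-e^{-C_2\nu/2})$, a constant. Also $|N|\ge\lfloor\nu/\delta\rfloor\ge N_0+1$ once $\dm$ is small enough, with $N_0$ taken as the maximum over $j=1,2$. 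Hence $|F^{\pm1}|\le 1+\text{const}$.

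The main obstacle is handling the branches consistently. The identity above for the ratio in terms of $F$ holds only because the principal-branch powers in~\eqref{F-factor} are used consistently; I just take~\eqref{F-factor} as the definition and invert it, so no $\Ln$ ambiguity enters. The remaining subtlety is the bounded $\arg$ contribution when $\Re u_j\ne0$, which I absorb into $C$ as above. The requirement $|\beta|\le1$ is already covered by the proposition. The $e^{\pi|N|\Im(\epsilon_2-\epsilon_1)\delta^2}$ factor must not be absorbed into $C$, because $N$ is unbounded here.
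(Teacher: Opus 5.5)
Your proof is correct and follows the paper's argument: you express the ratio through \eqref{F-factor}, \eqref{F-f} as $|F(N,\beta;m_1,u_1,\epsilon_1)/F(N,\beta;m_2,u_2,\epsilon_2)|$ times $e^{\pi|N|\Im(\epsilon_2-\epsilon_1)\delta^2}|2\sh\pi(N\delta+\imath\beta)|^{\Im(u_2-u_1)}$, then bound $F^{\pm1}$ uniformly by Proposition~\ref{prop:g-ratio}\textit{(i)} after shrinking $\dm$ so that $\lfloor\nu/\dm\rfloor\ge N_0+1$ for both $j$. The differences are only cosmetic. You bound $C_1\delta/(1-e^{-C_2|N|\delta})$ via $|N|\delta\ge\nu/2$, whereas the paper uses $|N|\ge1$ and continuity of $\delta/(1-e^{-C_2\delta})$. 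You also absorb into $C$ the constant $e^{\pi\Re(u_2-u_1)}$ that appears when $2\sh\pi(N\delta+\imath\beta)$ is negative real (isolated $\beta$), which the paper's identity \eqref{gg-r} silently ignores; just delete the abandoned garbled display before ``More carefully''.
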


\begin{proof}
	By definitions~\eqref{F-factor},~\eqref{F-f},
	\begin{align}\label{gg-r}
		\biggl| \frac{ \gamma^{(2)}(z + y_1) }{ \gamma^{(2)}(z + y_2) } \biggr| = \biggl| \frac{F(N, \beta; m_1, u_1, \epsilon_1)}{F(N, \beta; m_2, u_2, \epsilon_2)} \biggr|  \; e^{\pi |N| \Im \bigl(\epsilon_2(\delta) - \epsilon_1(\delta)\bigr) \delta^2 } \, \Bigl| 2\sh \pi ( N\delta + \imath \beta) \Bigr|^{ \Im (u_2 -  u_1)}.
	\end{align}
	Function $F$ and its reciprocal are uniformly bounded due to Proposition~\ref{prop:g-ratio}
	\begin{align}
		\bigl| F^{\pm 1} \bigr| \leq 1 + \bigl| F^{\pm1} -1 \bigr| \leq 1 + \frac{C_1 \delta}{1 - e^{-C_2 |N| \delta}} \leq 1 + \frac{C_1 \delta}{1 - e^{-C_2 \delta}} \leq C,
	\end{align}
	where we use assumption $|N| \geq 1$ (choose $\dm \leq 1$) and the fact that function $C_1 \delta/(1 - e^{-C_2 \delta})$ is continuous for $\delta \in [0, \dm]$. Hence, the ratio of functions $F$ in \eqref{gg-r} is also uniformly bounded, which leads to the claimed statement. Notice that by assumption of this corollary
	\begin{align}
		|N| \geq \biggl\lfloor \frac{\nu}{\delta} \biggr\rfloor \geq \left\lfloor \frac{\nu}{\dm} \right\rfloor.
	\end{align}
	Therefore, one can choose $\dm$, so that the lower bound for $|N|$ from Proposition~\ref{prop:g-ratio} is satisfied, that is
	\begin{align}
		\left\lfloor \frac{\nu}{\dm} \right\rfloor \geq  |m_j| + \Bigl\lfloor | \Im u_j | \Bigr\rfloor + 3
	\end{align}
	for both $j = 1,2$.
\end{proof}

As one can see from Proposition~\ref{prop:g-ratio}, the drawback of removing restrictions on the parameters $m,u$ is the exclusion of small $|N|$. However, we need some uniform bound for these values too (including $N = 0$) to deal with the limits of hyperbolic integrals. For this we prove the following statement. Denote
\begin{align}
	\sign(N) = \left\{
	\begin{aligned}
		& + 1 && \;\; N \geq 0,\\
		& -1 && \;\; N < 0.
	\end{aligned} \right.
\end{align}

\begin{corollary} \label{cor:g-ratio-smallN}
	Using Notation~\ref{not:ozy2}, assume that $\Im u_1, \Im u_2 \not\in \mathbb{Z}$.
	Then for any $N_0 \in \mathbb{Z}_{>0}$ there exist some constants \mbox{$C, \dm > 0$} 
and $k \in \mathbb{Z}_{>0}$ such that
	\begin{align}\label{cor3-ineq}
		\biggl| \frac{ \gamma^{(2)}(z + y_1) }{ \gamma^{(2)}(z + y_2) } \biggr| \leq C \, \Bigl| 2\sh \pi ( N\delta + 2k \sign(N) \delta + \imath \beta) \Bigr|^{ \Im (u_2 -  u_1)}
	\end{align}
	for all $N, \delta, \beta$ satisfying
	\begin{align}
		| N | \leq N_0, \qquad 0 < \delta \leq \dm, \qquad | \beta | \leq 1.
	\end{align}
\end{corollary}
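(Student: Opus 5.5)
The plan is to use the difference equations to move the point $z$ away from the pinching region $|N|\leq N_0$. Once there, Proposition~\ref{prop:g-ratio} applies, and the extra factors produced by the shift are elementary trigonometric functions, which we control directly.

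\emph{Choice of shift.} Since $\sqrt{\omega_1\omega_2}=\sqrt{1+\delta^2}$ is real, we have
\begin{align*}
\omega_1-\omega_2=2\imath=\imath\sqrt{\omega_1\omega_2}\,\bigl(2+c(\delta)\delta^2\bigr),\qquad c\in C^1[0,\dm].
\end{align*}
Thus replacing $z\mapsto z+k\sign(N)(\omega_1-\omega_2)$ amounts to $N\mapsto N'=N+2k\sign(N)$, with the $O(\delta^2)$ remainder absorbed into $\epsilon_j$. We fix $k\in\mathbb{Z}_{>0}$ so large that $|N'|\geq N_0'+1$ for all $|N|\leq N_0$, where $N_0'=\max_j(|m_j|+\lfloor|\Im u_j|\rfloor+2)$ is the threshold of Proposition~\ref{prop:g-ratio}, taking the threshold for the shifted parameters if they change. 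Iterating $\gamma^{(2)}(w+\omega_2)=2\sin(\pi w/\omega_1)\,\gamma^{(2)}(w)$ and $\gamma^{(2)}(w-\omega_1)=\gamma^{(2)}(w)/2\sin(\pi(w-\omega_1)/\omega_2)$ (or their inverses for $N<0$), we write
\begin{align*}
\frac{\gamma^{(2)}(z+y_1)}{\gamma^{(2)}(z+y_2)}=\frac{\gamma^{(2)}(z'+\tilde y_1)}{\gamma^{(2)}(z'+\tilde y_2)}\;\prod_{\ell=1}^{k}S_\ell(z),
\end{align*}
where $z'=\imath\sqrt{\omega_1\omega_2}(N'+\beta)$, the $\tilde y_j$ have the form of Notation~\ref{not:ozy2} with the same $m_j,u_j$ and modified $\epsilon_j$, and each $S_\ell$ is a ratio of sines. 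The sine arguments for $y_1$ and $y_2$ differ only in $y_1$ versus $y_2$, so each sine depending on $y_1$ sits over the matching sine depending on $y_2$.

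\emph{Main ratio.} Because $N_0'+1\leq|N'|\leq N_0+2k$ is bounded, the factor $e^{\pi|N'|\Im(\tilde\epsilon_2-\tilde\epsilon_1)\delta^2}$ is bounded. Using $|F^{\pm1}|\leq C$ from Proposition~\ref{prop:g-ratio}(i), exactly as in the proof of Corollary~\ref{cor:g-ratio-bigN}, we obtain
\begin{align*}
\biggl|\frac{\gamma^{(2)}(z'+\tilde y_1)}{\gamma^{(2)}(z'+\tilde y_2)}\biggr|\leq C\,\bigl|2\sh\pi(N'\delta+\imath\beta)\bigr|^{\Im(u_2-u_1)}.
\end{align*}
This is precisely the right-hand side of \eqref{cor3-ineq}.

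\emph{Sine factors (main obstacle).} It remains to show that $\prod_\ell|S_\ell|$ is bounded above, uniformly in $|N|\leq N_0$, $|\beta|\leq1$ and $0<\delta\leq\dm$. Each sine has the form $\sin\bigl(\pi\,\imath\sqrt{\omega_1\omega_2}\,(n+\beta+m_j+(u_j-\imath p)\delta+O(\delta^2))/\omega_{1,2}\bigr)$ with $n,p$ ranging over finitely many integers. Using $\imath\sqrt{\omega_1\omega_2}/\omega_{1,2}=\pm1+O(\delta)$, these are $\pm\sin\pi\bigl(n'+\beta+(u_j-\imath p')\delta+O(\delta^2)\bigr)$ with integer $n',p'$.
\begin{itemize}
\item Away from $\beta\in\mathbb{Z}$, both numerator and denominator sines are bounded away from zero, so the ratio is bounded.
\item Near $\beta=\beta_0\in\mathbb{Z}$, each sine behaves like $\pi\bigl(\beta-\beta_0+(u_j-\imath p')\delta+O(\delta^2)\bigr)$. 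The ratio then looks like $\bigl|t+(u_1-\imath p')\delta\bigr|/\bigl|t+(u_2-\imath p')\delta\bigr|$ with $t$ real.
\item The hypothesis $\Im u_j\notin\mathbb{Z}$ guarantees $|\Im(u_2-\imath p')|\geq c>0$, so the denominator is at least $c\delta$ and at least $c'|t|$. Hence this ratio is bounded uniformly in $t$ and $\delta$.
\end{itemize}
Carrying out this case analysis carefully, keeping track of the $O(\delta)$ corrections in $\imath\sqrt{\omega_1\omega_2}/\omega_{1,2}$ and choosing $\dm$ small, is the step I expect to need the most care. Combining the bounded sine product with the main-ratio bound yields \eqref{cor3-ineq}.
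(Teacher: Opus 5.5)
Your proposal is correct and is essentially the paper's proof: you shift by $k(\omega_1-\omega_2)=2\imath k$ so that $N\mapsto N+2k\sign(N)$ lands in the range of Proposition~\ref{prop:g-ratio}, and you bound the finitely many sine ratios produced by the difference equations using $\Im u_j\notin\mathbb{Z}$. The differences are cosmetic. The paper first reduces to $N\geq 0$ and $|\beta|\leq 1/2$, and it pairs the $\omega_1$- and $\omega_2$-sines belonging to the same $y_j$ with two-sided bounds, whereas you pair same-type sines across $j$. In your pairing, note that one of the two families has the $y_1$-sine in the denominator, so you genuinely need both $\Im u_1\notin\mathbb{Z}$ and $\Im u_2\notin\mathbb{Z}$, not only the condition on $u_2$.
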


\begin{proof}
	As in the proof of Proposition~\ref{prop:g-ratio}, it is sufficient to consider $N \geq 0$, the opposite case 
follows from the  complex conjugation. Besides, it is sufficient to consider the case $|\beta| \leq 1/2$, since 
in the case $|\beta| \in (1/2, 1]$ we can shift the parameters 
$\beta' = \beta - \beta/|\beta|$, $m_j' = m_j + \beta/|\beta|$.
	
	Take the smallest $k \in \mathbb{Z}_{>0}$ such that
	\begin{align}
		2k \geq  |m_j| + \Bigl\lfloor | \Im u_j | \Bigr\rfloor + 3
	\end{align}
	for both $j = 1,2$ and factorise function in question
	\begin{align} \label{g-ratio2}
		\begin{aligned}
			\frac{\gamma^{(2)}(z + y_1)}{\gamma^{(2)}(z + y_2)} & = \frac{\gamma^{(2)}(z + y_1 + k\omega_1 - k \omega_2)}{\gamma^{(2)}(z + 2\imath \sqrt{\omega_1 \omega_2} k)} \; \frac{\gamma^{(2)}(z + 2\imath \sqrt{\omega_1 \omega_2} k)}{\gamma^{(2)}(z + y_2 + k\omega_1 - k \omega_2)} \\[6pt]
			& \times \frac{\gamma^{(2)}(z + y_1)}{\gamma^{(2)}(z + y_1 + k\omega_1 - k \omega_2)} \; \frac{\gamma^{(2)}(z + y_2 +  k\omega_1 - k \omega_2)}{\gamma^{(2)}(z + y_2)}.
		\end{aligned}
	\end{align}
	Notice that
	\begin{align}
		k\omega_1 - k\omega_2 = 2\imath k = \imath \sqrt{\omega_1 \omega_2} \bigl(2k + O(\delta^2)\bigr).
	\end{align}
	Since $N + 2k \geq |m_j| + \bigl\lfloor | \Im u_j | \bigr\rfloor + 3$, we can use Proposition~\ref{prop:g-ratio} for the first two ratios in~\eqref{g-ratio2}. Indeed,
	\begin{align}\label{g-F}
		\begin{aligned}
			\biggl| \frac{\gamma^{(2)}(z + y_j + k\omega_1 - k \omega_2)}{\gamma^{(2)}(z + 2\imath \sqrt{\omega_1 \omega_2} k)} \biggr| & = \bigl| f(N + 2k, \beta; m_j, u_j, \tilde{\epsilon}_j) \bigr| \\[6pt]
			& \times \Bigl| 2\sh \pi ( N\delta + 2k\delta + \imath \beta) \Bigr|^{ -\Im u_j},
		\end{aligned}
	\end{align}
	and from Proposition~\ref{prop:g-ratio} we deduce that both $f$ and $1/f$ are uniformly bounded.
	Hence,
	\begin{multline}\label{g-bound}
		\biggl| \frac{\gamma^{(2)}(z + y_1 + k\omega_1 - k \omega_2)}{\gamma^{(2)}(z + 2\imath \sqrt{\omega_1 \omega_2} k)} \; \frac{\gamma^{(2)}(z + 2\imath \sqrt{\omega_1 \omega_2} k)}{\gamma^{(2)}(z + y_2 + k\omega_1 - k \omega_2)} \biggr| \\[6pt]
		\leq C \, \Bigl| 2\sh \pi ( N\delta + 2k\delta + \imath \beta) \Bigr|^{ \Im u_2 - \Im u_1}.
	\end{multline}
	It is left to analyse two last ratios in~\eqref{g-ratio2}. First, using difference equations for gamma functions, we rewrite them as
	\begin{align}
		\frac{\gamma^{(2)}(z + y_j +  k\omega_1 - k \omega_2)}{\gamma^{(2)}(z + y_j)} = (-1)^k \, \prod_{s = 0}^{k - 1} \frac{ \sin \frac{\pi}{\omega_1} (z + y_j - \omega_2 - s \omega_2) }{ \sin \frac{\pi}{\omega_2} (z + y_j + s \omega_1) }.
	\end{align}
	Let us argue that under the assumption $\Im u_j \not\in \mathbb{Z}$ each factor in this product is bounded from both sides
	\begin{align}\label{s-ratio}
		C_1 \leq \Biggl|  \frac{ \sin \frac{\pi}{\omega_1} (z + y_j - \omega_2 - s \omega_2) }{ \sin \frac{\pi}{\omega_2} (z + y_j + s \omega_1) } \Biggr| \leq C_2, \qquad C_1, C_2 > 0.
	\end{align}
	Notice that together with the formula~\eqref{g-bound} this implies the statement~\eqref{cor3-ineq}.
	
	To prove~\eqref{s-ratio} we use the fact that
$		|\sin\, x|^2 =  \sin^2(\Re x) + \sh^2(\Im x).$
	In our case
	\begin{align}
		& \biggl| \sin \frac{\pi}{\omega_1} (z + y_j - \omega_2 - s \omega_2) \biggr|^2 =  \sin^2 \bigl( \pi \bigl[\beta + O(\delta) \bigr]  \bigr) + \sh^2 \bigl( \pi \delta \bigl[ \beta - a_j + O(\delta) \bigr] \bigr), \\[6pt]
		& \biggl| \sin \frac{\pi}{\omega_2} (z + y_j + s \omega_1) \biggr|^2 =  \sin^2 \bigl( \pi \bigl[\beta + O(\delta) \bigr]  \bigr) + \sh^2 \bigl( \pi \delta \bigl[ \beta - b _j + O(\delta) \bigr] \bigr),
	\end{align}
	where we denoted
	\begin{align}		& 
a_j: = - N - m_j - 2s + \Im u_j , 
\quad b_j: = -N - m_j - 2s - 2 - \Im u_j.
	\end{align}
	By the assumption $\Im u_j \not\in \mathbb{Z}$, hence, 
$a_j \not= 0$, $ b_j \not= 0$.
	In the double inequality~\eqref{s-ratio}, which now can be rewritten as
	\begin{align}\label{sinsin-b}
		C_1^2 \leq \frac{ \sin^2 \bigl( \pi \bigl[\beta + O(\delta) \bigr]  \bigr) + \sh^2 \bigl( \pi \delta \bigl[ \beta - a _j + O(\delta) \bigr] \bigr) }{ \sin^2 \bigl( \pi \bigl[\beta + O(\delta) \bigr]  \bigr) + \sh^2 \bigl( \pi \delta \bigl[ \beta - b _j + O(\delta) \bigr] \bigr) } \leq C_2^2,
	\end{align}
	it is sufficient to estimate the upper bound, since the lower one is of the same type.
	
	For the upper bound we need to verify 
that the denominator is bounded from below. This is plausible since $| \beta | \leq 1/2$ and $b_j \not= 0$. For simplicity, first consider the case \mbox{$|b_j| > 1/2$}. The opposite case is analogous, but requires additional small tricks.
	
For $|b_j| > 1/2$ and small enough $\delta \in [0, \dm]$ the combination of parameters in 
the denominator is bounded from below
	\begin{align}\label{beta-b}
		| \beta - b_j + O(\delta) | \geq | b_j | - | \beta | - C\dm \geq | b_j | - \frac{1}{2} - C \dm
	\end{align}
	uniformly in $\beta, \delta$. For the rest of arguments we have
	\begin{align}
		|  \beta - a _j + O(\delta) | \leq A, \qquad |\beta| - B_1\delta \leq | \beta + O(\delta) | \leq |\beta| + B_2 \delta.
	\end{align}
	Besides, for small enough $x > 0$ we have the inequalities
	\begin{align}\label{sinsh}
		\frac{x}{2} \leq \sin x \leq x, \qquad x \leq \sh x \leq 2x,
	\end{align}
	using which we consequently obtain
	\begin{align}
		\frac{ \sin^2 \bigl( \pi \bigl[\beta + O(\delta) \bigr]  \bigr) + \sh^2 \bigl( \pi \delta \bigl[ \beta - a _j + O(\delta) \bigr] \bigr) }{ \sin^2 \bigl( \pi \bigl[\beta + O(\delta) \bigr] \bigr) + \sh^2 \bigl( \pi\delta \bigl[ \beta - b _j + O(\delta) \bigr] \bigr) } \leq C \, \frac{ (|\beta| + B_2\delta)^2 + \delta^2 }{ (|\beta| - B_1\delta)^2 + \delta^2 }
	\end{align}
with some constant $C$ uniform in $\beta, \delta$. Writing the last ratio in terms of the variable 
$ \tilde{\beta} = \beta / \delta$, we obtain the function bounded on the whole real line $\tilde{\beta} \in \mathbb{R}$
	\begin{align}
		\frac{ ( |\tilde{\beta}| + B_2)^2 + 1 }{ (|\tilde{\beta}| - B_1)^2 + 1 } \leq D.
	\end{align}
	This proves the desired bound in the case $|b_j| > 1/2$.
	
	Now suppose $| b_j | \leq 1/2$. As we noted earlier, $b_j \not=0$. Besides,
	\begin{align}
		| \beta - b_j + O(\delta) | \leq | \beta - b_j | + C \dm.
	\end{align}
	Clearly, we can choose small enough $\dm$, so that
	\begin{align}
		0 \not\in [b_j - 2C\dm, b_j + 2C\dm].
	\end{align}
	For $\beta \not\in [b_j - 2C\dm, b_j + 2C\dm]$ the following combination
	\begin{align}
		| \beta - b_j + O(\delta) | \geq | \beta - b_j | - C \dm \geq C \dm > 0
	\end{align}
	is bounded from below, as in the previous case~\eqref{beta-b}. Hence, for such $\beta$ the ratio in question is uniformly bounded by the same arguments.
	
	It is left to analyse the values $\beta \in [b_j - 2C\dm, b_j + 2C\dm]$. Notice that the numerator 
is clearly bounded uniformly in $\beta, \delta$
	\begin{align}
		\sin^2 \bigl( \pi \bigl[\beta + O(\delta) \bigr]  \bigr) + \sh^2 \bigl( \pi \delta \bigl[ \beta - a _j + O(\delta) \bigr] \bigr) \leq A.
	\end{align}
	For the denominator let us simply use the fact
$		\sh^2 \bigl( \pi\delta \bigl[ \beta - b _j + O(\delta) \bigr] \bigr) \geq 0,
$
	and, as before, the bound for small enough $\delta$
	\begin{align}
		\sin^2 \bigl( \pi \bigl[\beta + O(\delta) \bigr] \bigr) \geq \frac{\pi^2}{4} \bigl[\beta + O(\delta) \bigr]^2 \geq \frac{\pi^2}{4} (|\beta| - B_1\delta)^2.
	\end{align}
	Consequently,
	\begin{align}\label{sinsin-r}
		\frac{ \sin^2 \bigl( \pi \bigl[\beta + O(\delta) \bigr]  \bigr) + \sh^2 \bigl( \pi \delta \bigl[ \beta - a _j + O(\delta) \bigr] \bigr) }{ \sin^2 \bigl( \pi \bigl[\beta + O(\delta) \bigr] \bigr) + \sh^2 \bigl( \pi\delta \bigl[ \beta - b _j + O(\delta) \bigr] \bigr) } \leq \frac{ \tilde{A} }{ (|\beta| - B_1\delta)^2 }.
	\end{align}
	Since $\beta$ belongs to the compact interval $[b_j - 2C\dm, b_j + 2C\dm]$, which does not contain $\beta = 0$, 
the denominator is bounded from below for small enough $\delta$:
$		|\beta| - B_1 \delta \geq \tilde{B} > 0.
$ 
	Thus, the ratio~\eqref{sinsin-r} is also bounded uniformly in $\beta, \delta$. This concludes the proof 
of the upper bound~\eqref{sinsin-b}.
\end{proof}

\section{Riemann sums and improper integrals} \label{sec:riem}

Approximations of improper Riemann integrals by the Riemann sums can fail in general, since two involved limits may not commute. In this section we check such approximations for particular functions of interest.
We start by considering the integral
\begin{align}
	I(\delta, a) = \delta \int_{- \frac{1}{2}}^{\frac{1}{2}} (\delta^2 + \beta^2)^{-a} \, d\beta
\end{align}
and proving the following auxiliary lemma.

\begin{lemma} \label{lem:I-bound}
	Let $a \in (0,1)$ and $\delta \in (0, 1]$. There exist constants $C_1(a), C_2(a)$ such that
	\begin{align}
		I(\delta, a) \leq C_1(a) \, \delta^{2(1 - a)} + C_2(a) \, \delta \ln \frac{1}{\delta}.
	\end{align}
\end{lemma}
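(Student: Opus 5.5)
By symmetry of the integrand in $\beta$, we have
\begin{align*}
	I(\delta, a) = 2\delta \int_0^{\frac{1}{2}} (\delta^2 + \beta^2)^{-a} \, d\beta .
\end{align*}
The plan is to split the interval of integration at $\beta = \delta$; since $\delta \le 1$ we have $\min(\delta, 1/2) \le \delta$, so the split point lies inside or at the end of the interval. On $[0, \min(\delta, 1/2)]$ we drop $\beta^2$ and use $(\delta^2 + \beta^2)^{-a} \le \delta^{-2a}$. Since the length of this piece is at most $\delta$, its contribution is at most
\begin{align*}
	2\delta \cdot \delta \cdot \delta^{-2a} = 2\,\delta^{2(1-a)} .
\end{align*}
This gives the first term with $C_1 = 2$.

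On the remaining piece $[\delta, 1/2]$, which is nonempty only when $\delta < 1/2$, we drop $\delta^2$ and use $(\delta^2 + \beta^2)^{-a} \le \beta^{-2a}$. Since $2a \ne 1$ in general, this integral is elementary:
\begin{align*}
	\int_\delta^{\frac{1}{2}} \beta^{-2a} \, d\beta = \frac{(1/2)^{1-2a} - \delta^{1-2a}}{1-2a} .
\end{align*}
We then treat three cases. For $a < 1/2$ the integral is bounded by a constant, so the contribution is $O(\delta)$, which is at most $C\,\delta \ln(1/\delta)$ for $\delta \le 1/2$. For $a > 1/2$ it is bounded by $\delta^{1-2a}/(2a-1)$, so multiplying by $2\delta$ gives $C\,\delta^{2(1-a)}$, which can be absorbed into $C_1$. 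For $a = 1/2$ the integral equals $\ln\bigl(1/(2\delta)\bigr) \le \ln(1/\delta)$, which produces exactly the $\delta \ln(1/\delta)$ term.

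There is one subtle point in the regime $\delta \in [1/2, 1]$, where $\ln(1/\delta)$ may vanish at $\delta = 1$. There the first piece already covers the whole interval, so the bound $2\delta^{2(1-a)}$ alone suffices. No step is a genuine obstacle; the only care needed is the case split in $a$ around $1/2$ and allowing the constants to depend on $a$, in particular the factor $1/|1-2a|$.
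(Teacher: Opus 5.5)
Your proof is correct and is essentially the paper's argument: the paper substitutes $\beta = x\delta$ and splits at $x = 1/2$ (that is, at $\beta = \delta/2$), bounds $(1+x^2)^{-a} \le x^{-2a}$ on the tail, and separates the cases $a = 1/2$ and $a \neq 1/2$, which is what you do with the split at $\beta = \delta$. Your explicit handling of $\delta \in [1/2, 1]$, and of turning the $O(\delta)$ contribution for $a < 1/2$ into a multiple of $\delta \ln(1/\delta)$, fills in a small step the paper leaves implicit.
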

\begin{remark}
	Depending on $a$ each of the terms can dominate as $\delta \to 0^+$.
\end{remark}
\begin{proof}
	Change the integration variable $\beta = x \delta$
	\begin{align}
		I(\delta, a)  = 2\delta^{2(1- a)} \int_0^{\frac{1}{2\delta}}  (1 + x^2)^{-a} \, dx.
	\end{align}
	Since $a \in (0, 1)$, we have
	\begin{align}
		\begin{aligned}
			\int_0^{\frac{1}{2\delta}}  (1 + x^2)^{-a} \, dx & \leq \int_0^{\frac{1}{2}} (1 + x^2)^{-a} \, dx + \int_{\frac{1}{2}}^{\frac{1}{2\delta}} x^{-2a} \, dx \\[8pt]
			& = \left\{ \begin{aligned}
				& C_1(a) + C_2(a) \, \delta^{2a - 1}, && \quad a \ne \frac{1}{2}, \\[6pt]
				& C_3 + \ln \frac{1}{\delta}, &&\quad a = \frac{1}{2}.
			\end{aligned} \right.
		\end{aligned}
	\end{align}
	From this estimate we obtain the claim.
\end{proof}

\subsection{Beta integral} \label{sec:riem-sum-beta}
Define
\begin{align}
	\Gd(\alpha) = \int_{- \frac{1}{2}}^{\frac{1}{2}} \frac{e^{-2\pi \imath (\alpha v + \beta k)}}{ (2\sh \pi (\alpha + \imath \beta))^{m + \imath u} \, (2\sh \pi (\alpha - \imath \beta))^{-m + \imath u}} \, d\beta,
\end{align}
where $m, k \in \mathbb{Z}$, $v \in \mathbb{R}$ and $\Im u \in (-1, -1/2]$. This integral is absolutely convergent for $\alpha \in \mathbb{R} \setminus \{0\}$, since
\begin{align} \label{G-bound}
	| \Gd(\alpha) | \leq \int_{- \frac{1}{2}}^{\frac{1}{2}} \bigl| 2 \sh \pi (\alpha 
+ \imath \beta) \bigr|^{2 \Im u} d\beta \leq \bigl| 2 \sh (\pi \alpha ) \bigr|^{2 \Im u} .
\end{align}
In Section~\ref{sec:beta-lim-calc} we use the following statement.

\begin{lemma} \label{lem:riem-sum-beta}
	Let $M, 1/\delta \in \mathbb{Z}_{>0}$. Then
	\begin{align}
		\lim_{\delta \to 0^+} \delta \sum_{\substack{ |N| \leq M/\delta,\; N \neq 0}} 
\Gd(N\delta) = \int_{-M}^M \Gd(\alpha) \, d\alpha.
	\end{align}
\end{lemma}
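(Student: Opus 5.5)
We split off a small neighbourhood of the singular point $\alpha=0$, where the improper behaviour lives, and treat the remaining range as a proper Riemann integral. Fix a small $\eta\in(0,1)$ (later $\eta\to0$) and write
\begin{align*}
\delta \sum_{\substack{|N|\le M/\delta\\ N\ne 0}} \Gd(N\delta) = \delta \sum_{\substack{1\le |N|\delta\le \eta}} \Gd(N\delta) + \delta \sum_{\substack{\eta<|N|\delta\le M}} \Gd(N\delta),
\end{align*}
with the first sum understood over $1\le |N|\le \eta/\delta$. On the compact sets $[-M,-\eta]\cup[\eta,M]$ the function $\Gd$ is continuous: the integrand in $\beta$ is continuous in $(\alpha,\beta)$ there, since $\sh\pi(\alpha\pm\imath\beta)\neq0$ for $\alpha\ne0$ and the principal-branch powers of a function on a compact set avoiding zeros are continuous up to the branch cut. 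The single-valuedness remark shows that the combined expression is in fact continuous across the cut. Hence the second sum is a genuine Riemann sum, up to boundary terms of size $O(\delta)$, and it converges to $\int_{\eta\le|\alpha|\le M}\Gd(\alpha)\,d\alpha$ as $\delta\to0^+$.

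For the first sum, the crude bound~\eqref{G-bound} $|\Gd(\alpha)|\le|2\sh\pi\alpha|^{2\Im u}\sim|\alpha|^{2\Im u}$ is too weak when $\Im u\le-1/2$, because $|\alpha|^{2\Im u}$ is then non-integrable at $0$. So we do not pull the modulus outside the $\beta$-integral so early. Instead we use $|2\sh\pi(\alpha+\imath\beta)|^2=4\sh^2\pi\alpha+4\sin^2\pi\beta\ge c(\alpha^2+\beta^2)$ for $|\alpha|\le1$, $|\beta|\le1/2$, which gives
\begin{align*}
|\Gd(\alpha)|\le C\int_{-1/2}^{1/2}(\alpha^2+\beta^2)^{\Im u}\,d\beta = C\,|\alpha|^{-1}I(|\alpha|,-\Im u).
\end{align*}
With $a=-\Im u\in[1/2,1)$, Lemma~\ref{lem:I-bound} then yields $|\Gd(\alpha)|\le C'\bigl(|\alpha|^{1-2a}+\ln(1/|\alpha|)\bigr)$, which is integrable near $0$ because $1-2a>-1$. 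This also shows that the improper integral $\int_{-M}^M\Gd$ exists.

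The same bound, applied at $\alpha=N\delta$, gives
\begin{align*}
\Bigl|\delta\sum_{1\le|N|\le\eta/\delta}\Gd(N\delta)\Bigr|\le C\delta\sum_{N=1}^{\eta/\delta}\bigl((N\delta)^{1-2a}+\ln\tfrac{1}{N\delta}\bigr).
\end{align*}
Since $x\mapsto x^{1-2a}+\ln(1/x)$ is decreasing on $(0,1]$, the sum is dominated by $\int_0^{\eta}\bigl(x^{1-2a}+\ln(1/x)\bigr)dx=:\phi(\eta)$, uniformly in $\delta$, and $\phi(\eta)\to0$ as $\eta\to0$. The integral $\int_{|\alpha|\le\eta}|\Gd|$ is bounded by $\phi(\eta)$ in the same way. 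Letting $\delta\to0^+$ and then $\eta\to0$ proves the lemma.

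The main obstacle is this uniform small-$\alpha$ estimate. One has to keep the $\beta$-integration inside, so that the singularity $(\alpha^2+\beta^2)^{\Im u}$ is averaged, rather than use~\eqref{G-bound}. One also has to check the monotonicity comparison of the sum with the integral, which is immediate since the comparison function is decreasing.
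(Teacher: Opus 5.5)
Your proof is correct and is essentially the paper's argument. You cut off a neighbourhood of the singular point $\alpha=0$, treat $[\eta,M]$ as a proper Riemann integral, and control the remaining terms uniformly in $\delta$ through $|2\sh\pi(\alpha+\imath\beta)|^2\ge c(\alpha^2+\beta^2)$ together with Lemma~\ref{lem:I-bound}, exactly as the paper does. The only cosmetic difference is that you dominate the small-$|N|$ sum by $\int_0^\eta\bigl(x^{1-2a}+\ln\frac{1}{x}\bigr)dx$ using monotonicity, which also gives absolute integrability of $\Gd$ near $0$ directly. The paper instead compares the power part with an integral and handles the logarithmic part via $n!\ge n^n/e^{n-1}$.
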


\begin{proof}
	Consider the half of this sum and integral for $N, \alpha > 0$, the proof for another half is analogous. 
By definition of the improper Riemann integral
	\begin{align}
		\int_{0}^M \Gd(\alpha) \, d\alpha= \lim_{\epsilon \to 0^+} \int_\epsilon^M \Gd(\alpha) \, d\alpha,
	\end{align}
	where the integral on the right is already a proper one, so it can be approximated in the standard way
	\begin{align}
		\int_\epsilon^M \Gd(\alpha) \, d\alpha =  \lim_{\delta \to 0^+} \Biggl( \Gd(\lceil \epsilon/\delta \rceil \delta) \, \bigl( (\lceil \epsilon/\delta \rceil + 1) \delta - \epsilon \bigr) +  \sum_{N = \lceil \epsilon/\delta \rceil + 1}^{M/\delta} \Gd(N\delta) \, \delta \Biggr).
	\end{align}
	Here we have partitioned the interval $[\epsilon, M]$ by the points
	\begin{align}
		\epsilon < (\lceil \epsilon/\delta \rceil + 1) \delta < (\lceil \epsilon/\delta \rceil + 2) \delta 
< \ldots < (M/\delta - 1) \delta < M.
	\end{align}
	First, let us show that for our function
	\begin{align}\label{f-lim}
		\lim_{\delta \to 0^+} \Gd(\lceil \epsilon/\delta \rceil \delta) \, \bigl( \lceil \epsilon/\delta \rceil  \delta - \epsilon \bigr) = 0,
	\end{align}
	which simplifies the above approximation to
	\begin{align}
		\int_\epsilon^M \Gd(\alpha) \, d\alpha =  \lim_{\delta \to 0^+} \delta \sum_{N = \lceil \epsilon/\delta \rceil }^{M/\delta} \Gd(N\delta).
	\end{align}
	Indeed, $\lceil \epsilon/\delta \rceil  \delta \geq \epsilon$, so from~\eqref{G-bound} we have
$		\bigl| \Gd(\lceil \epsilon/\delta \rceil \delta) \bigr| \leq ( 2\sh \pi \epsilon)^{2 \Im u},
$ 
	which implies the limit~\eqref{f-lim}.
	
	To prove the claim it is left to show that
	\begin{align}
		\lim_{\epsilon \to 0^+} \lim_{\delta \to 0^+} \delta \sum_{N = \lceil \epsilon/\delta \rceil }^{M/\delta} \Gd(N\delta) =  \lim_{\delta \to 0^+} \delta \sum_{N = 1}^{M/\delta} \Gd(N\delta),
	\end{align}
	or equivalently,
	\begin{align} \label{f-de-lim}
		\lim_{\epsilon \to 0^+} \lim_{\delta \to 0^+} \delta \sum_{N = 1}^{\lceil \epsilon/\delta \rceil} \Gd(N \delta) = 0.
	\end{align}
	Using the inequality
	\begin{align}
		|\sh \pi (N \delta + \imath \beta)|^2 = \sh^2(\pi N \delta) + \sin^2(\pi \beta) \geq C( (N\delta)^2 + \beta^2),
	\end{align}
	which is valid for $|\beta| \leq 1/2$, together with the bound~\eqref{G-bound} we have
	\begin{multline}
		\Biggl| \delta \sum_{N = 1}^{\lceil \epsilon/\delta \rceil} \Gd(N \delta) \Biggr| \leq C \delta \sum_{N = 1}^{\lceil \epsilon/\delta \rceil} \int_{-\frac{1}{2}}^{\frac{1}{2}} d\beta \; ( (N\delta)^2 + \beta^2)^{\Im u} \\
		\leq \sum_{N = 1}^{\lceil \epsilon/\delta \rceil} \frac{1}{N} \biggl( C_1 \, (N\delta)^{2(1 + \Im u)} + C_2 \, (N\delta) \ln \frac{1}{N\delta} \biggr),
	\end{multline}
	where on the last step we use Lemma~\ref{lem:I-bound}. Consider the first term in the brackets on 
the far right
	\begin{multline}
		\sum_{N = 1}^{\lceil \epsilon/\delta \rceil} \frac{1}{N}  (N\delta)^{2(1 + \Im u)} = \delta^{2(1 + \Im u)} \sum_{N = 1}^{\lceil \epsilon/\delta \rceil} N^{1 + 2 \Im u} \\
		\leq \delta^{2(1 + \Im u)} \int_{0}^{\lceil \epsilon/\delta \rceil} x^{1 + 2\Im u} \, dx = \frac{(\lceil \epsilon/\delta \rceil \delta)^{2(1 + \Im u)}}{2(1 + \Im u)},
	\end{multline}
	where we used the fact that $\Im u \in (-1, -1/2]$. The last expression tends to zero if we take 
the limits $\delta \to 0^+$ and then $\epsilon \to 0^+$. The remaining part
	\begin{align}
		\delta \sum_{N = 1}^{\lceil \epsilon/\delta \rceil} \ln \frac{1}{N\delta} = \delta \lceil \epsilon/\delta \rceil \ln \frac{1}{\delta} - \delta \ln \bigl( \lceil \epsilon/\delta \rceil! \bigr).
	\end{align}
	also tends to zero in the consecutive limits $\delta \to 0^+$, $\epsilon \to 0^+$ (use the bound $n! \geq n^n/e^{n - 1}$). This finishes the proof of~\eqref{f-de-lim} and, consequently, of this lemma.
\end{proof}

\subsection{Conical function}  \label{sec:riem-sum-con}

Define the integral
\begin{multline}
	\Gdc(\alpha) = \int_{-\frac{1}{2}}^{\frac{1}{2}} e^{-2\pi \imath (\alpha v + \beta k)} \, 	\bigl( 2\sh \pi (\alpha + \imath \beta) \; 2\sh \pi (\alpha - \rho + \imath \beta - \imath \sigma) \bigr)^{-m - \imath u} \\
	\times \bigl(2\sh \pi (\alpha - \imath \beta)  \; 2\sh \pi (\alpha - \rho - \imath \beta + \imath \sigma) \bigr)^{m - \imath u} \, d\beta,
\end{multline}
where $m, k \in \mathbb{Z}$, $v \in \mathbb{R}$, $\Im u \in (-1, 0)$, $\rho > 0$ and $|\sigma| \leq 1/2$. 
This integral is absolutely convergent for $\alpha \in \mathbb{R} \setminus \{0, \rho\}$ due to the bound
\begin{multline} \label{Gdc-bound}
	| \Gdc(\alpha) | \leq \int_{-\frac{1}{2}}^{\frac{1}{2}} \bigl| 2\sh \pi (\alpha + \imath \beta) \; 2\sh \pi (\alpha - \rho + \imath \beta - \imath \sigma) \bigr|^{2 \Im u} \, d\beta \\
	\leq \bigl| 2\sh (\pi \alpha) \; 2\sh \pi (\alpha - \rho) \bigr|^{2 \Im u}.
\end{multline}
In Section~\ref{sec:con-lim-calc} we use the following statement.

\begin{lemma}
	Let $M, 1/\delta \in \mathbb{Z}_{>0}$ such that $M \geq \rho + 2$. Then
	\begin{align}
		\lim_{\delta \to 0^+} \delta \Biggl( \sum_{N = -M/\delta}^{-1} + \sum_{N = 1}^{\lfloor \rho/\delta \rfloor - 1} + \sum_{N = \lfloor \rho/\delta \rfloor + 2}^{M/\delta} \Biggr) \, \Gdc(N\delta) = \int_{-M}^M \Gdc(\alpha) \, d\alpha.
	\end{align}
\end{lemma}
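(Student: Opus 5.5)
I would mimic the proof of Lemma~\ref{lem:riem-sum-beta}, now with two singular points $\alpha = 0$ and $\alpha = \rho$. Fix a small $\epsilon > 0$ with $\epsilon < \rho/2$ and split $[-M, M]$ into the regular pieces
\[
[-M, -\epsilon], \qquad [\epsilon, \rho - \epsilon], \qquad [\rho + \epsilon, M],
\]
and the two neighbourhoods $(-\epsilon, \epsilon)$ and $(\rho - \epsilon, \rho + \epsilon)$. On each regular piece $\Gdc$ is continuous, because the integrand in $\beta$ is continuous in $(\alpha, \beta)$ and bounded by~\eqref{Gdc-bound} away from $\alpha = 0, \rho$. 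Hence the proper Riemann integrals are limits of the corresponding Riemann sums with mesh $\delta$. The boundary terms coming from the partial cells at the endpoints $\pm\epsilon$, $\rho \pm \epsilon$ are $O(\delta)$ times $\sup |\Gdc|$ over the piece, which is finite by~\eqref{Gdc-bound}. So they vanish exactly as in~\eqref{f-lim}. Moreover, $\lfloor \rho/\delta \rfloor$ and $\lfloor \rho/\delta\rfloor + 1$ lie inside $(\rho - \epsilon, \rho + \epsilon)/\delta$ for small $\delta$, so the excluded indices never fall into a regular piece.

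It remains to show that the Riemann-sum contributions from the two neighbourhoods vanish in the iterated limit $\delta \to 0^+$ then $\epsilon \to 0^+$. Since $\Gdc$ is absolutely integrable near the singularities, this also gives
\[
\lim_{\epsilon \to 0^+} \int_{\text{regular pieces}} \Gdc(\alpha) \, d\alpha = \int_{-M}^M \Gdc(\alpha) \, d\alpha.
\]

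Near $\alpha = 0$, i.e. for $1 \le |N| \le \lceil \epsilon/\delta \rceil$, the second factor satisfies $|2\sh\pi(N\delta - \rho + \imath\beta - \imath\sigma)| \ge 2\sh(\pi\rho/2)$. It is therefore bounded (recall $\Im u < 0$), and the estimate reduces verbatim to the one in the proof of Lemma~\ref{lem:riem-sum-beta}. We bound $|\sh\pi(N\delta + \imath\beta)|^2 \ge C((N\delta)^2 + \beta^2)$ and apply Lemma~\ref{lem:I-bound} with $a = -\Im u \in (0,1)$. The result is the bound
\[
\sum_{N} \frac{1}{N}\Bigl(C_1 (N\delta)^{2(1 + \Im u)} + C_2 N\delta \ln\tfrac{1}{N\delta}\Bigr),
\]
and both parts tend to zero as before. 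For $\Im u \in (-1/2, 0)$ the exponent $1 + 2\Im u$ is nonnegative, so I would simply bound the sum by $\lceil\epsilon/\delta\rceil^{2(1+\Im u)}$ times a constant. The case $\Im u = -1/2$ gives a logarithm and is handled by the same computation.

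The main obstacle is the neighbourhood of $\alpha = \rho$, because $\rho/\delta$ is generally not an integer. Here I would use $1$-periodicity in $\beta$ to shift the integration window to $[\sigma - 1/2, \sigma + 1/2]$; the first factor is then bounded below by a constant. For the second factor I would use
\[
|2\sh\pi(N\delta - \rho + \imath(\beta - \sigma))|^2 \ge C\bigl((N\delta - \rho)^2 + (\beta - \sigma)^2\bigr).
\]
The summation indices $N \ne \lfloor\rho/\delta\rfloor, \lfloor\rho/\delta\rfloor + 1$ with $|N\delta - \rho| < \epsilon$ satisfy $|N\delta - \rho| \ge n\delta$ with $n = |N - \rho/\delta| \ge 1$, and each value of $\lceil n \rceil$ occurs at most twice. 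So Lemma~\ref{lem:I-bound}, applied with $\delta$ replaced by $|N\delta - \rho|$ (and using monotonicity in $\delta$ of the two bounding terms for small arguments), gives the same bound as near $0$. The sum runs over $n$ from $1$ to about $\epsilon/\delta$ and is controlled by the identical harmonic and factorial estimates. This yields vanishing as $\delta \to 0^+$ then $\epsilon \to 0^+$ and completes the proof. The restriction $M \ge \rho + 2$ only guarantees that both singular neighbourhoods lie inside $[-M, M]$.
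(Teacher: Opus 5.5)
Your proposal is correct and follows essentially the paper's route. Away from $\alpha=0,\rho$ you use proper Riemann sums, you kill the partial boundary cells via \eqref{Gdc-bound}, and near each singular point you bound the non-singular factor from below and use $1$-periodicity in $\beta$ to centre the singular one, which reduces everything to $\bigl((n\delta)^2+\beta^2\bigr)^{\Im u}$ with $n\ge 1$ and then to Lemma~\ref{lem:I-bound} as in the beta case. The only differences are cosmetic: you treat both sides of each singularity at once rather than the three sums separately, and you pass to integer $n$ by a multiplicity count, where the paper simply uses $N\delta-\rho\ge (N-\lfloor\rho/\delta\rfloor-1)\delta$ together with monotonicity of the integrand (cleaner than your appeal to monotonicity of the bounding terms).
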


\begin{proof}
	Let us show that the limit of the last sum has the form
	\begin{align} \label{Gdc-int-riem}
		\lim_{\delta \to 0^+} \delta \sum_{N = \lfloor \rho/\delta \rfloor + 2}^{M/\delta} \Gdc(N\delta)
 = \int_{\rho}^M \Gdc(\alpha) \, d\alpha.
	\end{align}
The	treatment of the other two sums is similar. The integral over $\alpha$ is improper for $\Im u \leq -1/2$, so we write
	\begin{align}
		 \int_{\rho}^M \Gdc(\alpha) \, d\alpha = \lim_{\epsilon \to 0^+} \int_{\rho + \epsilon}^M \Gdc(\alpha) \, d\alpha.
	\end{align}
	Now the integral on the right can be approximated by the Riemann sums
	\begin{multline}
		\int_{\rho + \epsilon}^M \Gdc(\alpha) \, d\alpha = \lim_{\delta \to 0^+} \biggl( \Gdc\bigl(\lceil (\rho + \epsilon)/\delta \rceil \delta \bigr) \, \bigl( \bigl[\lceil (\rho + \epsilon)/\delta \rceil + 1\bigr]\delta - (\rho + \epsilon) \bigr) \\
		+ \sum_{N = \lceil (\rho + \epsilon)/\delta \rceil + 1}^{M/\delta} \Gdc(N\delta) \, \delta \biggr).
	\end{multline}
	Here we partitioned the interval $[\rho + \epsilon, M]$ by the points
	\begin{align}
		\rho + \epsilon < (\lceil (\rho + \epsilon)/\delta \rceil + 1) \delta < (\lceil (\rho + \epsilon)/\delta \rceil + 2) \delta < \ldots < (M/\delta - 1) \delta < M.
	\end{align}
	Since $\lceil (\rho + \epsilon)/\delta \rceil \delta \geq \rho + \epsilon$, from~\eqref{Gdc-bound} we have
$ 
		\bigl| \Gdc\bigl(\lceil (\rho + \epsilon)/\delta \rceil \delta \bigr) \bigr| \leq C(\rho, \epsilon),
$ 
	so that
	\begin{align}
		\lim_{\delta \to 0^+} \Gdc\bigl(\lceil (\rho + \epsilon)/\delta \rceil \delta \bigr) \, \bigl( \lceil (\rho + \epsilon)/\delta \rceil \delta - (\rho + \epsilon)  \bigr) = 0.
	\end{align}
	This simplifies the above approximation to
	\begin{align}
		\int_{\rho + \epsilon}^M \Gdc(\alpha) \, d\alpha = \delta \sum_{N = \lceil (\rho + \epsilon)/\delta \rceil}^{M/\delta} \Gdc(N\delta)  .
	\end{align}
	Hence, as in the previous section, to prove~\eqref{Gdc-int-riem} it is only left to show that
	\begin{align}
		\lim_{\epsilon \to 0^+} \lim_{\delta \to 0^+} \delta \sum_{N = \lfloor \rho/\delta \rfloor + 2}^{\lceil (\rho + \epsilon)/\delta \rceil} \Gdc(N\delta) = 0 .
	\end{align}
	For $N \geq \lfloor \rho/\delta \rfloor + 2 > \rho/\delta$ from the estimate~\eqref{Gdc-bound} we have
	\begin{align}
		| \Gdc(N\delta) | \leq \bigl| 2\sh (\pi \rho) \bigr|^{2\Im u} \int_{-\frac{1}{2}}^{\frac{1}{2}} \bigl| 2\sh \pi (N\delta - \rho + \imath \beta - \imath \sigma) \bigr|^{2 \Im u} \, d\beta.
	\end{align}
	Furthermore, in the last estimate we can take $\sigma = 0$, since the integral of periodic function (in~$\beta$) gives the same value over any period. For $\sigma = 0$ and $|\beta| \leq 1/2$ we have
	\begin{align}
		\bigl| 2\sh \pi (N\delta - \rho + \imath \beta) \bigr|^{2} \geq C ( (N\delta - \rho)^2 + \beta^2 ) \geq C ( (N - \lfloor \rho/\delta \rfloor - 1)^2 \delta^2 + \beta^2 ).
	\end{align}
	The rest of the proof is the same as at the end of the previous section.
\end{proof}


\begin{thebibliography}{99}
	

\bibitem{BCDK} N. Belousov, L. Cherepanov, S. Derkachov, S. Khoroshkin, \textit{Calogero--Sutherland hyperbolic system and Heckman--Opdam $\mathfrak{gl}_n$ hypergeometric function}, \href{https://doi.org/10.1007/s00029-026-01148-8}{Selecta Mathematica, New Series} \textbf{32} (2026), \href{https://doi.org/10.48550/arXiv.2508.18864}{\tt [2508.18864]}.


\bibitem{BSS2} N.~M.~Belousov, G.~A.~Sarkissian, V.~P.~Spiridonov, \textit{Complex rational Ruijsenaars model. The two-particle case}, \href{https://doi.org/10.54546/NaturalSciRev.100503}{Natural Science Review} \textbf{2} (2025) 100503, \href{https://doi.org/10.48550/arXiv.2508.12449}{\tt [2508.12449]}.


\bibitem{BSS} N.~M.~Belousov, G.~A.~Sarkissian, V.~P.~Spiridonov, \textit{Complex binomial theorem and pentagon identities}, \href{https://doi.org/10.1134/S0040577926010010}{Theoretical and Mathematical Physics} \textbf{226} (2026) 1--20, \href{https://doi.org/10.48550/arXiv.2412.07562}{\tt [2412.07562]}.

\bibitem{B} F. J. van de Bult, \textit{Ruijsenaars’ hypergeometric function and the modular double of $U_q(\mathfrak{sl}(2,\mathbb{C}))$}, \href{https://doi.org/10.1016/j.aim.2005.05.023}{Advances in Mathematics} \textbf{204} (2006) 539--571, \href{https://doi.org/10.48550/arXiv.math/0501405}{\tt [math/0501405]}.

\bibitem{BRS} F. J. van de Bult, E. M. Rains, J. V. Stokman, \textit{Properties of generalized univariate hypergeometric functions}, \href{https://doi.org/10.1007/s00220-007-0289-0}{Communications in Mathematical Physics} \textbf{275} (2007) 37--95, \href{https://doi.org/10.48550/arXiv.math/0607250}{\tt [math/0607250]}.

\bibitem{DLMF} \textit{NIST Digital Library of Mathematical Functions}. \href{https://dlmf.nist.gov/}{https://dlmf.nist.gov/}, Release 1.2.4 of 2025-03-15.
    F. W. J. Olver, A. B. Olde Daalhuis, D. W. Lozier, B. I. Schneider, R. F. Boisvert, C. W. Clark, B. R. Miller, B. V. Saunders, H. S. Cohl, and M. A. McClain, eds.
    
\bibitem{DSS} S. E. Derkachov, G. A. Sarkissian, V. P. Spiridonov, \textit{Elliptic hypergeometric function and $6j$-symbols for the $SL(2,\mathbb{C})$ group}, \href{https://doi.org/10.1134/S0040577922100087}{Theoretical and Mathematical Physics} \textbf{213} (2022) 1406--1422, \href{https://doi.org/10.48550/arXiv.2111.06873}{\tt [2111.06873]}.

\bibitem{F} L. D. Faddeev, \textit{Discrete Heisenberg-Weyl Group and modular group}, \href{https://doi.org/10.1007/BF01872779}{Letters in Mathematical Physics} \textbf{34} (1995) 249--254, \href{https://doi.org/10.48550/arXiv.hep-th/9504111}{\tt [hep-th/9504111]}.

\bibitem{GR} G. Gasper, M. Rahman, \textit{Basic hypergeometric series}, Cambridge University Press (2nd ed.), 2004.

\bibitem{GGR} I. M. Gelfand, M. I. Graev, V. S. Retakh, \textit{Hypergeometric functions over an arbitrary field}, \href{https://doi.org/10.1070/RM2004v059n05ABEH000771}{Russian Mathematical Surveys} \textbf{59} (2004) 831--905.

\bibitem{HR} M. Halln\"as, S. Ruijsenaars, \textit{Joint eigenfunctions for the relativistic Calogero--Moser Hamiltonians of hyperbolic type. I. First steps}, \href{https://doi.org/10.1093/imrn/rnt076}{International Mathematics Research Notices} \textbf{2014}:16 (2014) 4400--4456, \href{https://doi.org/10.48550/arXiv.1206.3787}{\tt [1206.3787]}.

\bibitem{I} I. Ip, \textit{Representation of the quantum plane, its quantum double and harmonic analysis on $GL^+_q(2,\mathbb{R})$}, \href{https://doi.org/10.1007/s00029-012-0112-4}{Selecta Mathematica, New Series} \textbf{19} (2013) 987--1082, \href{https://doi.org/10.48550/arXiv.1108.5365}{\tt [1108.5365]}.

\bibitem{K} T. H. Koornwinder, \textit{Jacobi functions as limit cases of $q$-ultraspherical polynomials}, \href{https://doi.org/10.1016/0022-247X(90)90026-C}{Journal of Mathematical Analysis and Applications} \textbf{148} (1990) 44--54.

\bibitem{MN} V. F. Molchanov, Yu. A. Neretin, \textit{A pair of commuting hypergeometric operators on the complex plane and bispectrality}, \href{https://doi.org/10.4171/jst/349}{Journal of Spectral Theory} \textbf{11}:2 (2021) 509--586, \href{https://doi.org/10.48550/arXiv.1812.06766}{\tt [1812.06766]}.

\bibitem{N} Y. A. Neretin, \textit{Barnes-Ismagilov integrals and hypergeometric functions of the complex field}, \href{https://doi.org/10.3842/SIGMA.2020.072}{SIGMA} \textbf{16} (2020) 072, \href{https://doi.org/10.48550/arXiv.1910.10686}{\tt [1910.10686]}.

\bibitem{PT} B. Ponsot, J. Teschner, \textit{Clebsch-Gordan and Racah-Wigner coefficients for a continuous series of representations of $U_q(\mathfrak{sl}(2,\mathbb{R}))$}, \href{https://doi.org/10.1007/PL00005590}{Communications in Mathematical Physics} \textbf{224} (2001) 613--655, \href{https://doi.org/10.48550/arXiv.math/0007097}{\tt [math/0007097]}.
	
\bibitem{Ra} E. M. Rains, \textit{Limits of elliptic hypergeometric integrals}, \href{https://doi.org/10.1007/s11139-007-9055-3}{Ramanujan Journal} {\bf 18} (2009) 257--306, \href{https://doi.org/10.48550/arXiv.math/0607093}{\tt [math/0607093]}.

\bibitem{R} S. N. M. Ruijsenaars, \textit{First order analytic difference equations and integrable quantum systems},
 \href{https://doi.org/10.1063/1.531809}{Journal of Mathematical Physics} \textbf{38} (1997) 1069--1146.

\bibitem{R3} S. N. M. Ruijsenaars, \textit{A relativistic hypergeometric function}, \href{https://doi.org/10.1016/j.cam.2004.05.024}{Journal of Computational and Applied Mathematics} \textbf{178} (2005) 393--417.

\bibitem{R2} S. Ruijsenaars, \textit{A relativistic conical function and its Whittaker limits}, \href{https://doi.org/10.3842/SIGMA.2011.101}{SIGMA} \textbf{7} (2011) 101, \href{https://doi.org/10.48550/arXiv.1111.0115}{\tt [1111.0115]}.
	
\bibitem{SS} G.~A.~Sarkissian, V.~P.~Spiridonov, \textit{The endless beta integrals}, \href{https://doi.org/10.3842/SIGMA.2020.074}{SIGMA} \textbf{16} (2020) 074, \href{https://doi.org/10.48550/arXiv.2005.01059}{\tt [2005.01059]}.

\bibitem{SSh} G. Schrader, A. Shapiro, \textit{On $b$-Whittaker functions}, arXiv preprint \href{https://doi.org/10.48550/arXiv.1806.00747}{\tt [1806.00747]}.

\bibitem{Sh} T. Shintani, \textit{On a Kronecker limit formula for real quadratic fields}, \href{https://irma.math.unistra.fr/~yalkinog/Shintani.pdf}{Journal of the Faculty of Science, the University of Tokyo} \textbf{24} (1977) 167--199.

\bibitem{essays} V. P. Spiridonov, \textit{Essays on the theory of elliptic
hypergeometric functions}, \href{https://doi.org/10.1070/RM2008v063n03ABEH004533}{Russian Mathematical Surveys} {\bf 63}:3 (2008) 405--472, \href{https://doi.org/10.48550/arXiv.0805.3135}{\tt [0805.3135]}.

\end{thebibliography}
\end{document}